\documentclass[11pt, reqno]{amsart}
\usepackage{amsfonts, amsthm,amsmath, amssymb, amscd}
\allowdisplaybreaks \allowdisplaybreaks[2]
\usepackage{mathtools}
\usepackage{esint}
\usepackage{graphics}
\usepackage{indentfirst}
\usepackage{latexsym}
\usepackage[dvips]{epsfig}
\usepackage{mathrsfs}
\usepackage{hyperref}
\usepackage{color}
\date{}
\numberwithin{equation}{section}

\usepackage{graphics}
\usepackage{epsfig}
\newtheorem{theorem}{Theorem}[section]

\newtheorem{lemma}[theorem]{Lemma}
\newtheorem{proposition}[theorem]{Proposition}
\newtheorem{remark}[theorem]{Remark}

\def\bmu{\boldsymbol{u}}
\def\bmv{\boldsymbol{v}}
\def\bmd{\boldsymbol{d}}
\def\tdiv{\operatorname{div}}

\begin{document}

\title[]{Global well-posedness of radially symmetric strong solutions to two-dimensional compressible liquid crystal flows with large data and vacuum}

\author{Yu~Mei}
\address[Yu Mei]{School of Mathematics and Statistics,  Northwestern Polytechnical University,  Xi'an 710129,  China}
\email{\href{mailto:yu.mei@nwpu.edu.cn}{yu.mei@nwpu.edu.cn}}

\author{Sen Yang}
%\thanks{Corresponding Author}
\address[Sen Yang]{School of Mathematics and Statistics,  Northwestern Polytechnical University,  Xi'an 710129,  China}
\email{\href{mailto:senyang@mail.nwpu.edu.cn}{senyang@mail.nwpu.edu.cn}}

\keywords{compressible liquid crystal flows, density-dependent viscosity, global strong solutions, large time behaviors}
\subjclass[2010]{35Q35,	76A15, 76N10}
\begin{abstract}
We study the initial boundary value problem of the two-dimensional compressible nematic liquid crystal 
flow with the shear viscosity $\mu$ being a positive constant and bulk viscosity $\lambda$ being a power function of density with the power exponent $\beta$.  Under the condition $\beta>1$, we establish the global existence and large time behavior of the radially symmetric strong solutions to this Vaigant--Kazhikhov type model of simplified compressible Ericksen-Leslie system of Dirichlet boundary conditions for the velocity and Neumann boundary ones for the director with arbitrary large data and vacuum. This work improves the results of Zhong and Zhou (\textit{Math. Ann.} \textbf{390}, 2024; \textit{J. Math. Pures Appl.}, \textbf{212}, 2026) for general 2D domains by removing the geometric angel condition on the director and relaxing the constrain on $\beta$ from \(\beta > 4/3\) to \(\beta > 1\). The key ingredient is that the rigidity mechanism arising from the radial symmetry of director prevents concentration phenomena in the transported harmonic heat flow.
\end{abstract}
\maketitle
%\tableofcontents

%%%%%%%%%%%%%%%%%%%%%%%%%%%%%%%%%%%%%%%%%%%%%%%%%%%%%%%%%%%%%%%%%%%%%%%%%%%%%%%%%%%%%%%%%%%%%%%%%%
\newcommand{\lnorm}[3]{{\left\| #1 \right\|}_{L^{#2}(#3)}}
\newcommand{\hnorm}[3]{{\left\| #1 \right\|}_{H^{#2}(#3)}}
\newcommand{\slnorm}[2]{{\left\| #1 \right\|}_{L^{#2}}}
\newcommand{\sllnorm}[3]{{\left\| #1 \right\|}_{L^{#2}L^{#3}}}

\section{Introduction and main result}
Liquid crystals are states of matter exhibiting both the fluidity of liquids and the orientational order characteristic of crystalline solids. In the nematic phase, the molecules tend to align along a preferred direction, which is usually described by a unit vector field $\bmd \in \mathbb{S}^2$, called the director field. The continuum description of liquid crystals can be traced back to the static theories of Oseen and Frank  \cite{oseen1933, frank1958}. 
In 1960s,  Ericksen \cite{ericksen1961,ericksen1962} and Leslie \cite{leslie1968} developed a hydrodynamic theory incorporating the interaction between the macroscopic fluid motion and the microscopic orientation of the liquid-crystal molecules. The resulting Ericksen--Leslie theory has become a fundamental continuum model for nematic liquid-crystal flows. Since the full Ericksen--Leslie system contains the general Oseen--Frank elastic energy for the director field together with the highly nonlinear anisotropic Leslie stress, Lin and Liu \cite{lin1995}  introduced a simplified incompressible model, which adopts the one-constant approximation of the Oseen--Frank elastic energy, and retains the basic competition between viscous dissipation, transport of the director, and orientational elasticity. This simplified system has played a central role in the mathematical analysis of liquid crystal flows. 

In the present paper, we are concerned with the simplified Ericksen--Leslie system in the compressible, isentropic setting. In a bounded domain  $\Omega \subset \mathbb{R}^{2}$, it takes the form
\begin{equation}\label{CEL-sys}
	\begin{cases}
		{\rho}_{t} + \text{div}(\rho \bmu)=0 \\
		(\rho \bmu)_{t} + \text{div}(\rho \bmu \otimes \bmu) + \nabla P = \mu \Delta \bmu + \nabla((\mu + \lambda)\text{div}\bmu) - \text{div}(\nabla\bmd\odot\nabla\bmd-\frac{1}{2}|\nabla\bmd|^2\mathbb{I}_2) \\
		\bmd_{t} +\bmu \cdot \nabla \bmd =\Delta \bmd + |\nabla \bmd|^{2}\bmd,
	\end{cases}
\end{equation}
where $\rho=\rho(x, t),  \bmu=(u_1, u_2)(x, t)$ and $\bmd=(d_1, d_2, d_3)(x, t)\in\mathbb{S}^2$ represent the density,  velocity and direction field,  respectively.  The pressure $P=P(\rho)$ is given by
\begin{equation}\label{gamma-law}
	P(\rho) =\rho^{\gamma}, \quad \gamma>1.
\end{equation}
While, $\mu$ and $\lambda$ are the shear and bulk viscosity coefficients respectively.  $\nabla\bmd\odot\nabla\bmd$  stands for  a $2\times 2$ matrix whose $ij$-component is $\partial_i\bmd\cdot\partial_j\bmd$ with $i, j=1, 2$. 
%We are interested in the initial-boundary problem (IBVP) of the system \eqref{CEL-sys} being supplemented with the initial data
%\begin{equation}\label{IC}
%	(\rho,\rho\bmu,\bmd)(x, 0)=(\rho_{0}(x),  \rho_0\bmu_{0}(x),\bmd_0(x)),\quad x \in \Omega, 
%\end{equation}
%and boundary conditions
%\begin{equation}\label{BC}
%	\bmu(x, t)=0, \frac{\partial \bmd}{\partial \boldsymbol{n}}=0,\quad x \in \partial \Omega, 
%\end{equation}
%where $\boldsymbol{n}$ is the unit outward normal vector to $\partial\Omega$.
% The viscosity coefficients $\mu $ and $\lambda $ are assumed to satisfy 
%\begin{equation}\label{VK-coef}
%	\mu =\mathrm{const.}>0,\quad \lambda(\rho)=\rho^{\beta}, 
%\end{equation}
%where $\beta >1$.  

The simplified compressbile Ericksen–Leslie system  \eqref{CEL-sys} couples the compressible isentropic Navier--Stokes equations with a transported harmonic-map heat flow into $\mathbb{S}^2$. In fact, when $\bmd$ is a constant vector, it reduces to the compressible Navier-Stokes equations, while the harmonic heat flow in the absence of fluid motion. From a mathematical point of view, this coupling creates difficulties that are substantially more delicate than those appearing in either subsystem separately. The momentum equation in \eqref{CEL-sys}, beside the possible degeneracy near vacuum,  contains a quadratic nonlinear term of $\nabla\bmd$ arising from the elasticity. Moreover, the  transport of director leads to the lack of monotonic type inequalities for the classical harmonic-map heat flow so that concentration of the director gradient may appear. Consequently, many existing global results impose a hemisphere or angle condition on $\bmd_0$ in order to recover a rigidity property for the director field. The presence of boundaries introduces further obstacles, since the effective viscous flux and singular integral arguments available on the torus or in the whole space no longer apply directly.

For the fluid part of  the coupled system \eqref{CEL-sys}, that is compressible isentropic Navier--Stokes equations, there is a huge literature concerning  the well-posedness of strong and weak solutions. When both viscosity coefficients $\mu,\lambda$ are positive constants, Lions, in his pioneering works \cite{Lions1993, lions1998}, established the global existence of finite-energy weak solutions with large initial data under the condition $\gamma \geq 9/5$. Later,  Feireisl,  Novotný and Petzeltová \cite{novotny2004book} subsequently developed a refined compactness theory reducing the restriction to $\gamma \geq 3/2$, 
while Jiang and Zhang \cite{JiangZhang2001} proved global existence of spherically symmetric weak solutions for all $\gamma > 1$.  As to global classical or strong solutions, Matsumura and Nishida\cite{Matsumura1980} established global well-posedness in 
the Sobolev space $H^s$ for initial perturbations near nonvacuum equilibrium states. See also Hoff \cite{Hoff.1995a,Hoff.1995c} the studies for discontinuous initial data.
A  major breakthrough concerning vacuum was made by Huang,  Li and Xin \cite{HuangLiXin2012}, who obtained global classical solutions in three dimensions  for initial data with sufficiently small total energy but possibly large oscillations and vacuum. Later,  Li and Xin  studied the two-dimensional Cauchy problem in \cite{LiXin2019}, and obtained  global well-posedness and large time behaviors of strong solutions when the 
initial total energy is suitably small.

When the viscosity coefficients $\mu,\lambda$ depend on the density, the structure of compressible isentropic Navier--Stokes equations changes significantly. One important class is formed by viscosity coefficients satisfying the B-D entropy relation $\lambda(\rho)=2(\rho\mu'(\rho)-\mu(\rho))$ introduced in \cite{Bresc.D2006a}, which offers the extra key estimates $\mu'(\rho)\sqrt{\rho}$.  Utilizing this estimate and Mellet-Vasseur type one, global weak solutions were obtained by  Li and Xin \cite{LiXin2015}, and independently by  Vasseur 
and Yu \cite{VasseurChenYu2016}, which were subsequently extended to more general nonlinear viscosity laws by
Bresch, Vasseur and Yu \cite{BreschVasseurChenYu2022}. However, the regularities and uniquness of such global weak solutions generally remain open in multi-dimensional domains. We refer \cite{Cao.L.Z2022,Cao.L.Z2024,Wen.Z2025} for global regular solutions with large data  and vacuum in one-dimensional or spherically symmetric cases. In the case that $\mu=\alpha\rho^\delta, \lambda=\beta\rho^\delta$, Xin and Zhu \cite{Xin.Z2021} recently established global well-posedness of regular solutions with small density but possibly large velocities provided $\delta>2\gamma-1$. Very recently, Huang, Li and Zhang \cite{Huang.L.Z2026a}, which extended Yu's result\cite{Yu.2023}, obtained the global existence of strong solutions with sufficient large initial density provided $\delta>\max\{\frac{\gamma+1}{2},\gamma-1\}$. Another important class of density-dependent viscosity model is the two-dimensional Vaigant--Kazhikhov model with the shear viscosity $\mu$ being a positive constant and the bulk one $\lambda=\rho^\beta$ being a power function of density, 
which provides one of the few mechanisms known to produce global classical solutions with arbitrarily large data for multi-dimensional problems.  
Vaigant and Kazhikhov \cite{VaigantKazhikhov1995} initiated this theory by obtaining global existence of strong solutions for large non-vacuum initial data in two space dimensional torus under the constraint $\beta >3$. The essential feature of their argument is that the growth of the bulk viscosity with the density can be exploited, together with the effective viscous flux, to prevent unbounded growth of the density. This observation inspired the works \cite{JiuQWangYXinZP2013,Jiu.W.X2014a, JiuQWangYXinZP2018,HuangXDLiJ2016, HuangXDLiJ2022,FanLiLi2022,HuangSu2024} aimed at admitting vacuum, extending the result to two-dimensional Cauchy or initial-boundary value problems, and, especially, decreasing the lower bound imposed on $\beta$. In particular, the improvement of the bulk viscosity exponent to $\beta>\frac{4}{3}$ was achieved by Huang and Li\cite{HuangXDLiJ2016, HuangXDLiJ2022} for the periodic domain and Cauchy problems, and by Fan, Li and Li \cite{FanLiLi2022} for IBVP with Navier-slip boundary conditions for the velocity. More recently, Huang, Su, Yan and Yu \cite{HuangSu2024} discovered that radial symmetry permits a further improvement from $\beta>\frac{4}{3}$ to $\beta>1$ even for IBVP in a two-dimensional solid ball with the Dirichlet condition, which shows that the exponent $\beta>4/3$, although fundamental in the available general two-dimensional arguments, is not an intrinsic threshold once additional geometric structure is exploited. However, whether this radial symmetric assumption can be removed in the case $\beta>1$ is still a challenge open problem even for a two-dimensional periodic domain.

For the original coupled system \eqref{CEL-sys} with constant viscosity coefficients, global existence of finite-energy weak solutions in a two or three dimensional domain were obtained by Jiang, Jiang and Wang \cite{jiangFjiangS2013, jiangFjiangS2014} under suitable restrictions on the initial energy or the geometric angular condition on the initial director field $\bmd_0$. Later, Lin, Lai and Wang \cite{LinJLaiB2015} imposed the hemisphere condition $\bmd_0(x)\in\mathbb S^2_+$ and extended the result of two-dimensional Cauchy problem  in \cite{jiangFjiangS2014} to three-dimensional IBVP. The hemisphere or angle conditions used in these works provide a rigidity mechanism that prevents concentration phenomena in the transported harmonic-map equation and permits strong convergence of the director gradients. Nevertheless,  removing such geometric constraints remains open even for the incompressible model in 
$\mathbb{R}^3 $. As to global strong and classical solutions, when initial data are close to the equilibrium state
$(1,  0,  \bar{\bmd})$ with a constant vector $\bar{\bmd} \in \mathbb{S}^2$, 
Hu and Wu \cite{Hu_2013} established global well-posedness of three-dimensional Cauchy problem in critical Besov spaces. Gao, Tao and Yao \cite{gaotao2016} obtained global existence and decay of classical solutions in $H^s(s\geq 3)$. We also refer to \cite{shibata2015} for global strong solutions in $L^p$ framework. These perturbations around non-vacuum state prevent the appearance of vacuum. For initial data allowing vacuum, Li, Xu and Zhang  \cite{LiJXuZ2018}  obtained global classical solutions to three dimensional Cauchy problem for initial data with sufficiently small total energy but possibly large oscillations and vacuum. Global strong solutions under small-energy hypotheses were subsequently obtained for other dimensions and boundary conditions. We refer to \cite{WangT2016} for two-dimensional Cauchy problem, \cite{LiuYZhongX2021} and \cite{SunYZhongX2023} for IBVP with Navier slip boundary conditions for velocity and Neumann boundary condition for orientation field. However, the results in  \cite{LiJXuZ2018, WangT2016, LiuYZhongX2021,SunYZhongX2023} rely essentially on the smallness of energy. For large initial data, Zhong and Zhou \cite{Zhongxin2024} investigated the Vaigant--Kazhikhov model for two-dimensional compressible Ericksen--Leslie system, and established the global
existence and uniqueness of strong solutions to the Cauchy problem with vacuum, provided that
$\beta > \frac{4}{3}$ and the initial director satisfies a geometric angle condition.  More recently, this result was further extended by Zhong and Zhou \cite{Zhongxin2026}to IBVP in a two-dimensional bounded domain with Navier-slip boundary conditions for the velocity and Neumann boundary conditions for the director. We also refer to \cite{Li.M.Z2025} for a large data result of compressible Ericksen-Leslie system in the case that $\mu=\alpha\rho^\delta, \lambda=\beta\rho^\delta$.

It is worth mentioning that, in the result of Huang, Su, Yan and Yu \cite{HuangSu2024} for the compressible Navier--Stokes equations,  the radial symmetry can lower the restriction on the bulk-viscosity exponent to $\beta>1$.  It is therefore natural to ask whether radial symmetry can be exploited to obtain the sharper Vaigant--Kazhikhov range $\beta>1$ for simplified compressible Ericksen-Leslie system, compared with the general two-dimensional results obtained by Zhong and Zhou \cite{Zhongxin2024, Zhongxin2026} requiring $\beta>4/3$. 
This radially symmetric problem was firstly studied by Huang and Wang \cite{Wang.H2018c} in the constant viscosity coefficients case. It is shown that the possible singularity occurs around the center with either the concentration or vanishing of density
or the norm inflammation of gradient of orientation field. It is interesting that whether this blow up phenomena can occur in the Vagiant-Kazhikhov model case.
 On the other hand, in \cite{Zhongxin2024, Zhongxin2026}, the geometric angel condition on the initial director field plays an essential role for general 2D problems. It is also interesting that whether the radial symmetry can be exploited to remove this angle condition. The answer is by no means an immediate consequence of the results in \cite{HuangSu2024,Zhongxin2024, Zhongxin2026}. In fact, the Ericksen stress, which is quadratic in $\nabla d$, appears in the estimates used to control the effective viscous flux and the density. Conversely, estimates for the director equation involve the velocity and its derivatives, so that these estimates are coupled with each other. Once the geometric angle condition is removed, one can no longer invoke the usual rigidity mechanism to control the harmonic-map nonlinearity and decouple the estimates so that a different idea base on the radial structure is required. 

Motivated by these observations, we investigate the initial-boundary value problem in a disk $\Omega$ of radius $R$ centered at the origin for the Vaigant--Kazhikhov model of two-dimensional radially symmetric simplified compressible Ericksen--Leslie system \eqref{CEL-sys}-\eqref{gamma-law}  with the viscosities satisfying
\begin{equation}\label{VK-coef}
	\mu =\mathrm{const.}>0,\quad \lambda(\rho)=\rho^{\beta},
\end{equation}
and initial-boundary conditions
\begin{equation}\label{IC}
	(\rho,\rho\bmu,\bmd)(x, 0)=(\rho_{0}(x),  \rho_0\bmu_{0}(x),\bmd_0(x)),\quad x \in \Omega, 
\end{equation}
and 
\begin{equation}\label{BC}
	\bmu(x, t)=0, \frac{\partial \bmd}{\partial \boldsymbol{n}}=0,\quad x \in \partial \Omega, 
\end{equation}
where $\boldsymbol{n}$ is the unit outward normal vector to $\partial\Omega$.

Our goal is to establish global existence, uniqueness and large time behavior of radially symmetric strong solutions 
\begin{equation}\label{symmetric assumption}
	\rho(x, t)=\rho(r, t), ~~\bmu(x, t)=u(r, t)\frac{x}{r}, ~~\bmd(x, t)=\bmd(r, t), \text{  with  }|x|=r
\end{equation}
for arbitrarily large initial data allowing vacuum under the natural condition
	\begin{equation*}
	\beta >1, \gamma >1.
\end{equation*} 
Under the radially symmetric assumption, the problem \eqref{CEL-sys}-\eqref{BC} reduce to the following system
\begin{equation}\label{sCEL-sys}
	\begin{cases}
		{\rho}_{t} + (\rho u)_r + \frac{\rho u}{r}=0 \\
		(\rho u)_t + (\rho u^2 )_r +\frac{\rho u^2}{r} + (\rho^{\gamma})_r=[(2\mu+\rho^{\beta})(u_r+\frac{u}{r})]_r-\frac{1}{2}({|\bmd_r|^2})_r-\frac{1}{r}{|\bmd_r|}^2\\
		\bmd_{t} +u \bmd_r =\bmd_{rr}+ {|\bmd_r|}^2\bmd+\frac{1}{r}\bmd_r
	\end{cases}
\end{equation}
with the initial data 
\begin{equation}\label{IC2}
	\rho (r, 0)=\rho_{0}(r),~~\rho u(r, 0)=\rho_0u_{0}(r),~~\bmd(r, 0)=\bmd_0(r), \quad 0 \le r < R , 
\end{equation}
and boundary conditions 
\begin{equation}\label{BC2}
	u(0, t)=u(R, t)=0,~~\bmd_r(0, t)=\bmd_r(R, t)=0, \quad t>0.  
\end{equation}
Our main results are stated as follows.
\begin{theorem}\label{main-thm}
	Assume that 
	\begin{equation}\label{TH1. 1-1}
		\beta >1, \gamma >1, 
	\end{equation} 
	and the initial data $(\rho_0, \bmu_0, \bmd_0)$ satisfy that for some $q>2$, 
	\begin{equation}\label{TH1. 1-2}
		\rho_0 \in W^{1, q}(\Omega), \bmu_0 \in H^1(\Omega), \nabla \bmd_0 \in H^1(\Omega).
	\end{equation}
Then the system \eqref{sCEL-sys}-\eqref{BC2} has a unique global strong solution  satisfying
	\begin{equation}\label{TH1. 1-4}
		\begin{cases}
			\rho \in C([0, T];W^{1, q}(\Omega)), \rho_t\in L^\infty(0,T;L^2),\\
			(\bmu,\nabla\bmd)\in L^{\infty}(0, T;H^1(\Omega))\cap L^{\frac{q+1}{q}}(0,T;W^{2,q})\\
			(\sqrt{t}\bmu,\sqrt{t}\nabla\bmd)\in L^2(0,T; W^{2,q}),\\ (\sqrt{t}\nabla^2 \bmu, \sqrt{t}\nabla^3 \bmd) \in L^{\infty}(0, T;L^2(\Omega)),\\
			 \sqrt{t}\bmu_t\in L^2(0, T;H^1(\Omega)),\sqrt{t}\nabla\bmd_t \in L^{\infty}(0, T;L^2(\Omega))\cap L^2(0, T;H^1(\Omega)),\\
			\rho\bmu\in C([0,T];L^2), \sqrt{\rho}\bmu_t\in L^2(\Omega\times(0,T)) 
		\end{cases}
	\end{equation}
	for any $T>0$. Moreover, there is a positive constant $C $ depending only on $\mu, \beta, \gamma$, $\|\rho_{0}\|_{W^{1,q}}$, $\|\bmu_0\|_{H^1}$, and $\|\nabla\bmd_0\|_{H^1}$ such that 
	\begin{equation}\label{thn12res}
		\sup_{0\le t <\infty} \slnorm{\rho}{\infty} \le  C,
	\end{equation}
	and the following asymptotic behaviors of solution hold
	\begin{equation}\label{large-time behavior}
		\lim_{t \rightarrow{\infty}} (\slnorm{\rho-\bar{\rho}_0}{p}+\slnorm{\nabla \bmu}{p} +\slnorm{\nabla^2\bmd}{p})=0, 
	\end{equation}
	for any $p \in [1, +\infty)$, where
	\begin{equation*}
		\bar{\rho}_0=\frac{1}{|\Omega|}\int \rho_0 dx. 
	\end{equation*}
\end{theorem}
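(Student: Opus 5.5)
The proof follows the standard continuation scheme. Local existence and uniqueness of strong solutions with vacuum hold for \eqref{sCEL-sys}--\eqref{BC2}, for instance by linearization and the compatibility-free approach of Huang--Li, with the radial structure preserved by uniqueness. It therefore suffices to prove a priori bounds that depend only on $T$ and the data, and in fact uniformly in $T$ for $\|\rho\|_{L^\infty}$. The first step is the basic energy identity
\[
\frac{d}{dt}\int\Big(\tfrac12\rho u^2+\tfrac{\rho^\gamma}{\gamma-1}+\tfrac12|\bmd_r|^2\Big)+\int\Big(\mu|\nabla\bmu|^2+(\mu+\rho^\beta)(\tdiv\bmu)^2+|\Delta\bmd+|\nabla\bmd|^2\bmd|^2\Big)=0 .
\]
It gives $\nabla\bmd\in L^\infty L^2$, $\nabla\bmu\in L^2L^2$ and $\Delta\bmd+|\nabla\bmd|^2\bmd\in L^2L^2$.

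The key new step, replacing the angle condition, is a pointwise bound on $\bmd_r$ away from the origin, together with a no-concentration estimate at the origin. In radial coordinates $\int_0^R|\bmd_r|^2r\,dr\le C$. On $[r_0,R]$ the one-dimensional embedding $|\bmd(r)-\bmd(s)|\le C|\log(r/s)|^{1/2}$ already controls oscillation. To rule out energy concentration at $r=0$, I would use the harmonic-map structure. A radial map $\bmd(r)$ has the two-dimensional energy density $|\bmd_r|^2$ with no angular part, so the Pohozaev/equivariant obstruction means no nonconstant radial harmonic sphere exists; the only harmonic maps $\mathbb R^2\to\mathbb S^2$ that could bubble are not radial functions $\bmd(|x|)$. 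Concretely, I would derive a local energy inequality for $\int_{B_\delta}|\bmd_r|^2\phi$ and show smallness uniformly in time using the tension bound $\tau=\Delta\bmd+|\nabla\bmd|^2\bmd\in L^2L^2$ and $|\bmd|=1$. This yields $\|\nabla\bmd\|_{L^4}^4\le C\|\nabla\bmd\|_{L^2}^2\|\tau\|_{L^2}^2+\dots$, i.e.\ a Ladyzhenskaya-type estimate with a small constant, which decouples the director from the fluid. I would then get $\nabla\bmd\in L^\infty H^1$ and $\nabla^2\bmd\in L^2H^1$, modulo velocity terms $\|u\bmd_r\|$.

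The third step treats the fluid by following Huang--Su--Yan--Yu. Write the momentum equation with the effective viscous flux
\[
F=(2\mu+\rho^\beta)\Big(u_r+\tfrac ur\Big)-\rho^\gamma-\tfrac12|\bmd_r|^2 ,
\]
and note that in the radial case $F_r=\rho\dot u+\frac{1}{r}|\bmd_r|^2$ (after rearranging the $-\frac12(|\bmd_r|^2)_r-\frac1r|\bmd_r|^2$ terms). With $\theta(\rho)=2\mu\log\rho+\beta^{-1}\rho^\beta$ the continuity equation gives $D_t\theta(\rho)=-\rho^\gamma-F-\frac12|\bmd_r|^2$. The radial identity $F(r)=-\int_r^R(\rho\dot u+\frac1s|\bmd_s|^2)\,ds+\bar F(t)$ lets one estimate $F$ through a Lagrangian integral of $\rho u$. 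This is the mechanism that lowers $\beta$ to $>1$, since in 1D-type variables $\int_r^R\rho\dot u\,ds=D_t\int_r^R\rho u\,ds+\dots$. The extra Ericksen term $\int_r^R s^{-1}|\bmd_s|^2ds$ is nonnegative, which gives a favorable sign for an upper bound on $\rho$, and it is controlled in $L^1_t$ by the second-step estimates (using $\bmd_r(0)=0$ and $\bmd_r\in L^2H^1$, so $|\bmd_r|\lesssim r$ near $0$ in an averaged sense). I would then run the Huang--Su--Yan--Yu upper-density bound: first $\rho\in L^\infty L^p$ for all $p$, then $\|\rho\|_{L^\infty}\le C$ via Zlotnik's lemma applied along particle paths, making the constants $T$-independent. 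With $\rho$ bounded, standard Hoff-type estimates ($\sqrt t\dot u$, $\sqrt t\nabla\bmd_t$) and then $\nabla\rho\in L^q$ via Beale--Kato--Majda/$\|\nabla u\|_{L^\infty}\in L^1_t$ close the higher regularity in \eqref{TH1. 1-4}. Uniqueness follows from a Lagrangian energy comparison in the style of Hoff.

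Finally, for the large-time behavior I would use the $T$-independent bounds, $\int_0^\infty(\|\nabla u\|_{L^2}^2+\|\tau\|_{L^2}^2+\|\rho^\gamma-\overline{\rho^\gamma}\|_{L^2}^2)dt<\infty$ (the pressure term coming from testing by the Bogovskii operator), plus the time derivatives being bounded, to get $L^2$ decay. Interpolating with the uniform bounds then gives \eqref{large-time behavior}. The main obstacle is the second step: proving uniform-in-time non-concentration of $|\bmd_r|^2$ at the origin for the transported flow without the angle condition. If the rigidity argument via the absence of radial bubbles does not close directly, I would fall back on a blow-up/contradiction argument that rescales near a concentration time and shows that the limit is a radial finite-energy harmonic map, hence constant.
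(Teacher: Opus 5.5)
Your overall architecture (energy, director control, effective flux with $\theta(\rho)$ and Zlotnik, Hoff-type higher-order bounds, decay) matches the paper's. The gap is in your second step, which is the crux. There you leave the control of $\nabla\bmd$ without the angle condition as an unexecuted plan (local energy inequality, absence of radial bubbles, a blow-up fallback), and you expect $\nabla^2\bmd\in L^2H^1$ only ``modulo velocity terms''. The missing idea is that radial symmetry gives an exact identity. It turns the energy dissipation directly into second-order control, with no concentration analysis and no velocity terms. Use $\Delta\bmd=\bmd_{rr}+\bmd_r/r$, $\bmd\cdot\bmd_{rr}=-|\bmd_r|^2$ (from $|\bmd|=1$), and $\int_\Omega\frac{2}{r}\bmd_{rr}\cdot\bmd_r\,dx=2\pi\big[|\bmd_r|^2\big]_0^R=0$ by \eqref{BC2}. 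Then
\[
\big\|\Delta\bmd+|\nabla\bmd|^2\bmd\big\|_{L^2}^2=\int_\Omega\Big(|\bmd_{rr}|^2-|\bmd_r|^4+\frac{|\bmd_r|^2}{r^2}\Big)dx,\qquad |\bmd_r|^4\le|\bmd_{rr}|^2 .
\]
The left side is exactly the dissipation in the basic energy identity; it equals $|\bmd_t+\bmu\cdot\nabla\bmd|^2$, so the transport is already absorbed. Hence $\int_0^T\!\int_\Omega r^{-2}|\bmd_r|^2\,dx\,dt\le C$, with $C$ depending only on the initial energy. Combined with $\|\bmd_r\|_{L^\infty}^2\le 2\int_0^R|\bmd_r||\bmd_{rr}|\,dr$, this gives $\int_0^T(\|\nabla^2\bmd\|_{L^2}^2+\|\nabla\bmd\|_{L^\infty}^2)\,dt\le C$. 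Gronwall then gives $\sup_t\|\nabla\bmd\|_{L^4}\le C$, all independent of $T$. In fact, writing $|\bmd_r|^2=2\int_0^r\bmd_r\cdot(\bmd_{rr}+|\bmd_r|^2\bmd)\,ds$ shows $\|\nabla\bmd\|_{L^\infty}^2\le\frac{1}{2\pi}\|\Delta\bmd+|\nabla\bmd|^2\bmd\|_{L^2}^2$. So the $L^4$ inequality you were aiming for does hold, but it comes from this identity, not from non-concentration.

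Your proposed route would not deliver these bounds in the form needed. A contradiction/blow-up argument is non-quantitative. Even granting a uniform small-energy radius $\delta$, the Struwe-type interpolation yields only $\int_0^T\|\nabla^2\bmd\|_{L^2}^2dt\le C+C\delta^{-2}T$, because $\int_0^T\|\nabla\bmd\|_{L^2}^2dt$ is not controlled by the energy. Yet $\int_0^T(\|\nabla^2\bmd\|_{L^2}^2+\|\nabla\bmd\|_{L^\infty}^2)dt$ sits in the Gronwall exponent for $A(t)$. That exponent feeds $\log(e+A^2)\le CR_T^{1+\alpha\beta}$, then $\|\xi\|_{L^\infty}\le CR_T^{1+\alpha\beta}$, and finally $R_T^\beta\le CR_T^{1+\alpha\beta}+CR_T^{(\beta-\gamma)_+}$. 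Any growth in $T$ there destroys the uniform bound \eqref{thn12res}. It also destroys the decay, which uses $\int_1^\infty(\|\nabla^2\bmd\|_{L^2}^2+\|\nabla\bmd\|_{L^\infty}^2)dt<\infty$. Obtaining director bounds ``modulo velocity terms'' is also circular, since the velocity bounds require the density bound.

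Note also a sign error in your third step. You have $D_t\theta=-\rho^\gamma-F-\frac12|\bmd_r|^2$ and $F(r)=F(R)-\int_r^R(\rho\dot u+s^{-1}|\bmd_s|^2)ds$. So the term $\int_r^R s^{-1}|\bmd_s|^2ds$ enters $D_t\theta$ with a plus sign. It is \emph{unfavorable} for an upper bound on $\rho$ and must be estimated. The paper bounds its time integral by $\frac{1}{2\pi}\int_0^T\!\int_\Omega r^{-2}|\bmd_r|^2dx\,dt\le C$, which is exactly the Hardy quantity the identity above provides.
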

\begin{remark}
	Theorem \ref{main-thm} improves the general two-dimensional results due to Zhong and Zhou \cite{Zhongxin2024, Zhongxin2026} in the sense that we removed the geometric angel condition of $\bmd$ and relaxed the admissible range of $\beta$ to $\beta>1$ under the radially symmetric assumption. Moreover, the time-independent bound of density and large time behavior are the firstly obtained for the Vaigiant-Kazhikhov model of compressible liquid crystal flows.
\end{remark}
\begin{remark}
	Theorem \ref{main-thm} generalizes the result of Huang, Su, Yan and Yu \cite{HuangSu2024} for compressible Navier-Stokes equations to simplified compressible Ericksen-Leslie system, and removed the restrictions $\gamma\geq\beta>\max\left\{1,\frac{\gamma+2}{4}\right\}$ on the uniform bound of density and large time behaviors. 
\end{remark}
\begin{remark}
	Theorem \ref{main-thm} excludes the possible center singularity for constant viscosity coefficients model studied in Huang and Wang \cite{Wang.H2018c}.
\end{remark}
\begin{remark}
	In Theorem \ref{main-thm}, we do not need any compatibility conditions on initial data. It is easy to  check that, similar to \cite{Zhongxin2024, Zhongxin2026}, if the initial data $(\rho_0,\bmu_0,\bmd_0)$ satisfy for some $q>2$ 
	$$\rho_0\in W^{2,q}(\Omega), \bmu_0 \in H^2(\Omega), \nabla \bmd_0 \in H^2(\Omega),$$
	and the 
	the following compatibility condition 
		\begin{equation*}
				-\mu \Delta \bmu_0-\nabla((\lambda + \mu) \mathrm{div} \bmu_0) +\nabla P_0 + \mathrm{div}(\nabla\bmd_0\odot\nabla\bmd_0-\frac{1}{2}|\nabla\bmd_0|^2\mathbb{I}_2) = \sqrt{\rho_0}g  
			\end{equation*}
		for some $g \in L^2$, then the strong solution can be improved to a classical one for any time.
\end{remark}
%The main ideas used in the proof of Theorem \ref{main-thm} are as follows. Similar to \cite{HuangSu2024,Zhongxin2024,Zhongxin2026}, the key issue is to obtain the a priori upper bound of the density. 

\section{Preliminaries}
In this section,  we recall some known facts and inequalities which will be used later. 

First, we can slightly modify the argument in \cite{HuangTWangC2012, Zhongxin2026} to obtain the following local existence of strong solutions to \eqref{CEL-sys}-\eqref{BC}. 
\begin{lemma}\label{lwp}
	Assume that $\beta \geq 1,\gamma > 1$ and  $(\rho_0, \bmu_0, \bmd_0)$ satisfy 
	$$\rho_0\geq\delta, \rho_0\in H^2, \bmu_0\in H^2,\bmd_0\in H^2, \bmu|_{\partial\Omega}=\frac{\partial\bmd}{\partial\boldsymbol{n}}|_{\partial \Omega}=0.$$
	Then there are a small time $T > 0$ and a constant $C_0 > 0$ depending only on $ \Omega, \gamma, \beta, \mu, \lambda$ and initial data such that there exists a unique strong solution $ (\rho, \bmu, \bmd) $ 
	to the problem \eqref{CEL-sys}-\eqref{BC} in $ \Omega \times (0, T) $ satisfying \eqref{TH1. 1-4} and away from vacuum.
\end{lemma}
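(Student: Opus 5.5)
This local well-posedness result does not depend on large-data structure. I would prove it by the standard linearization and fixed-point scheme used for compressible Navier--Stokes and liquid crystal flows with positive initial density, as in \cite{HuangTWangC2012, Zhongxin2026}. The bulk viscosity $\lambda(\rho)=\rho^\beta$ enters only as a smooth function of $\rho$ on a range $[\delta/2, 2\|\rho_0\|_{L^\infty}]$, and there it behaves like a variable, bounded, positive coefficient.

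\textbf{Iteration scheme.} Fix $T_*$ small and a closed convex set $\mathcal{K}$ of triples $(\bmv,\bmd^\ast)$. These satisfy $\bmv\in L^\infty(0,T_*;H^2)\cap L^2(0,T_*;H^3)$ and $\bmv_t\in L^2(0,T_*;H^1)$, and $\bmd^\ast$ satisfies the analogous bounds one derivative higher. All norms are bounded by constants $M_i$ chosen in terms of the data. Given such a pair, I solve three problems in turn.

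First, the linear transport equation $\rho_t+\bmv\cdot\nabla\rho+\rho\,\tdiv\bmv=0$. Along characteristics this gives
\[
\rho\ge\delta\exp\Bigl(-\int_0^t\|\tdiv\bmv\|_{L^\infty}\Bigr)\ge \delta/2
\]
for $T_*$ small, together with $\rho\in C([0,T_*];H^2)$.

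Second, the linear parabolic system
\[
\bmd_t-\Delta\bmd=-\bmv\cdot\nabla\bmd^\ast+|\nabla\bmd^\ast|^2\bmd^\ast,
\]
with Neumann condition.

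Third, the linear Lam\'e-type system
\[
\rho\bmu_t-\mu\Delta\bmu-\nabla((\mu+\lambda(\rho))\tdiv\bmu)=-\rho\bmv\cdot\nabla\bmv-\nabla P(\rho)-\tdiv(\nabla\bmd\odot\nabla\bmd-\tfrac12|\nabla\bmd|^2\mathbb I_2),
\]
with Dirichlet condition. This is uniformly parabolic because $\rho\ge\delta/2$ and $\mu+\lambda(\rho)\ge\mu$. Its $H^2/H^3$ regularity follows from the elliptic theory for the Lam\'e operator with $W^{1,\infty}\cap H^2$ coefficients on the smooth disk.

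\textbf{Energy estimates.} I derive energy estimates for each linear problem by differentiating in time and applying elliptic regularity. These show that the map $(\bmv,\bmd^\ast)\mapsto(\bmu,\bmd)$ sends $\mathcal K$ into itself once $M_i$ are chosen large and then $T_*$ small. The time-integral terms carry a factor of $T_*^{1/2}$ or $T_*$, and that factor absorbs the nonlinear dependence on the $M_i$.

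\textbf{Contraction.} I show the map is a contraction in the lower norm $L^\infty(0,T_*;L^2)$ for $\rho$ and for $\sqrt\rho\,\bmu$, together with $L^2(0,T_*;H^1)$ for $\bmu$ and $L^\infty H^1\cap L^2H^2$ for $\bmd$. Taking differences, the key terms are $(\lambda(\rho_1)-\lambda(\rho_2))\tdiv\bmu$ and the stress difference. Both are controlled by the bounded higher norms and the mean value theorem. The fixed point is then a strong solution, and the bounds pass to the limit by weak compactness.

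\textbf{Remaining properties.} Uniqueness follows from the same difference estimate combined with Gronwall. The constraint $|\bmd|=1$ is preserved because $w=|\bmd|^2-1$ satisfies
\[
w_t+\bmu\cdot\nabla w-\Delta w=2|\nabla\bmd|^2w,
\]
with Neumann data and $w(0)=0$. Radial symmetry is preserved by uniqueness, since rotations map solutions to solutions. The regularity class \eqref{TH1. 1-4} then follows from the higher bounds on $[0,T]$.

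The main obstacle is closing the estimates for the momentum equation. There $\nabla\lambda(\rho)\tdiv\bmu$ appears, and the elliptic regularity requires $\nabla\rho\in L^\infty(0,T;H^1)$, while the transport estimate for $\nabla^2\rho$ needs $\nabla^2\tdiv\bmv\in L^1(0,T;L^2)$. I would handle this circularity by working with $\bmv\in L^2H^3$ and the parabolic smoothing bound $\|\bmu\|_{L^2H^3}\le C(\|\bmu_t\|_{L^2H^1}+\dots)$, accepting that $T_*$ depends on $\delta$.
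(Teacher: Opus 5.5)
Your proposal is correct and is essentially the paper's route: the paper gives no proof of its own, only the remark that the standard linearization and fixed-point local theory of \cite{HuangTWangC2012, Zhongxin2026} carries over with slight modifications, and your scheme is that argument, with $\lambda(\rho)=\rho^\beta$ acting as a smooth, positive, bounded coefficient once $\rho\ge\delta/2$. Two small points: putting $\bmd$ one derivative above $\bmu$ tacitly requires $\nabla\bmd_0\in H^2$ rather than the literally stated $\bmd_0\in H^2$, which is harmless because the lemma is only applied to mollified data and $T^*$ is defined through $\|\nabla\bmd^\delta\|_{H^2}$; and to rule out a swirl component $v(r)x^\perp/r$ by uniqueness you must use the system's invariance under reflections as well as rotations.
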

Next, the following Gagliardo-Nirenberg and Sobolev inequalities (see \cite{Nirenberg1959}) will be used frequently. 
\begin{lemma}
	Let $\Omega$ be a bounded Lipschitz domain in $\mathbb{R}^2$. Then,  for any $ q>2$, there are positive constants $C_1, C_2$ depending on $q$ and $\Omega$, such that for any functions $\bmu,\bmv:\Omega\rightarrow\mathbb{R}^2$ with $\bmu\in H^1(\Omega)$ and $\bmv\in W^{1,q}(\Omega)$,
	\begin{equation}\label{lemma2-3}
		\|\bmu\|_{L^q(\Omega)}\leq C_1\|\bmu\|_{L^2(\Omega)}^\frac{2}{q}\|\nabla\bmu\|_{L^2(\Omega)}^{1-\frac{2}{q}}+C_2\|\bmu\|_{L^2(\Omega)}
	\end{equation}
	and
		\begin{equation}\label{lemma2-4}
		{\Vert \bmv \Vert}_{L^\infty(\Omega)} \le C_1{\Vert \bmv \Vert}_{L^2(\Omega)}^{\frac{q-2}{2q-2}}{\Vert \nabla \bmv \Vert}_{L^q(\Omega)}^{\frac{q}{2q-2}}+C_2\|\bmv\|_{L^2(\Omega)}. 
	\end{equation}
	Furthermore, if $ \bmu\cdot\boldsymbol{n}|_{\partial \Omega}=\bmv\cdot\boldsymbol{n}|_{\partial \Omega}=0\text{ or } \int_{\Omega} u_i dx=\int_\Omega v_i=0$ with $i=1,2$, then $C_2=0$.  
\end{lemma}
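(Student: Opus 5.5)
The statement to prove is the Gagliardo--Nirenberg--Sobolev lemma: the interpolation inequality \eqref{lemma2-3} and the $L^\infty$ bound \eqref{lemma2-4}. I would prove both by reducing to the whole plane and then handling the lower-order term. Let $E$ be a bounded linear extension operator for the Lipschitz domain $\Omega$ (Stein's operator). It is bounded simultaneously $L^2(\Omega)\to L^2(\mathbb R^2)$, $H^1(\Omega)\to H^1(\mathbb R^2)$ and $W^{1,q}(\Omega)\to W^{1,q}(\mathbb R^2)$.

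\emph{Step 1: the inequalities on $\mathbb R^2$.} For $f\in H^1(\mathbb R^2)$ the Ladyzhenskaya/Gagliardo--Nirenberg inequality gives $\|f\|_{L^q}\le C\|f\|_{L^2}^{2/q}\|\nabla f\|_{L^2}^{1-2/q}$. One proof uses Sobolev embedding $\dot H^{s}\subset L^q$ with $s=1-2/q$ and interpolates $\|f\|_{\dot H^s}\le \|f\|_{L^2}^{1-s}\|\nabla f\|_{L^2}^{s}$ on the Fourier side. For $g\in W^{1,q}(\mathbb R^2)$, $q>2$, I would use the Morrey-type estimate on balls:
\[
|g(x)|\le C\Big(r^{-1}\|g\|_{L^2(B_r(x))}+r^{1-2/q}\|\nabla g\|_{L^q}\Big).
\]
Optimizing in $r$ with $r^{2-2/q}=\|g\|_{L^2}/\|\nabla g\|_{L^q}$ gives exactly the exponents $\frac{q-2}{2q-2}$ and $\frac{q}{2q-2}$ in \eqref{lemma2-4}.

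\emph{Step 2: transfer to $\Omega$.} Apply Step 1 to $E\bmu$ and $E\bmv$. The extension controls $\|\nabla E\bmu\|_{L^2}\le C(\|\nabla\bmu\|_{L^2}+\|\bmu\|_{L^2})$. Then $(a+b)^{\theta}\le a^\theta+b^\theta$ splits the product into a main term plus a term of the form $C\|\bmu\|_{L^2}$. This yields \eqref{lemma2-3} and \eqref{lemma2-4} with some $C_2$.

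\emph{Step 3: removing $C_2$.} This is the only delicate point. Under either side condition ($\int_\Omega u_i=0$, or $\bmu\cdot\boldsymbol n=0$ on $\partial\Omega$), I would prove the Poincaré-type inequality $\|\bmu\|_{L^2}\le C\|\nabla\bmu\|_{L^2}$ by the standard compactness contradiction argument. If it failed, normalized $\bmu_k$ with $\nabla\bmu_k\to0$ would converge strongly in $L^2$, by Rellich on the Lipschitz domain, to a nonzero constant vector $\boldsymbol c$. Mean zero is preserved in the limit, and so is $\boldsymbol c\cdot\boldsymbol n=0$ by trace continuity. A constant vector with zero mean vanishes. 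A constant vector tangent everywhere to $\partial\Omega$ must also vanish, since the normals of a bounded domain span $\mathbb R^2$. Either way this contradicts the normalization.

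\emph{Step 4: absorbing the lower-order terms.} For \eqref{lemma2-3}, the term $C_2\|\bmu\|_{L^2}=C_2\|\bmu\|_{L^2}^{2/q}\|\bmu\|_{L^2}^{1-2/q}$ is bounded by $C\|\bmu\|_{L^2}^{2/q}\|\nabla\bmu\|_{L^2}^{1-2/q}$, using Poincaré only on the second factor. For \eqref{lemma2-4}, Poincaré combined with Hölder gives $\|\bmv\|_{L^2}\le C\|\nabla\bmv\|_{L^2}\le C\|\nabla \bmv\|_{L^q}$ on the bounded domain. Then $\|\bmv\|_{L^2}=\|\bmv\|_{L^2}^{\frac{q-2}{2q-2}}\|\bmv\|_{L^2}^{\frac{q}{2q-2}}\le C\|\bmv\|_{L^2}^{\frac{q-2}{2q-2}}\|\nabla\bmv\|_{L^q}^{\frac{q}{2q-2}}$, so the extra term is again absorbed into the main one.

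The main obstacle is Step 3: the boundary-condition case of the Poincaré inequality, which needs the geometric fact that no nonzero constant field is tangent to all of $\partial\Omega$. In this paper's application $\Omega$ is a disk, so that fact is immediate.
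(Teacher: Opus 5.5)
The paper does not prove this lemma. It quotes it from Nirenberg's 1959 paper as the classical Gagliardo--Nirenberg inequality, and asserts the clause $C_2=0$ (under the mean-zero or $\bmu\cdot\boldsymbol n=0$ condition) without justification. Your proposal is a correct, self-contained route that differs from the paper only in that the paper cites rather than proves.

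Your ingredients are:
\begin{itemize}
\item Stein extension, to pass to $\mathbb R^2$.
\item The scale-invariant whole-space inequalities: Fourier interpolation with $\dot H^{1-2/q}\subset L^q$ for the first, and Morrey on balls optimized in $r$ for the second. The exponents $\frac{q-2}{2q-2}$ and $\frac{q}{2q-2}$ come out right.
\item A Rellich compactness argument giving $\|\bmu\|_{L^2}\le C\|\nabla\bmu\|_{L^2}$ under either side condition, which then absorbs the lower-order term.
\end{itemize}
What your route buys over the citation is an actual proof of the $C_2=0$ clause, which is not literally part of Nirenberg's statement. What the citation buys is brevity.

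Two details need tightening.

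First, in your Step 2 the bookkeeping for $\bmv$ is not as you describe. The extension only gives $\|\nabla E\bmv\|_{L^q(\mathbb R^2)}\le C(\|\nabla\bmv\|_{L^q}+\|\bmv\|_{L^q})$. Splitting the product therefore leaves $C\|\bmv\|_{L^2}^{\theta}\|\bmv\|_{L^q}^{1-\theta}$ with $\theta=\frac{q-2}{2q-2}$, not $C\|\bmv\|_{L^2}$. This is easily repaired in either of two ways:
\begin{itemize}
\item Interpolate $\|\bmv\|_{L^q}\le\|\bmv\|_{L^2}^{2/q}\|\bmv\|_{L^\infty}^{1-2/q}$. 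This turns the term into $C\|\bmv\|_{L^2}^{1-\theta}\|\bmv\|_{L^\infty}^{\theta}\le\frac12\|\bmv\|_{L^\infty}+C\|\bmv\|_{L^2}$, which you absorb on the left.
\item Extend $\bmv-\bar{\bmv}$ instead, for which $\|\bmv-\bar{\bmv}\|_{W^{1,q}}\le C\|\nabla\bmv\|_{L^q}$, and bound the constant by $|\bar{\bmv}|\le C\|\bmv\|_{L^2}$. This version also gives the mean-zero case of $C_2=0$ directly.
\end{itemize}

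Second, on a merely Lipschitz domain, your remark that the normals span $\mathbb R^2$ is best made rigorous with the divergence theorem. If $\boldsymbol c\cdot\boldsymbol n=0$ a.e.\ on $\partial\Omega$, then $|\boldsymbol c|^2|\Omega|=\int_\Omega\tdiv\big((\boldsymbol c\cdot x)\boldsymbol c\big)\,dx=\int_{\partial\Omega}(\boldsymbol c\cdot x)(\boldsymbol c\cdot\boldsymbol n)\,dS=0$, so $\boldsymbol c=0$.
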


The following Brezis-Wainger inequalities cound be found in \cite{Brezis-Wainger-1980}. 
\begin{lemma} \label{Brezis-Wainger}
	For $p>2$,there exist a positive constant $C$ depending only on $p$ and $\Omega \subset \mathbb{R}^2$, such that every function $f \in W^{1,p}(\Omega)$ satisfies 
	\begin{equation*}
		\slnorm{f}{\infty} \leq C\slnorm{\nabla f}{2}\log^{1/2} (e+ \slnorm{\nabla f}{p}) +C\slnorm{f}{2}+C.
	\end{equation*}
\end{lemma}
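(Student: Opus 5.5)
The statement just before this point is the Brezis--Wainger inequality (Lemma \ref{Brezis-Wainger}), a standard result that the paper cites from \cite{Brezis-Wainger-1980}. I would prove it by reducing to the whole plane and using a critical Trudinger-type interpolation argument.

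\textbf{Extension.} Since $\Omega$ is a bounded Lipschitz (here smooth) domain, I first apply a bounded extension operator $E$. It maps $W^{1,2}(\Omega)\to W^{1,2}(\mathbb{R}^2)$ and $W^{1,p}(\Omega)\to W^{1,p}(\mathbb{R}^2)$ at the same time. Multiplying by a fixed cutoff, the extension can be taken supported in a fixed ball $B$. It satisfies
\[
\|\nabla Ef\|_{L^2}\le C(\|\nabla f\|_{L^2}+\|f\|_{L^2}),\qquad \|\nabla Ef\|_{L^p}\le C(\|\nabla f\|_{L^p}+\|f\|_{L^p}).
\]
The lower-order terms can be absorbed. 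By Sobolev, $\|f\|_{L^p}\le C(\|\nabla f\|_{L^2}+\|f\|_{L^2})$. The map $s\mapsto s\log^{1/2}(e+s)$ is increasing, and $\log(e+a+b)\le \log(e+a)+\log(e+b)$. These facts convert any bound in terms of $\|\nabla Ef\|$ back into the right-hand side of the lemma, up to the additive constant $C$.

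\textbf{Representation and splitting.} For $g=Ef\in W^{1,p}_0(B)$, I use the potential representation
\[
|g(x)|\le C\int_B \frac{|\nabla g(y)|}{|x-y|}\,dy .
\]
For $0<\varepsilon<1\le \operatorname{diam}B$ (enlarging $B$ if needed), I split the integral into $|x-y|<\varepsilon$ and $\varepsilon\le|x-y|\le \operatorname{diam}B$.

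On the near part, H\"older with exponent $p$ gives a bound $C\,\|\nabla g\|_{L^p}\,\varepsilon^{1-2/p}$, because $|x-y|^{-1}\in L^{p'}_{loc}$ exactly when $p>2$.

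On the far part, Cauchy--Schwarz gives
\[
C\,\|\nabla g\|_{L^2}\Big(\int_\varepsilon^{\operatorname{diam}B} r^{-2}\,r\,dr\Big)^{1/2}\le C\,\|\nabla g\|_{L^2}\,\log^{1/2}(C/\varepsilon).
\]

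\textbf{Choice of $\varepsilon$.} I choose $\varepsilon=(e+\|\nabla g\|_{L^p})^{-p/(p-2)}$. Then the near term is at most $C\,\|\nabla g\|_{L^p}/(e+\|\nabla g\|_{L^p})\le C$. The far term becomes $C\,\|\nabla g\|_{L^2}\log^{1/2}(e+\|\nabla g\|_{L^p})$, up to constants depending on $p$.

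Taking the supremum over $x$ and returning to $f$ through the extension estimates gives
\[
\|f\|_{L^\infty}\le C\|\nabla f\|_{L^2}\log^{1/2}(e+\|\nabla f\|_{L^p})+C\|f\|_{L^2}+C.
\]

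\textbf{Main obstacle.} The only delicate point is the bookkeeping of the lower-order terms from the extension inside the logarithm. Those terms come with factor $\|f\|_{L^2}\log^{1/2}(e+\|f\|_{L^2}+\|\nabla f\|_{L^2}+\|\nabla f\|_{L^p})$. I would handle them with the elementary inequality $ab\le a\log^{1/2}(e+b)\cdot\ldots$, or more simply by first subtracting the mean $\bar f$ and using Poincar\'e. With the mean removed, $\|f-\bar f\|_{L^2}\le C\|\nabla f\|_{L^2}$, so everything is controlled by $\|\nabla f\|_{L^2}\log^{1/2}(e+\|\nabla f\|_{L^p})+C$. The mean itself contributes $|\bar f|\le C\|f\|_{L^2}$. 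This subtraction is the route I would take.
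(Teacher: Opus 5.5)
Your proof is correct. The paper does not prove this lemma; it only cites Brezis--Wainger, where the estimate follows from general convolution inequalities for Riesz-type kernels in arbitrary dimension, covering the whole family of limiting Sobolev embeddings. Your argument is an elementary, self-contained version specialised to $n=2$ and first derivatives, in three steps.
\begin{itemize}
\item You use the pointwise bound $|g(x)|\le C\int_B |\nabla g(y)|\,|x-y|^{-1}\,dy$.
\item You split the kernel at radius $\varepsilon$. Near the singularity, H\"older in $L^p$ works exactly because $p>2$. Away from it, Cauchy--Schwarz in $L^2$ produces the logarithmically divergent factor $\bigl(\int_{\varepsilon<|z|<\operatorname{diam}B}|z|^{-2}\,dz\bigr)^{1/2}$.
\item The choice $\varepsilon=(e+\|\nabla g\|_{L^p})^{-p/(p-2)}$ makes the near part $O(1)$ and turns the far part into exactly $\|\nabla g\|_{L^2}\log^{1/2}(e+\|\nabla g\|_{L^p})$.
\end{itemize}
The citation buys generality. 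Your route makes explicit where $p>2$ and the square root of the logarithm come from, which is all the paper needs.

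One correction to your bookkeeping: your extension paragraph says the lower-order terms ``can be absorbed,'' and that is false as written. Without removing the mean, you are left with a term $C\|f\|_{L^2}\log^{1/2}(e+\|\nabla f\|_{L^p})$. This cannot be bounded by $C\|f\|_{L^2}+C\|\nabla f\|_{L^2}\log^{1/2}(e+\|\nabla f\|_{L^p})+C$. To see this, take $f=M+h$ with $M$ large, $\|\nabla h\|_{L^2}$ small and $\|\nabla h\|_{L^p}$ large, e.g.\ a narrow low bump. So the ``elementary inequality'' alternative you float cannot work, and subtracting the mean is necessary rather than merely simpler.

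The mean subtraction works because $\Omega$ (the disk) is a connected Lipschitz domain. Poincar\'e--Wirtinger gives $\|f-\bar f\|_{L^2}\le C\|\nabla f\|_{L^2}$ and $\|f-\bar f\|_{L^p}\le C\|\nabla f\|_{L^p}$. Hence $\|\nabla E(f-\bar f)\|_{L^2}\le C\|\nabla f\|_{L^2}$ and $\|\nabla E(f-\bar f)\|_{L^p}\le C\|\nabla f\|_{L^p}$. The inequality $\log(e+Cs)\le C'\log(e+s)$ then closes the argument, and $|\bar f|\le C\|f\|_{L^2}$ supplies the middle term of the statement.
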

The following Sobolev inequalities will be used frequently.
\begin{lemma}
	Let $\Omega$ be a bounded Lipschitz domain in $\mathbb{R}^2$. Then,  for any $ q>2$, there is a positive constant $C_q$ depending on $q$ and $\Omega$, such that for any functions $\bmu:\Omega\rightarrow\mathbb{R}^2$ with $ \bmu\cdot\boldsymbol{n}|_{\partial \Omega}=0\text{ or } \int_{\Omega} u_i dx=0, i=1,2$, and $\nabla\bmu\in L^{\frac{2q}{q+2}}(\Omega)$, we have
	\begin{equation}\label{lemma2-1}
		{\Vert \bmu \Vert}_{L^q(\Omega)} \le C_q{\Vert \nabla \bmu \Vert}_{L^{\frac{2q}{q+2}}(\Omega)}  . 
	\end{equation} 
\end{lemma}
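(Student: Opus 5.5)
The plan is to combine the standard Sobolev embedding with a Poincaré-type inequality, treating the two side conditions separately only at the Poincaré step. Set $s=\frac{2q}{q+2}$. Since $q\in(2,\infty)$, we have $s\in(1,2)$. A direct computation gives $2-s=\frac{4}{q+2}$, so the Sobolev conjugate is
$$s^*=\frac{2s}{2-s}=q.$$
Since $\Omega$ is a bounded Lipschitz domain, the embedding $W^{1,s}(\Omega)\hookrightarrow L^{s^*}(\Omega)$ holds, obtained for example through a bounded extension operator to $\mathbb{R}^2$ followed by the Gagliardo--Nirenberg--Sobolev inequality. Applied componentwise, it yields
$$\|\bmu\|_{L^q}\le C\big(\|\nabla\bmu\|_{L^s}+\|\bmu\|_{L^s}\big).$$
It therefore suffices to prove the Poincaré inequality $\|\bmu\|_{L^s}\le C\|\nabla \bmu\|_{L^s}$ under either side condition. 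The hypothesis $\nabla\bmu\in L^{s}$, together with $\Omega$ bounded and either condition, already places $\bmu$ in $W^{1,s}$, so the estimate makes sense.

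In the zero-mean case $\int_\Omega u_i\,dx=0$, this is the classical Poincaré--Wirtinger inequality on a connected bounded Lipschitz domain, applied to each component.

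In the case $\bmu\cdot\boldsymbol n|_{\partial\Omega}=0$, I would argue by contradiction using compactness.
\begin{enumerate}
\item Suppose there are $\bmu_k\in W^{1,s}$ with $\bmu_k\cdot\boldsymbol n=0$ on $\partial\Omega$, $\|\bmu_k\|_{L^s}=1$ and $\|\nabla\bmu_k\|_{L^s}\to0$.
\item The sequence is bounded in $W^{1,s}$, and $s>1$ makes the space reflexive. By Rellich--Kondrachov, a subsequence converges weakly in $W^{1,s}$ and strongly in $L^s$ to some $\bmu$.
\item The limit satisfies $\nabla\bmu=0$, so $\bmu\equiv\boldsymbol c$ is a constant vector (using connectedness), with $\|\boldsymbol c\|_{L^s}=1$.
\item The trace map $W^{1,s}(\Omega)\to L^s(\partial\Omega)$ is linear and bounded, hence weakly continuous. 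Therefore $\boldsymbol c\cdot\boldsymbol n=0$ a.e. on $\partial\Omega$.
\end{enumerate}

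The one geometric point, which I expect to be the only mildly delicate step, is to show that a nonzero constant vector cannot be tangent a.e. on the boundary of a bounded Lipschitz domain. I would check this via the divergence theorem. Take $\phi(x)=\boldsymbol c\cdot x$; then
$$\int_\Omega|\boldsymbol c|^2\,dx=\int_\Omega \tdiv(\phi\,\boldsymbol c)\,dx=\int_{\partial\Omega}\phi\,(\boldsymbol c\cdot\boldsymbol n)\,dS=0.$$
This forces $\boldsymbol c=0$, which contradicts $\|\boldsymbol c\|_{L^s}=1$.

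Combining the Poincaré inequality with the Sobolev estimate above gives the claimed bound
$$\|\bmu\|_{L^q}\le C_q\|\nabla\bmu\|_{L^{\frac{2q}{q+2}}},$$
with $C_q$ depending only on $q$ and $\Omega$. In the disk setting used in this paper the argument is even simpler, since $\boldsymbol n=x/R$ visibly spans $\mathbb{R}^2$.
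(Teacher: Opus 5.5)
Your proof is correct. The paper gives no proof of this lemma; it is quoted as a standard Sobolev inequality, so there is no argument of the authors' to compare step by step. What you give is the standard proof. With $s=\frac{2q}{q+2}\in(1,2)$ and $s^*=q$, the embedding $W^{1,s}(\Omega)\hookrightarrow L^{q}(\Omega)$ reduces everything to a Poincar\'{e} inequality in $L^s$. In the mean-zero case this is Poincar\'{e}--Wirtinger. In the tangential case it is a Rellich--Kondrachov contradiction argument, closed by the identity $|\boldsymbol c|^2|\Omega|=\int_{\partial\Omega}(\boldsymbol c\cdot x)(\boldsymbol c\cdot\boldsymbol n)\,dS$. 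This covers the lemma in the full generality stated, at the price of a non-explicit constant.

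Two small remarks.

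First, $\bmu\in L^s$ (hence $\bmu\in W^{1,s}$) follows from $\nabla\bmu\in L^s$ because $\Omega$ is a bounded connected Lipschitz domain (Ne\v{c}as/Deny--Lions). It does not come from the side conditions, which themselves presuppose that $\bmu$ is integrable and has a trace. Connectedness is also what makes the limit in your step 3 constant.

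Second, where the paper actually invokes the lemma is the bound $\|\dot{\bmu}\|_{L^4}\le C\|\nabla\dot{\bmu}\|_{L^{4/3}}$ in the estimate of $N_2$, and there $\dot{\bmu}|_{\partial\Omega}=0$. In that case, extending by zero and applying the whole-space Gagliardo--Nirenberg--Sobolev inequality gives the bound immediately, with a constant depending only on $q$ and no compactness argument. For radial fields $u(r)\frac{x}{r}$ on the disk, $\bmu\cdot\boldsymbol n=u(R)$, so the condition $\bmu\cdot\boldsymbol n=0$ coincides with $\bmu|_{\partial\Omega}=0$ anyway.
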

The following Poincar\'{e} type inequality can be found in \cite{Feireisl2004}.
\begin{lemma}
	Let $\Omega$ be a bounded Lipschitz domain. If $\bmv\in H^1(\Omega)$ and $\rho$ is a non-negative function such that
	$$0<c\leq \int_\Omega\rho dx,\quad\int_\Omega\rho^\gamma\leq E_0$$
	with $\gamma>1$. Then, there exists a constant $C$ depending only on $\Omega, c, E_0$ and $\gamma$ such that
	\begin{equation}\label{Poincare-in}
		\|\bmv\|_{L^2(\Omega)}^2\leq C(\|\sqrt{\rho}\bmv\|_{L^2(\Omega)}^2+\|\nabla\bmv\|_{L^2(\Omega)}^2).
	\end{equation}
\end{lemma}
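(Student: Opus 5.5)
This Poincar\'e-type inequality is proved by contradiction combined with a compactness argument. The proof uses only the two constraints on $\rho$: a lower bound on its mass and an upper bound on its $L^\gamma$ norm.

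Suppose the inequality \eqref{Poincare-in} fails. Then there exist sequences $\bmv_n\in H^1(\Omega)$ and nonnegative $\rho_n$ with $\int_\Omega\rho_n\,dx\ge c$ and $\int_\Omega\rho_n^\gamma\,dx\le E_0$, such that
\[
\|\bmv_n\|_{L^2}=1,\qquad \|\sqrt{\rho_n}\bmv_n\|_{L^2}^2+\|\nabla\bmv_n\|_{L^2}^2\to0 .
\]

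\textbf{Compactness for $\bmv_n$.} The sequence $\bmv_n$ is bounded in $H^1(\Omega)$. Since $\Omega$ is a bounded Lipschitz domain, Rellich's theorem together with the Sobolev embedding (valid in two dimensions) gives a subsequence with $\bmv_n\to\bmv$ strongly in $L^p(\Omega)$ for every $p<\infty$. Because $\nabla\bmv_n\to0$ in $L^2$, the limit satisfies $\nabla\bmv=0$. Since $\Omega$ is connected, $\bmv\equiv\boldsymbol{a}$ is a constant vector. From $\|\bmv\|_{L^2}=1$ we get $|\boldsymbol{a}|=|\Omega|^{-1/2}\neq0$.

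\textbf{Compactness for $\rho_n$.} The sequence $\rho_n$ is bounded in $L^\gamma$ with $\gamma>1$. Passing to a further subsequence, $\rho_n\rightharpoonup\rho$ weakly in $L^\gamma(\Omega)$. Testing against the constant function $1\in L^{\gamma'}$ gives $\int_\Omega\rho\,dx=\lim_n\int_\Omega\rho_n\,dx\ge c$. Weak limits of nonnegative functions are nonnegative, so $\rho\ge0$.

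\textbf{Passing to the limit in $\int\rho_n|\bmv_n|^2$.} Set $p=2\gamma'$. Then $|\bmv_n|^2\to|\boldsymbol{a}|^2$ strongly in $L^{\gamma'}$, because
\[
\big\||\bmv_n|^2-|\boldsymbol{a}|^2\big\|_{L^{\gamma'}}\le \|\bmv_n-\boldsymbol{a}\|_{L^{2\gamma'}}\,\|\bmv_n+\boldsymbol{a}\|_{L^{2\gamma'}}\to0 .
\]
The pairing of a weakly convergent sequence in $L^\gamma$ with a strongly convergent sequence in $L^{\gamma'}$ converges. Hence
\[
0=\lim_n\int_\Omega\rho_n|\bmv_n|^2\,dx=|\boldsymbol{a}|^2\int_\Omega\rho\,dx\ge|\boldsymbol{a}|^2c>0 ,
\]
which is a contradiction. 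Therefore the constant $C$ exists, and by construction it depends only on $\Omega$, $c$, $E_0$ and $\gamma$.

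\textbf{Main point of care.} The one delicate step is the last limit passage. It needs the weak $L^\gamma$ compactness of $\rho_n$ paired with the strong $L^{\gamma'}$ convergence of $|\bmv_n|^2$. This pairing is exactly where $\gamma>1$ is used: for $\gamma=1$ the sequence $\rho_n$ could concentrate, and the limit passage would fail. The two-dimensional setting ensures $H^1\hookrightarrow L^{2\gamma'}$ compactly for every finite $\gamma'$. In higher dimensions one would need $\gamma$ large enough that $2\gamma'$ stays below the Sobolev exponent.
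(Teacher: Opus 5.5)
Your proof is correct and complete. The normalization, the compact embedding $H^1(\Omega)\hookrightarrow L^{2\gamma'}(\Omega)$, and the identification of the limit as a nonzero constant $\boldsymbol{a}$ all check out; connectedness of the domain is genuinely needed there. So does the weak--strong pairing of $\rho_n\rightharpoonup\rho$ in $L^\gamma$ against $|\bmv_n|^2\to|\boldsymbol{a}|^2$ in $L^{\gamma'}$.

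The paper gives no proof of this lemma; it only cites Feireisl's monograph. Your argument therefore supplies what the paper outsources.

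A route worth comparing with yours is the direct one, which avoids contradiction. Let $\bar{\bmv}=|\Omega|^{-1}\int_\Omega\bmv\,dx$. H\"older's inequality and the Poincar\'e--Sobolev inequality $\|\bmv-\bar{\bmv}\|_{L^{2\gamma'}}\le C(\Omega,\gamma)\|\nabla\bmv\|_{L^2}$ give
\[
c\,|\bar{\bmv}|^2\le\int_\Omega\rho|\bar{\bmv}|^2dx\le2\|\sqrt{\rho}\bmv\|_{L^2}^2+2\|\rho\|_{L^\gamma}\|\bmv-\bar{\bmv}\|_{L^{2\gamma'}}^2\le2\|\sqrt{\rho}\bmv\|_{L^2}^2+CE_0^{1/\gamma}\|\nabla\bmv\|_{L^2}^2 .
\]
Then $\|\bmv\|_{L^2}^2\le2\|\bmv-\bar{\bmv}\|_{L^2}^2+2|\Omega||\bar{\bmv}|^2$ yields the lemma with $C=C(\Omega,\gamma)\bigl(1+(1+E_0^{1/\gamma})/c\bigr)$.

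Both proofs rest on the same two facts: the embedding of $H^1$ into $L^{2\gamma'}$ and the lower bound on the mass. Your contradiction argument only shows that some constant exists. The direct argument shows explicitly how it depends on $c$ and $E_0$.

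One correction to your closing remark: the restriction you describe in higher dimensions is an artifact of pairing $\rho_n$ with $|\bmv_n|^2$ itself. Pair it instead with the truncation $\min\{|\bmv_n|^2,|\boldsymbol{a}|^2\}$. This sequence has three useful properties:
\begin{itemize}
\item it is bounded by $|\boldsymbol{a}|^2$;
\item it is dominated by $|\bmv_n|^2$;
\item it satisfies $0\le|\boldsymbol{a}|^2-\min\{|\bmv_n|^2,|\boldsymbol{a}|^2\}\le2|\boldsymbol{a}||\bmv_n-\boldsymbol{a}|$.
\end{itemize}
It therefore converges to $|\boldsymbol{a}|^2$ in every $L^p$, $p<\infty$, using only the $L^2$ compactness of $\bmv_n$. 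The same contradiction then goes through in any dimension and for every $\gamma>1$.
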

The following Beale-Kato-Majda type inequality (see \cite{HuangXLiJ2011}) will play an important role in the estimates of $\|\nabla\bmu\|_{L^\infty}$ and $\|\nabla\rho\|_{L^q}$ with $q>2$. 
\begin{lemma}
	Let $\Omega$ be a bounded Lipschitz domain in $\mathbb{R}^2$. Then, for any $ 2<q<\infty $, there is a constant $C$ depending on $q,\Omega$, such that the following estimate holds for all $\nabla \bmu \in W^{1, q}(\Omega),  $
	\begin{equation}\label{lemma2-5}
		{\Vert \nabla \bmu \Vert}_{L^\infty(\Omega)} \le C(\|\text{\rm div}\bmu \|_{L^\infty(\Omega)}+{\|\text{\rm curl} \bmu \|}_{L^\infty(\Omega)})\log(e+{\Vert \nabla^2 \bmu \Vert}_{L^q(\Omega)})+C{\Vert \nabla \bmu \Vert}_{L^2(\Omega)}+C. 
	\end{equation}
\end{lemma}
The following fact found in \cite{HuangSu2024} for the radially symmetric function $\bmu$ satisfying \eqref{symmetric assumption} and \eqref{BC2} will be used.
\begin{lemma}
	Assume that $\bmu(x, t)$ and $u(r, t)$ satisfy \eqref{symmetric assumption} and \eqref{BC2},  then it hold that 
	\begin{equation}\label{lemma2-2}
		{\Vert u \Vert}_{\infty} \le  {\Vert \text{\rm div} \bmu \Vert}_2 \le  {\Vert \nabla \bmu \Vert}_2. 
	\end{equation}
\end{lemma}
%\begin{proof}
%	For any $ r \in (0, R)$,  direct calculation leads to 
%	\begin{align*}
%		{|u(r)|^2} &= 2 \int_0^r u \partial_r udr \le 2 \int_0^R |\partial_ru||\frac{u}{r}|rdr \\
%		&\le \int_0^R {|\partial_ru|}^2rdr+ \int_0^R {|\frac{u}{r}|}^2rdr =\int_0^R{(\partial_ru+\frac{u}{r})}^2rdr \\
%		&= {\Vert div\bmu \Vert}_2^2.  
%	\end{align*}
%	which implies that ${\Vert u \Vert}_{\infty} \le {\Vert div\bmu \Vert}_2$. 
%\end{proof}

%\begin{lemma}
%	For any smooth function $f$, we have 
%	\begin{equation}\label{lemma2-6}
%		\int_{\Omega} \rho \frac{D}{Dt} f dx =\frac{d}{dt} \int_{\Omega} \rho fdx, 
%	\end{equation}
%	where $\rho$ is the density satisfying $(1, 1)_1$ and $\frac{D}{Dt}f$ denotes the material derivative of a function $f, i. e. $
%	\begin{equation}\label{lemma2-7}
%		\frac{D}{Dt} f=f_t+\bmu \cdot \nabla f. 
%	\end{equation}
%
%	\begin{proof}
%		Due to the function $f$ is smooth and $(1, 1)_1$, we have 
%		\begin{equation}
%			\begin{aligned}
%				\int_{\Omega} \rho \frac{D}{Dt} f dx &= \int_{\Omega}  \rho (f_t+\bmu \cdot \nabla f) dx\\
%				&=\int_{\Omega}  \rho f_t dx -\int_{\Omega} \tdiv (\rho \bmu) f dx=\int_{\Omega}  \rho f_t dx +\int_{\Omega} \rho_t f dx
%				=\frac{d}{dt} \int_{\Omega} \rho fdx. 
%			\end{aligned}
%		\end{equation}
%	\end{proof}
%\end{lemma}
We also need the following $L^p $ estimates for elliptic system with \textit{Neumann} boundary condition, which will be found in Chapter 7 in \cite{Trudinger1977}. 
\begin{lemma}
	Let $\Omega$ be a bounded domain with $C^{k+2}$ boundary.  Given $f \in W^{k,p}(\Omega)$,  if $v$ is the solution to following \textit{Neumann} boundary problem of the elliptic system equation
	\begin{equation}\label{elliptic_sys2}
		\begin{cases}
			\Delta v=f,  \enspace in \enspace \Omega\\
			\frac{\partial v}{\partial n}=0, \enspace on \enspace \partial\Omega
		\end{cases}
	\end{equation}
	satisfies the following elliptic estimates
	\begin{equation}\label{elliptic_est2}
		\begin{aligned}
			\|\nabla v\|_{W^{k+1,p}(\Omega)} \leq C\| f \|_{W^{k,p}(\Omega)}. 
		\end{aligned}
	\end{equation}
\end{lemma}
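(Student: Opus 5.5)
This is a classical result, and I would prove it by reduction to the Agmon--Douglis--Nirenberg (ADN) $L^p$ theory for elliptic boundary value problems, rather than by any new argument.

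\textbf{Setup.} Two implicit points need to be made explicit first.
\begin{itemize}
\item Integrating $\Delta v=f$ over $\Omega$ and using $\partial v/\partial n=0$ gives the compatibility condition $\int_\Omega f\,dx=0$. The lemma tacitly assumes this whenever a solution exists.
\item The solution is unique only up to an additive constant. Since \eqref{elliptic_est2} controls only $\nabla v$, we may normalize $\int_\Omega v\,dx=0$ without loss of generality.
\end{itemize}

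\textbf{Existence and a weak bound.} For $f\in L^2$ with zero mean, the Lax--Milgram theorem on $\{w\in H^1:\int_\Omega w=0\}$ gives a unique weak solution. Since the Poincar\'e--Wirtinger inequality holds on this space, the solution satisfies $\|v\|_{H^1}\le C\|f\|_{L^2}$. For $f\in L^p$ with $p<2$, I would approximate $f$ by smooth zero-mean functions and pass to the limit using the a priori estimate below.

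\textbf{Case $k=0$.} Localize with a partition of unity. In a boundary patch, flatten $\partial\Omega$ by a $C^{2}$ diffeomorphism; this turns the problem into a uniformly elliptic operator with $C^{1}$ coefficients on a half-ball, with an oblique (conormal) boundary condition. The Neumann condition satisfies the complementing (Lopatinski--Shapiro) condition, so the ADN estimate yields
\[
\|v\|_{W^{2,p}}\le C\bigl(\|f\|_{L^p}+\|v\|_{L^p}\bigr).
\]
Interior patches are handled by the Calder\'on--Zygmund estimate. The commutator terms produced by the cutoffs are of lower order and are absorbed using interpolation.

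\textbf{Removing the lower-order term.} I would argue by contradiction and compactness. Suppose there are zero-mean $v_n$ with $\|v_n\|_{L^p}=1$ and $\|f_n\|_{L^p}\to0$. The estimate above bounds $v_n$ in $W^{2,p}$, so by Rellich a subsequence converges strongly in $L^p$ to a limit $v$. This limit is a zero-mean solution of the homogeneous Neumann problem, hence constant, hence $0$. That contradicts $\|v\|_{L^p}=1$. This step is the only place where the normalization and the compatibility condition genuinely enter, and I expect it to be the main point needing care. It gives
\[
\|v\|_{W^{2,p}}\le C\|f\|_{L^p},
\]
and in particular $\|\nabla v\|_{W^{1,p}}\le C\|f\|_{L^p}$.

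\textbf{Case $k\ge1$.} I would induct on $k$ in the flattened coordinates. Tangential difference quotients of $v$ satisfy a Neumann-type problem of the same kind, since the boundary is $C^{k+2}$ and the coefficients are correspondingly smooth. Applying the $k-1$ estimate to them controls all derivatives of order $k+2$ containing at least one tangential direction. The remaining purely normal derivative is then recovered algebraically from the equation, by solving for $\partial_{nn}v$ in terms of $f$ and the already-controlled derivatives. Patching the local estimates together and using the $k=0$ bound for the lower-order terms gives \eqref{elliptic_est2}.

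In the present paper this is only needed on a disk, where everything is smooth. Alternatively, one may simply cite Chapter 7 of \cite{Trudinger1977} together with the ADN theory.
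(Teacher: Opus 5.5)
Your proposal is correct and takes essentially the paper's approach. The paper gives no proof and simply cites classical elliptic $L^p$ theory for the Neumann problem (Chapter 7 of \cite{Trudinger1977}); your reduction to the Agmon--Douglis--Nirenberg estimates, with the compactness argument that removes $\|v\|_{L^p}$ and with the compatibility condition $\int_\Omega f\,dx=0$ and the zero-mean normalization stated explicitly (harmless, since only $\nabla v$ is estimated), is the standard argument behind that citation.

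The one step to tighten is the induction for $k\ge1$. After flattening and localizing, the cutoffs and the tangential difference quotients satisfy an oblique (conormal) problem with \emph{nonzero} boundary data. You therefore need the ADN estimate that includes the boundary-data term in the appropriate trace space, not the homogeneous-Neumann estimate of the lemma at level $k-1$. Alternatively, you can quote ADN's $W^{k+2,p}$ estimate directly.
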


The following \textit{Zlotnik} inequality \cite{Zlotnik2000} will be used to get the time independent  upper bound of the density. 
\begin{lemma}\label{Zlotnik}
	Let the function $y \in W^{1, 1}(0, T) $ satisfy $$ y^{\prime}(t)=g(y)+h^{\prime}(t) \enspace on \enspace [0, T], y(0)=y_0,  $$
	with $g \in C(R)$ and $h \in W^{1, 1}(0, T). $ If $g(\infty)=-\infty$ and 
	\begin{equation}\label{lemma2-8}
		h(t_2)-h(t_1) \le N_0 +N_1(t_2-t_1)
	\end{equation}
	for all $0 \le t_1 < t_2 \le T$ with some $N_0 \ge 0$ and $N_1 \ge 0$, then $$y(t)\le max(y_0, \zeta_0) +N_0 <\infty \enspace on \enspace [0, T],  $$
	where $\zeta$ is a constant such that $g(\zeta) \le -N_1$ for $\zeta \ge \zeta_0 $. 
\end{lemma}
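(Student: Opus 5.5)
The plan is a direct comparison (barrier) argument on a time interval where $y$ stays above the threshold. Write $\bar\zeta:=\max(y_0,\zeta_0)$. Since $g\in C(\mathbb{R})$ and $g(\infty)=-\infty$, a constant $\zeta_0$ with $g(\zeta)\le -N_1$ for all $\zeta\ge\zeta_0$ exists, so $\bar\zeta$ is finite. Because $y\in W^{1,1}(0,T)$, $y$ is absolutely continuous on $[0,T]$. Hence the identity
\begin{equation*}
y(t_2)-y(t_1)=\int_{t_1}^{t_2} g(y(s))\,ds+h(t_2)-h(t_1)
\end{equation*}
holds for all $0\le t_1<t_2\le T$. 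The integral is finite because $g\circ y$ is continuous on a compact interval.

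We argue by contradiction. Suppose there is $t_2\in(0,T]$ with $y(t_2)>\bar\zeta+N_0\ge\bar\zeta$. Define
\begin{equation*}
t_1:=\sup\{s\in[0,t_2]:\ y(s)\le\bar\zeta\}.
\end{equation*}
This set is nonempty because $y(0)=y_0\le\bar\zeta$. By continuity of $y$ we get $y(t_1)=\bar\zeta$, hence $t_1<t_2$. Moreover $y(s)>\bar\zeta\ge\zeta_0$ for every $s\in(t_1,t_2]$. Consequently $g(y(s))\le -N_1$ on $(t_1,t_2]$.

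Inserting this bound and the hypothesis \eqref{lemma2-8} into the integral identity gives
\begin{equation*}
y(t_2)\le y(t_1)-N_1(t_2-t_1)+N_0+N_1(t_2-t_1)=\bar\zeta+N_0 .
\end{equation*}
This contradicts the choice of $t_2$. Therefore $y(t)\le\max(y_0,\zeta_0)+N_0$ on all of $[0,T]$.

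No step is a real obstacle. The only point needing care is the choice of the last crossing time $t_1$. It guarantees both that $y(t_1)=\bar\zeta$ exactly and that $y$ stays in the region where $g\le -N_1$ on the whole interval $(t_1,t_2]$. With that choice, the linear growth term $N_1(t_2-t_1)$ allowed for $h$ is cancelled by the dissipation from $g$, which is exactly why the bound is independent of $T$.
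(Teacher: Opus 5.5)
Your proof is correct and complete. The paper gives no argument of its own for this lemma (it quotes it from \cite{Zlotnik2000}), and your last-crossing-time comparison is the standard proof of this inequality.

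One small tidying point. Because $\{s\in[0,t_2]: y(s)\le\bar\zeta\}$ is closed, $y(t_1)\le\bar\zeta$ follows at once, and this gives $t_1<t_2$. That inequality is all your final estimate uses, so you need not establish the equality $y(t_1)=\bar\zeta$. As written, you derive the equality before knowing $t_1<t_2$, and its lower half needs $t_1<t_2$.
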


We also need the integral type Gronwall's inequality below.
\begin{lemma}
     Let $I=[a, b]$ and $\alpha(t), \beta(t), u(t)$ be a real-valued continuous function defined on $I$. 
    Moreover, $\alpha(t)$ is  a nondecreasing function and $\beta(t)$ is a nonnegative function. 
    if 
    \begin{equation}
        \begin{aligned}
            v(t) \le \alpha(t)+\int_{a}^{t} \beta(s) v(s) ds, \enspace t \in I, 
        \end{aligned}
    \end{equation}
    then, 
    \begin{equation}\label{integration-type-gronwall}
        \begin{aligned}
            v(t) \le 2\alpha(t)e^{\int_{a}^{t} \beta(s) ds}. 
        \end{aligned}
    \end{equation}
    \begin{proof}
        Define a function 
        \[
        g(s)=e^{-\int_{a}^{s} \beta(r) dr}\int_{a}^{s}\beta(r)v(r)dr, s \in I. 
        \]
        Differentiating $g(s)$ gives
        \[
        g'(s) = (v(s)-\int_{a}^{s} \beta(r)v(r) dr)\beta(s)e^{-\int_{a}^{s} \beta(r) dr}, 
        \] 
        so we have that
        \[
        g'(s) \le  \alpha(s) \beta(s)e^{-\int_{a}^{s} \beta(r) dr}, 
        \]
        Due to $g(a)=0$, integrating above inequality on $[a, t]$, we have that
        \[
        g(t) \le \int_{a}^{t} \alpha(s) \beta(s)e^{-\int_{a}^{s} \beta(r) dr} ds. 
        \]
        Using the defination of $g(t)$, we have that
        \[
        \int_{a}^{t} \beta(s)v(s) ds = e^{\int_{a}^{t} \beta(r)dr } g(t) \le  \int_{a}^{t} \alpha(s) \beta(s)e^{\int_{s}^{t} \beta(r) dr} ds. 
        \]
        Because $\alpha(t)$ is a nondecreasing function, so we have 
        \begin{equation}
            \begin{aligned}
                v(t) &\le \alpha(t) + \int_{a}^{t} \alpha(s) \beta(s)e^{\int_{s}^{t} \beta(r) dr} ds\\
                & \le \alpha(t)+ \alpha(t) \int_{a}^{t} \beta(s)e^{\int_{s}^{t} \beta(r) dr} ds \le \alpha(t)+ \alpha(t) [ -e^{\int_{s}^{t} \beta(r)dr } ]_{s=a}^{s=t}\\
                & \le \alpha(t)+ \alpha(t) e^{\int_{a}^{t} \beta(r)dr } \le 2\alpha(t) e^{\int_{a}^{t} \beta(r)dr }. 
            \end{aligned}
        \end{equation}
    \end{proof}
\end{lemma}

\section{Time-independent upper bound of the density}
In this section and next one,  we will always assume that $(\rho, \bmu, \bmd)$ is the radially symmetric solutions  to \eqref{sCEL-sys}-\eqref{BC2} in $\Omega \times (0, T]$ with $T>0$ obtained by Lemma \ref{lwp}. 

We will derive the time-independent upper bound of the density whenever $\beta,\gamma$ satisfy \eqref{TH1. 1-1} in this section, and left the higher order estimates in next one. We begin with the following basic energy estimates. 
\begin{lemma}\label{basic energy estimate}There is a constant $C>0$ depending only on $\gamma$, $\Omega$, $\|\rho_0\|_{L^\infty}$, $\|\bmu_0\|_{L^2}$ and $\|\nabla\bmd_0\|_{L^2}$ such that
	\begin{equation}\label{B-E-estimate}
		\begin{aligned}
			&\sup_{0 \le t \le T} \int (\rho {|\bmu|}^2+\rho^{\gamma}+{|\nabla \bmd|}^2)dx \\
			&+\int_0^T \int (\mu {|\nabla \bmu|}^2+(2\mu +\lambda(\rho)){(\tdiv \bmu)}^2+ {|\Delta \bmd+{|\nabla \bmd|}^2\bmd|}^2) dxdt \le C.
		\end{aligned}
	\end{equation}
%\begin{equation}\label{d-est}
%	\int_0^T \int  {|\partial_{rr} \bmd|}^2 + {|\partial_{r}\bmd|}^4+\frac{|\partial_r\bmd|}{r^2}dxdt  \leq C,
%\end{equation}
%and
%\begin{equation}\label{d-est2}
%	\int_{0}^{T}\|\nabla^2\bmd\|_{L^2}^2 dx+\int_0^T \slnorm{\nabla \bmd}{\infty}^2 dx  \leq C.
%\end{equation}
\end{lemma}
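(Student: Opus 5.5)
We will derive the standard energy identity by testing the momentum equation with $\bmu$ and the director equation with $-(\Delta\bmd+|\nabla\bmd|^2\bmd)$, so that the coupling terms cancel. Then we integrate in time.

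\emph{Step 1: the fluid part.} Multiply the momentum equation in \eqref{CEL-sys} by $\bmu$ and integrate over $\Omega$. The continuity equation and $\bmu|_{\partial\Omega}=0$ give
\[
\int (\rho\bmu_t+\rho\bmu\cdot\nabla\bmu)\cdot\bmu\,dx=\frac{d}{dt}\int\frac12\rho|\bmu|^2dx,
\qquad
\int\nabla P\cdot\bmu\,dx=\frac{d}{dt}\int\frac{\rho^\gamma}{\gamma-1}dx .
\]
The second identity uses the renormalized equation $(\rho^\gamma)_t+\tdiv(\rho^\gamma\bmu)+(\gamma-1)\rho^\gamma\tdiv\bmu=0$. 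Integrating the viscous terms by parts (no boundary terms, since $\bmu=0$ on $\partial\Omega$) gives $\mu\|\nabla\bmu\|_2^2+\int(\mu+\lambda(\rho))(\tdiv\bmu)^2dx$. In the radial setting, or more generally for $\bmu\in H^1_0$, we have $\|\nabla\bmu\|_2^2=\|\tdiv\bmu\|_2^2+\|\mathrm{curl}\,\bmu\|_2^2$, which yields a dissipation of the form in \eqref{B-E-estimate} up to harmless constant factors. The Ericksen stress contributes
\[
\int \tdiv\Big(\nabla\bmd\odot\nabla\bmd-\frac12|\nabla\bmd|^2\mathbb I_2\Big)\cdot\bmu\,dx=\int (\bmu\cdot\nabla\bmd)\cdot\Delta\bmd\,dx .
\]
This follows from the pointwise identity $\partial_j(\partial_i\bmd\cdot\partial_j\bmd-\frac12|\nabla\bmd|^2\delta_{ij})=\partial_i\bmd\cdot\Delta\bmd$.

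\emph{Step 2: the director part.} Multiply the director equation by $-(\Delta\bmd+|\nabla\bmd|^2\bmd)$ and integrate. Because $|\bmd|=1$ is preserved by the harmonic map heat flow, we have $\bmd\cdot\bmd_t=0$ and $\bmd\cdot(\bmu\cdot\nabla)\bmd=0$. Hence the $|\nabla\bmd|^2\bmd$ part drops out against $\bmd_t+\bmu\cdot\nabla\bmd$. The Neumann condition gives $-\int\bmd_t\cdot\Delta\bmd\,dx=\frac{d}{dt}\frac12\|\nabla\bmd\|_2^2$. The transport term gives $-\int(\bmu\cdot\nabla\bmd)\cdot\Delta\bmd\,dx$, which exactly cancels the Ericksen contribution from Step 1. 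The right-hand side gives $-\|\Delta\bmd+|\nabla\bmd|^2\bmd\|_2^2$.

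\emph{Step 3: conclusion.} Adding the two identities and integrating over $(0,t)$ gives the energy identity, with the right-hand side equal to the initial energy
\[
\int\Big(\frac12\rho_0|\bmu_0|^2+\frac{\rho_0^\gamma}{\gamma-1}+\frac12|\nabla\bmd_0|^2\Big)dx .
\]
This is bounded by a constant depending only on $\gamma$, $|\Omega|$, $\|\rho_0\|_{L^\infty}$, $\|\bmu_0\|_{L^2}$ and $\|\nabla\bmd_0\|_{L^2}$. Taking the supremum in $t$ gives \eqref{B-E-estimate}, after adjusting constants, for instance by multiplying the dissipation by $2$.

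\emph{Main obstacle.} The only delicate point is justifying $|\bmd|\equiv1$ for the solution of Lemma~\ref{lwp}. Set $w=|\bmd|^2-1$. It satisfies
\[
w_t+\bmu\cdot\nabla w-\Delta w=2|\nabla\bmd|^2w
\]
with Neumann data and $w(0)=0$. For a strong solution, $\nabla\bmd\in L^\infty$ on $[0,T]$, so a Gronwall argument in $L^2$ gives $w\equiv0$. The integrations by parts are then justified by the regularity in \eqref{TH1. 1-4}.
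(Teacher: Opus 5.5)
Your proposal is correct and follows essentially the paper's own argument. You use the same test functions $\bmu$ and $-(\Delta\bmd+|\nabla\bmd|^2\bmd)$, so the Ericksen-stress and transport terms cancel, and you recover the $(2\mu+\lambda)(\tdiv\bmu)^2$ dissipation from $\|\nabla\bmu\|_2=\|\tdiv\bmu\|_2$. The paper obtains that identity from $\mathrm{curl}\,\bmu=0$ in the radial setting, whereas your $H^1_0$ identity $\|\nabla\bmu\|_2^2=\|\tdiv\bmu\|_2^2+\|\mathrm{curl}\,\bmu\|_2^2$ also covers non-radial $\bmu$. Your additional justification of $|\bmd|\equiv1$, via the parabolic equation for $|\bmd|^2-1$, is a step the paper takes for granted.
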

\begin{proof}
	Multiplying $\eqref{CEL-sys}_2$ and $\eqref{CEL-sys}_3$ by $\bmu$ and $-(\Delta \bmd+{|\nabla \bmd|}^2\bmd)$,  respectively,  summing the resultant equations up and integrating over $\Omega$,  then it follows from integration by parts,  $\eqref{CEL-sys}_1$ and \eqref{BC} that 
	$$\frac{d}{dt} \int \frac{1}{2}\rho {|\bmu|}^2 + \frac{\rho^{\gamma}}{\gamma-1}+\frac{1}{2} {|\nabla \bmd|}^2 dx+\int(\mu {|\nabla \bmu|}^2+(\mu +\lambda(\rho)){(\tdiv \bmu)}^2+{|\Delta \bmd+{|\nabla \bmd|}^2\bmd|}^2)dx=0. $$
	Integrating the above equality over $[0, T]$ gives that 
	\begin{equation}\label{BEe-proof-1}
		\begin{aligned}
			&\sup_{0 \le t \le T} \int \frac{1}{2}\rho {|\bmu|}^2 + \frac{\rho^{\gamma}}{\gamma-1}+\frac{1}{2} {|\nabla \bmd|}^2 dx\\
			&+\int_0^T  \int(\mu {|\nabla \bmu|}^2+(\mu +\lambda(\rho)){(\tdiv \bmu)}^2+{|\Delta \bmd+{|\nabla \bmd|}^2\bmd|}^2)dxdt \le C. 
		\end{aligned}
	\end{equation}
For $\bmu(x, t)=u(r, t)\frac{x}{r}$,  $\nabla\times\bmu=0$,  we have 
\begin{equation}\label{ellipest_1}
	\begin{cases}
		\Delta \bmu=\nabla \tdiv\bmu, \\
		\bmu|_{\partial \Omega}=0, 
	\end{cases}
\end{equation}
which yields that
\begin{equation}\label{idd}
	\int{|\nabla \bmu|}^2dx=\int |\tdiv\bmu|^2dx.
\end{equation}
Thus,  \eqref{B-E-estimate} follows from \eqref{BEe-proof-1} and \eqref{idd}.

%	Then, combining \eqref{dinfty} and \eqref{BEe-proof-4}, we can obtain
%	\begin{equation}
%		\begin{aligned}
%			\int_0^T \slnorm{\nabla \bmd}{\infty}^2 dx \le C. 
%		\end{aligned}
%	\end{equation}
%	Therefore,  we complete the proof. 
\end{proof}
Utilizing the radially symmetric property of $\bmd$, we can obtain the following key time-independent  $L^2_{t,x}$ estimates of $\nabla^2\bmd$ and $L^2_tL^\infty_x$ of $\nabla\bmd$.
\begin{lemma}\label{d-disp-est}
	There is a constant $C>0$ depending only on $\gamma$, $\Omega$, $\|\rho_0\|_{L^\infty}$, $\|\bmu_0\|_{L^2}$ and $\|\nabla\bmd_0\|_{L^2}$ such that
	\begin{equation}\label{d-est}
		\int_0^T \int  {|\partial_{rr} \bmd|}^2 + {|\partial_{r}\bmd|}^4+\frac{|\partial_r\bmd|}{r^2}dxdt  \leq C
	\end{equation}
	and
	\begin{equation}\label{d-est2}
		\int_{0}^{T}\|\nabla^2\bmd\|_{L^2}^2 dx+\int_0^T \slnorm{\nabla \bmd}{\infty}^2 dx  \leq C.
	\end{equation}
\end{lemma}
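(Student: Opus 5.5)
The plan is to reduce everything to the one-dimensional radial variable and to exploit the constraint $|\bmd|=1$ together with the dissipation bound $\int_0^T\|\tau\|_{L^2}^2dt\le C$ from Lemma \ref{basic energy estimate}. Here
\[
\tau:=\Delta\bmd+|\nabla\bmd|^2\bmd=\bmd_{rr}+\tfrac1r\bmd_r+|\bmd_r|^2\bmd .
\]
The term $\frac{|\partial_r\bmd|}{r^2}$ in \eqref{d-est} should be read as $\frac{|\partial_r\bmd|^2}{r^2}$. Throughout, $dx=2\pi r\,dr$.

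\textbf{Step 1 (algebraic identities).} Differentiating $|\bmd|^2=1$ gives $\bmd\cdot\bmd_r=0$ and $\bmd\cdot\bmd_{rr}=-|\bmd_r|^2$. Consequently
\[
\bmd_r\cdot\bmd_{rr}=\bmd_r\cdot\tau-\tfrac1r|\bmd_r|^2 .
\]
Also $\tau$ is the tangential projection of $\Delta\bmd$, so
\[
|\Delta\bmd|^2=|\tau|^2+|\bmd_r|^4 .
\]

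\textbf{Step 2 (rigidity from radial symmetry; this is the key step).} From Step 1,
\[
(r|\bmd_r|^2)_r=|\bmd_r|^2+2r\,\bmd_r\cdot\bmd_{rr}=-|\bmd_r|^2+2r\,\bmd_r\cdot\tau .
\]
I integrate this from $0$ to $r$, using $\bmd_r(0)=0$. This gives the pointwise bound
\[
\sup_r r|\bmd_r|^2\le 2\int_0^R|\bmd_r||\tau|r\,dr\le C\|\nabla\bmd\|_{L^2}\|\tau\|_{L^2}.
\]
I then integrate from $0$ to $R$ instead, using $\bmd_r(R)=0$, and obtain the Hardy-type bound
\[
\int|\nabla\bmd|^2|x|^{-1}dx=2\pi\int_0^R|\bmd_r|^2dr\le C\|\nabla\bmd\|_{L^2}\|\tau\|_{L^2}.
\]
Combining the two,
\[
\int|\bmd_r|^4dx\le C\Big(\sup_r r|\bmd_r|^2\Big)\int_0^R|\bmd_r|^2dr\le C\|\nabla\bmd\|_{L^2}^2\|\tau\|_{L^2}^2 .
\]
By Lemma \ref{basic energy estimate}, $\|\nabla\bmd\|_{L^2}$ is bounded uniformly in time and $\|\tau\|_{L^2}^2\in L^1(0,T)$ uniformly. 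Hence $\int_0^T\!\!\int|\bmd_r|^4\le C$. This replaces the angle condition: no concentration can occur because $r|\bmd_r|^2$ is controlled by the dissipation.

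\textbf{Step 3 (second derivatives).} Integrating by parts in $r$, with the boundary terms $|\bmd_r|^2|_0^R=0$, gives
\[
\int|\Delta\bmd|^2dx=2\pi\int_0^R\Big(|\bmd_{rr}|^2r+2\bmd_r\cdot\bmd_{rr}+\tfrac{|\bmd_r|^2}{r}\Big)dr=\int\Big(|\bmd_{rr}|^2+\tfrac{|\bmd_r|^2}{r^2}\Big)dx .
\]
Together with $|\Delta\bmd|^2=|\tau|^2+|\bmd_r|^4$ and Step 2, this yields \eqref{d-est}. Moreover $|\nabla^2\bmd|^2=|\bmd_{rr}|^2+|\bmd_r|^2/r^2$ in the radial setting, so the first part of \eqref{d-est2} follows.

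\textbf{Step 4 ($L^2_tL^\infty_x$ bound).} Using $\bmd_r(R)=0$,
\[
|\bmd_r(r)|^2=-2\int_r^R\bmd_r\cdot\bmd_{rr}\,ds\le 2\Big(\int_0^R\tfrac{|\bmd_r|^2}{s}ds\Big)^{1/2}\Big(\int_0^R|\bmd_{rr}|^2s\,ds\Big)^{1/2}\le C\Big\|\tfrac{\nabla\bmd}{|x|}\Big\|_{L^2}\|\bmd_{rr}\|_{L^2}.
\]
Integrating in time and applying Cauchy–Schwarz with Step 3 gives $\int_0^T\|\nabla\bmd\|_{L^\infty}^2dt\le C$.

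The main obstacle is Step 2. The sign of $-|\bmd_r|^2$ and the vanishing of the boundary terms must be checked carefully. The behaviour at $r=0$ also needs justification, namely $r|\bmd_r|^2\to0$, which holds for the local strong solution of Lemma \ref{lwp}.
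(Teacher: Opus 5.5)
Your argument is correct, but it reaches the key rigidity by a different mechanism than the paper.

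\textbf{The paper's route.} The paper expands $\|\tau\|_{L^2}^2$, with $\tau=\Delta\bmd+|\nabla\bmd|^2\bmd$, as $\int(|\partial_{rr}\bmd|^2+|\partial_r\bmd|^2/r^2-|\partial_r\bmd|^4)\,dx$. It then uses the pointwise consequence of $|\bmd|=1$, namely $|\partial_r\bmd|^2=-\bmd\cdot\partial_{rr}\bmd\le|\partial_{rr}\bmd|$, to discard $|\partial_{rr}\bmd|^2-|\partial_r\bmd|^4\ge 0$. This bounds the Hardy term $\int|\partial_r\bmd|^2/r^2\,dx$ by $\|\tau\|_{L^2}^2$ at once. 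The $L^4$ and $\partial_{rr}\bmd$ bounds are then closed using three ingredients: the embedding $W^{1,1}(0,R)\hookrightarrow L^\infty(0,R)$, the estimate $\|\partial_r\bmd\|_{L^4}^4\le C\|\partial_r\bmd\|_{L^\infty}^2$, and an $\varepsilon$-absorption against $\|\partial_{rr}\bmd\|_{L^2}^2\le\|\tau\|_{L^2}^2+\|\partial_r\bmd\|_{L^4}^4$.

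\textbf{Your route.} You instead test $\tau$ against the radial multiplier $\partial_r\bmd$, which is a Morawetz-type identity. This needs only $\bmd\cdot\partial_r\bmd=0$. It controls both $\sup_r r|\partial_r\bmd|^2$ and $\int_0^R|\partial_r\bmd|^2dr$ by $\|\nabla\bmd\|_{L^2}\|\tau\|_{L^2}$. The result is a radial Ladyzhenskaya inequality $\|\nabla\bmd\|_{L^4}^4\le C\|\nabla\bmd\|_{L^2}^2\|\tau\|_{L^2}^2$, with the tension field in place of $\nabla^2\bmd$. The second-derivative and Hardy bounds then follow from $|\Delta\bmd|^2=|\tau|^2+|\partial_r\bmd|^4$ and $\|\Delta\bmd\|_{L^2}=\|\nabla^2\bmd\|_{L^2}$.

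\textbf{What each buys.} The paper's route bounds the Hardy term with constant one and without using the energy, but it needs the absorption step. Yours needs no absorption. It also shows explicitly why neither small energy nor an angle condition is required: the usual small-energy step $\|\nabla\bmd\|_{L^4}^4\lesssim\|\nabla\bmd\|_{L^2}^2\|\nabla^2\bmd\|_{L^2}^2$ is replaced by one with $\tau$ on the right, so nothing has to be absorbed.

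\textbf{The point at $r=0$.} The behaviour at $r=0$ that you flag is harmless. If $\nabla^2\bmd\in L^2$, Cauchy--Schwarz as in your Step 4 gives $(|\partial_r\bmd|^2)_r\in L^1(0,R)$. Hence $|\partial_r\bmd|^2$ is continuous on $[0,R]$ and $r|\partial_r\bmd|^2\to 0$. The paper relies on the same boundary behaviour in its own integration by parts.
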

\begin{proof}
	Under the assumption of radial symmetry \eqref{symmetric assumption},  we have
	\begin{equation}\label{syscond_d}
		\begin{aligned}
			\Delta d_j(x, t)&=\partial_{rr}d_j(r, t) +\frac{1}{r}\partial_r d_j(r, t),  j=1, 2, 3\\
			\nabla d_j(x, t) &=\partial_r d_j(r, t)\frac{x}{r}, j=1, 2, 3\\
			{|\nabla \bmd|}^2&=\sum_{i=1}^2\sum_{j=1}^3 {|\partial_id_j|}^2= \sum_{i=1}^2\sum_{j=1}^3  {|\partial_r d_j \frac{x_i}{r}|}^2=\sum_{j} {|\partial_r d_j|}^2={|\partial_r \bmd|}^2. 
		\end{aligned}
	\end{equation}
	As a consequence,  
	\begin{equation}\label{BEe-proof-2}
		\begin{aligned}
			{\Vert \Delta \bmd+{|\nabla \bmd |}^2\bmd \Vert}_2^2
			&=\int_{\Omega} {|\partial_{rr}\bmd|}^2  +\frac{1}{r^2}{|\partial_{r}\bmd|}^2+ {|\partial_{r}\bmd|}^4+\frac{2}{r}{\partial_{rr}\bmd} \cdot {\partial_{r}\bmd}+2\partial_{rr}\bmd\cdot\bmd |\partial_{r}\bmd|^2dx\\
			&=\int_{\Omega} {|\partial_{rr}\bmd|}^2  +\frac{1}{r^2}{|\partial_{r}\bmd|}^2- {|\partial_{r}\bmd|}^4dx. 
		\end{aligned}
	\end{equation}
	where we have used
	$$ \int_{\Omega} \frac{1}{r}{\partial_{rr}\bmd} \cdot {\partial_{r}\bmd}dx=2\pi \int_0^R \frac{1}{2} \partial_r|\partial_r\bmd|^2dr=\pi(|\partial_r\bmd|^2(R)-|\partial_r\bmd|^2(0))=0$$
	and
	$$ -\partial_{rr}\bmd \cdot \bmd={|\partial_r \bmd|}^2$$
	which follows from $|\bmd|=1$.  It follows from ${|\partial_r \bmd|}^4 \le {|\partial_{rr} \bmd|}^2$ and \eqref{BEe-proof-2} that
	$$ \int_{\Omega} \frac{1}{r^2}{|\partial_{r}\bmd|}^2 dx\le {\Vert \Delta \bmd+{|\nabla \bmd |}^2\bmd \Vert}_2^2. $$
	Using \eqref{BEe-proof-1},  we have
	\begin{equation}\label{BEe-proof-3}
		\int_0^T \int_{\Omega} \frac{1}{r^2}{|\partial_{r}\bmd|}^2dx \le C. 
	\end{equation}
	Note that
	$$\int_{\Omega} \frac{1}{r^2}{|\partial_{r}\bmd|}^2 dx=2\pi\int_0^R \frac{1}{r}{|\partial_{r}\bmd|}^2 dr \ge \frac{2\pi}{R} \int_0^R {|\partial_{r}\bmd|}^2 dr. $$
	Then,  we have
	\begin{equation}\label{BEe-proof-4}
		\int_0^T \int_0^R{|\partial_{r}\bmd|}^2 dr  \le C. 
	\end{equation}
	%	using \eqref{BEe-proof-1}: $$ \int_{\Omega} {|\partial_{r}\bmd|}^4 dx \le {\Vert |\partial_r\bmd|^2 \Vert}_{\infty} \int_{\Omega} {|\partial_{r}\bmd|}^2 dx \le C {\Vert |\partial_r\bmd|^2 \Vert}_{\infty}. $$
	Using the Sobolev embeding $W^{1, 1}(I) \hookrightarrow L^{\infty}(I), I=(0, R)$,  we have
	\begin{equation}\label{dinfty}
		\begin{aligned}
			{\Vert |\nabla \bmd(x, t)|^2 \Vert}_{\infty(\Omega)}&={\Vert |\partial_r \bmd(x, t)|^2 \Vert}_{\infty(\Omega)}={\Vert |\partial_r \bmd(r, t)|^2 \Vert}_{\infty(I)}\\
			&\le C(\int_0^R {|\partial_{r}\bmd|}^2 dr+2\int_0^R |\partial_{r}\bmd||\partial_{rr}\bmd|dr)\\
			&\le C(\int_0^R {|\partial_{r}\bmd|}^2 dr+(\int_0^R \frac{1}{r} |\partial_{r}\bmd|^2 dr)^{\frac{1}{2}}(\int_0^R r |\partial_{rr}\bmd|^2 dr)^{\frac{1}{2}} )\\
			&\le \epsilon \int_{\Omega} |\partial_{r r}\bmd|^2 dx +C(\epsilon)(\int_0^R {|\partial_{r}\bmd|}^2 dr+\int_{\Omega} \frac{1}{r^2} |\partial_{r}\bmd|^2 dx), 
		\end{aligned}
	\end{equation}
	Therefore,  by using \eqref{BEe-proof-1},  we have
	\begin{equation*}
		\begin{aligned}
			\int_{\Omega} {|\partial_{r}\bmd|}^4 dx &\le {\Vert |\partial_r\bmd|^2 \Vert}_{\infty} \int_{\Omega} {|\partial_{r}\bmd|}^2 dx \le C {\Vert |\partial_r\bmd|^2 \Vert}_{\infty}\\&\le C \epsilon\int_{\Omega} |\partial_{r r}\bmd|^2 dx +C(\epsilon)(\int_0^R {|\partial_{r}\bmd|}^2 dr+\int_{\Omega} \frac{1}{r^2} |\partial_{r}\bmd|^2 dx). 
		\end{aligned}
	\end{equation*}
	Choosing $\epsilon$ small  and using \eqref{BEe-proof-3},  \eqref{BEe-proof-4} yield that
	\begin{equation}\label{BEe-proof-5}
		\int_{0}^{T}\int_{\Omega} {|\partial_{r}\bmd|}^4 dxdt\le \frac{1}{2} \int_{0}^{T}\int_{\Omega} |\partial_{r r}\bmd|^2 dxdt +C,
	\end{equation}
	%	$$ \int_{\Omega} {|\partial_{r}\bmd|}^4 dx \le \frac{1}{2} \int_{\Omega} |\partial_{r r}\bmd|^2 dx +C(\int_0^R {|\partial_{r}\bmd|}^2 dr+\int_{\Omega} \frac{1}{r^2} |\partial_{r}\bmd|^2 dx). $$
	which combines with \eqref{BEe-proof-2} and \eqref{BEe-proof-1} implies that
	\begin{equation}\label{BEe-proof-6}
		\int_{0}^{T}\int_{\Omega} {|\partial_{rr} \bmd|}^2dxdt\leq 2\int_{0}^{T}{\Vert \Delta \bmd+{|\nabla \bmd |}^2\bmd \Vert}_2^2dt+C\leq C.
	\end{equation}
	Therefore, \eqref{d-est} is a consequence of \eqref{BEe-proof-3}, \eqref{BEe-proof-5} and \eqref{BEe-proof-6}.
	Furthermore, \eqref{d-est2} follows from \eqref{dinfty}, \eqref{BEe-proof-4}, \eqref{d-est} and the fact $|\nabla^2\bmd|\leq C\left(|\partial_{rr}\bmd|+|\partial_r\bmd|/r\right).$
\end{proof}
We can further utilize Lemma \ref{d-disp-est} to get the time-independent $L^\infty_tL^4_x$ estimate of $\nabla\bmd$.  
\begin{lemma}
	There exists a positive constant $C$ depending only $\gamma$, $\Omega$, $\|\rho_0\|_{L^\infty}$, $\|\bmu_0\|_{L^2}$ and $\|\nabla\bmd_0\|_{L^2}$  such that
	\begin{equation}\label{est_d_1}
		\sup_{0\le t \le T} \int_{\Omega} |\nabla \bmd|^4 dx+\int_0^T \int_{\Omega} |\nabla \bmd|^2 |\Delta \bmd|^2 dx dt+\int_0^T \int_{\Omega} |\nabla \bmd|^2|\nabla(|\nabla \bmd|)|^2dxdt \le C. 
	\end{equation}
\end{lemma}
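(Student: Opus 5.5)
We derive the estimate from an energy identity for $\int|\nabla\bmd|^4dx$. Since everything is radial and $|\nabla\bmd|=|\partial_r\bmd|$, we work with the one-dimensional form $\eqref{sCEL-sys}_3$, written as $\bmd_t+u\bmd_r=\Delta\bmd+|\bmd_r|^2\bmd$. We differentiate this equation in $r$, take the inner product with $4|\bmd_r|^2\bmd_r\, r$, and integrate over $(0,R)$. Equivalently, we test $\eqref{CEL-sys}_3$ by $-\tdiv(|\nabla\bmd|^2\nabla\bmd)$ and integrate by parts using the Neumann condition $\bmd_r(0)=\bmd_r(R)=0$, so that no boundary terms appear.

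\textbf{Dissipative terms.} The diffusion term produces
\[
\int|\nabla\bmd|^2|\Delta\bmd|^2dx \;+\; c\int|\nabla\bmd|^2|\nabla|\nabla\bmd||^2dx .
\]
This can be organized in two ways. One option is to write $\partial_t|\nabla\bmd|^4$ and use the standard identity $\Delta|\nabla\bmd|^2=2|\nabla^2\bmd|^2+2\nabla\bmd\cdot\nabla\Delta\bmd$. The other is to integrate by parts directly:
\[
-\int\Delta\bmd\cdot\tdiv(|\nabla\bmd|^2\nabla\bmd)\,dx=\int|\nabla\bmd|^2|\Delta\bmd|^2\,dx+\int\Delta\bmd\cdot\nabla\bmd\cdot\nabla|\nabla\bmd|^2\,dx .
\]
In radial variables the second term becomes $\int(\bmd_{rr}+\bmd_r/r)\cdot\bmd_r\,\partial_r|\bmd_r|^2\,dx$. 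Using $\bmd_{rr}\cdot\bmd_r=\tfrac12\partial_r|\bmd_r|^2$, this equals $\tfrac12\int|\partial_r|\bmd_r|^2|^2dx$ plus $\int r^{-1}|\bmd_r|^2\partial_r|\bmd_r|^2dx$. The last piece is $2\pi\int_0^R\partial_r(|\bmd_r|^4/2)\,dr$, which vanishes by the boundary conditions. This radial rigidity is the point of the argument: it removes the geometric angle condition, and all good terms come with favorable signs.

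\textbf{Bad terms.} There are two.

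The first is the harmonic-map nonlinearity, which is bounded by $C\int|\nabla\bmd|^4|\nabla^2\bmd|\,dx$. Since $|\bmd|=1$ gives $|\nabla\bmd|^2\bmd\cdot\bmd_r=0$, some cancellation occurs, but not all. We bound
\[
\int|\nabla\bmd|^4|\nabla^2\bmd|\,dx\le\epsilon\int|\nabla\bmd|^2|\nabla^2\bmd|^2\,dx+C\|\nabla\bmd\|_{L^\infty}^2\int|\nabla\bmd|^4\,dx .
\]
The coefficient $\|\nabla\bmd\|_\infty^2$ is integrable in time by \eqref{d-est2}. Note that $|\nabla^2\bmd|^2\le C(|\bmd_{rr}|^2+|\bmd_r|^2/r^2)$; the term $|\nabla\bmd|^2|\bmd_r|^2/r^2$ must be absorbed as well. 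For this we use the $r^{-1}$ term generated in the dissipation, or the identity $\Delta\bmd=\bmd_{rr}+\bmd_r/r$ together with the cross term computed above.

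The second is the convective term $\int u\,\bmd_r$. After differentiation it gives terms like
\[
\int|\nabla\bmd|^4\,|\nabla\bmu|\,dx \quad\text{or}\quad \int|\bmu|\,|\nabla\bmd|^3|\nabla^2\bmd|\,dx .
\]
We keep it in the second form, without putting a derivative on $u$, since the conserved divergence structure is awkward here. By \eqref{lemma2-2}, $\|u\|_\infty\le\|\nabla\bmu\|_2$, so
\[
\int|\bmu|\,|\nabla\bmd|^3|\nabla^2\bmd|\,dx\le\epsilon\int|\nabla\bmd|^2|\nabla^2\bmd|^2\,dx+C\|\nabla\bmu\|_2^2\,\|\nabla\bmd\|_\infty^2\int|\nabla\bmd|^2\,dx .
\]
The energy estimate \eqref{B-E-estimate} bounds $\int|\nabla\bmd|^2dx$, so this last term is $C\|\nabla\bmu\|_2^2\|\nabla\bmd\|_\infty^2$. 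To control it, we interpolate $\|\nabla\bmd\|_\infty^2\le C\|\nabla\bmd\|_4^{2}$ times lower order quantities, or more simply use $\|\nabla\bmd\|_\infty^4\le C\int|\nabla\bmd|^4\cdot(\cdots)$, and bound by
\[
C\big(\|\nabla\bmu\|_2^2+\|\nabla\bmd\|_\infty^2\big)\Big(1+\int|\nabla\bmd|^4\,dx\Big).
\]

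\textbf{Closing the estimate.} We arrive at
\[
\frac{d}{dt}\int|\nabla\bmd|^4\,dx+(\text{good terms})\le C\big(\|\nabla\bmu\|_2^2+\|\nabla\bmd\|_\infty^2\big)\Big(1+\int|\nabla\bmd|^4\,dx\Big).
\]
Both coefficients are integrable over $(0,T)$ uniformly in $T$, by \eqref{B-E-estimate} and \eqref{d-est2}. Gronwall's inequality then yields \eqref{est_d_1}, with the initial value controlled by $\|\nabla\bmd_0\|_{L^4}\le C\|\nabla\bmd_0\|_{H^1}$.

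\textbf{Main obstacle.} The hard step is the convection term. Keeping time-independence requires using only $\|\nabla\bmu\|_{L^2_tL^2_x}$ and the one-dimensional bound $\|u\|_\infty\le\|\nabla\bmu\|_2$. No higher norms of $\bmu$ are available at this stage, because density bounds are not yet known.
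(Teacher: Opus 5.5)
Your scheme is the paper's. Testing $\eqref{CEL-sys}_3$ with $-\tdiv(|\nabla\bmd|^2\nabla\bmd)$ is the same as testing \eqref{nabula_cel3} with $|\nabla\bmd|^2\nabla\bmd$. The convective term is moved onto $\tdiv(|\nabla\bmd|^2\nabla\bmd)$, giving $C\int|\bmu||\nabla\bmd|^3|\nabla^2\bmd|\,dx$. The harmonic-map term is bounded by $\epsilon\int|\nabla\bmd|^2|\nabla^2\bmd|^2dx+C\|\nabla\bmd\|_\infty^2\|\nabla\bmd\|_4^4$. Gronwall is then closed using \eqref{B-E-estimate}, \eqref{lemma2-2} and \eqref{d-est2}.

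\textbf{The gap: your convective step does not close as written.} After Young's inequality you estimate
\[
\int|\bmu|^2|\nabla\bmd|^4dx\le\|\bmu\|_\infty^2\|\nabla\bmd\|_\infty^2\|\nabla\bmd\|_2^2\le C\|\nabla\bmu\|_2^2\|\nabla\bmd\|_\infty^2 .
\]
At this stage both factors are only known to lie in $L^1(0,T)$, so their product is not controlled. The rescue you propose is not available either:
\begin{itemize}
\item In two dimensions there is no bound of $\|\nabla\bmd\|_\infty$ by $\|\nabla\bmd\|_{L^4}$ and lower-order quantities; every available bound, e.g. \eqref{dinfty}, costs a derivative of $\nabla\bmd$.
\item The inequality $\|\nabla\bmu\|_2^2\|\nabla\bmd\|_\infty^2\le C(\|\nabla\bmu\|_2^2+\|\nabla\bmd\|_\infty^2)(1+\|\nabla\bmd\|_4^4)$ would require $\min\{\|\nabla\bmu\|_2^2,\|\nabla\bmd\|_\infty^2\}\le C(1+\|\nabla\bmd\|_4^4)$, which nothing gives you.
\end{itemize}

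\textbf{The repair} is simply not to discard the $L^4$ structure. By \eqref{lemma2-2},
\[
\int|\bmu|^2|\nabla\bmd|^4dx\le\|\bmu\|_\infty^2\|\nabla\bmd\|_4^4\le\|\nabla\bmu\|_2^2\|\nabla\bmd\|_4^4 .
\]
This is already of Gronwall form and is exactly the paper's bound. With it, your final differential inequality and your conclusion are correct.

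Two smaller remarks.

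First, there is no surviving $r^{-1}$ good term to absorb $|\nabla\bmd|^2|\bmd_r|^2/r^2$; you showed yourself that it vanishes. Your second option does work, however. We have $|\nabla^2\bmd|^2=|\bmd_{rr}|^2+|\bmd_r|^2/r^2$, and the cross term in $\int|\nabla\bmd|^2|\Delta\bmd|^2dx$ equals $\pi[|\bmd_r|^4]_0^R=0$, so the two weighted integrals coincide. The paper avoids this detour by integrating $-\int\Delta\nabla\bmd\cdot|\nabla\bmd|^2\nabla\bmd\,dx$ by parts directly. That gives $\int|\nabla\bmd|^2|\nabla^2\bmd|^2dx+\frac12\int|\nabla|\nabla\bmd|^2|^2dx$, with no boundary term since $\nabla\bmd=0$ on $\partial\Omega$. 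So the radial input that actually matters in this lemma is \eqref{d-est2} together with \eqref{lemma2-2}, not a cross-term cancellation.

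Second, you are right that the constant also depends on $\|\nabla\bmd_0\|_{L^4}$, hence on $\|\nabla\bmd_0\|_{H^1}$. The dependence list in the statement omits this.
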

\begin{proof}
	Applying $\nabla$ on $\eqref{CEL-sys}_3$,    we get that
	\begin{equation}\label{nabula_cel3}
		\nabla \bmd_t -\Delta \nabla \bmd=-\nabla (\bmu \cdot \nabla \bmd)+\nabla(|\nabla \bmd|^2 \bmd). 
	\end{equation}
	Multiplying above by $ 4 |\nabla \bmd|^2 \nabla \bmd$,  integrating by parts and using \eqref{BC} yield that
	\begin{equation}\label{est_d1_proof_1}
		\begin{aligned}
			\frac{d}{dt} \int |\nabla \bmd|^4 dx &+4 \int (|\nabla \bmd|^2|\nabla^2 \bmd|^2+2|\nabla \bmd|^2|\nabla(|\nabla \bmd|)|^2)dx \\
			&= 4 \int |\nabla \bmd|^2 \nabla \bmd \cdot [-\nabla(\bmu \cdot \nabla \bmd)+\nabla (|\nabla \bmd|^2 \bmd)]dx  \\
			&\le C\int |\nabla \bmd|^3|\nabla^2 \bmd||\bmu|dx+C \int|\nabla^2\bmd||\nabla \bmd|^4 dx \\
			&\le 2 \int |\nabla^2\bmd|^2|\nabla \bmd|^2 dx +C\int |\bmu|^2|\nabla \bmd|^4 dx+C\int |\nabla \bmd|^6 dx\\
			&\le 2 \int |\nabla^2\bmd|^2|\nabla \bmd|^2 dx +C(\|\bmu\|_{\infty}^2+\||\nabla\bmd|^2\|_{\infty})\int|\nabla \bmd|^4 dx. 
		\end{aligned}
	\end{equation}
	%		In view of \eqref{lemma2-2},  \eqref{B-E-estimate},  \eqref{dinfty},  \eqref{BEe-proof-3} and \eqref{d-est},  we have
	%		\begin{equation*}
		%			\int_{0}^{T}\|\bmu\|_{\infty}^2dt\leq \int_{0}^{T}\|\nabla\bmu\|_2^2dt\leq C
		%		\end{equation*}
	%	and
	%		\begin{align*}
		%			\int_0^T \lnorm{|\partial_r\bmd|^2}{\infty}{\Omega} dt &\le C\int_0^T \left(\int_{\Omega} |\partial_{r r}\bmd|^2 dx +\int_0^R {|\partial_{r}\bmd|}^2 dr+\int_{\Omega} \frac{1}{r^2} |\partial_{r}\bmd|^2 dx\right)dt\le C. 
		%		\end{align*}
	%		Plugging these into \eqref{est_d1_proof_1} gives that
	%		\begin{align*}
		%			\frac{d}{dt} \int |\nabla \bmd|^4 dx &+4 \int (|\nabla \bmd|^2|\nabla^2 \bmd|^2+2|\nabla \bmd|^2|\nabla(|\nabla \bmd|)|^2)dx \\
		%			&\le  2 \int |\nabla^2\bmd||\nabla \bmd|^2 dx +C(\lnorm{\bmu}{\infty}{\Omega}^2+\lnorm{|\partial_r\bmd|^2}{\infty}{\Omega})\int |\nabla \bmd|^4 dx \\
		%			&\le 2 \int |\nabla^2\bmd||\nabla \bmd|^2 dx +C(\lnorm{\nabla \bmu}{2}{\Omega}^2+\lnorm{|\partial_r\bmd|^2}{\infty}{\Omega})\int |\nabla \bmd|^4 dx. 
		%		\end{align*}
	Therefore,  we can obtain from applying Gronwall's inequality to \eqref{est_d1_proof_1} that
	\begin{equation}\label{est_d1_proof_2}
		\begin{aligned}
			\int_{\Omega} |\nabla \bmd|^4 dx &+2\int_0^t \int_{\Omega} |\nabla \bmd|^2 |\Delta \bmd|^2 dx dt+8\int_0^t \int_{\Omega} |\nabla \bmd|^2|\nabla(|\nabla \bmd|)|^2dxdt  \le C, 
		\end{aligned}
	\end{equation}
	which competes the proof. 
\end{proof}

Let $G$ be the effective flux as
 \begin{equation}\label{G-def}
 	G:=F+\bar{P}=(2\mu+\lambda)\tdiv \bmu-(P-\bar{P}),
 \end{equation}
 where $\bar{P}:= \frac{1}{|\Omega|}\int P dx$ is the integral average of $P$ on $\Omega$. 
 
 Define
 \begin{equation}\label{A-def}
 	A(t)=\left\|\frac{G(t)}{\sqrt{2\mu+\lambda(\rho(t))}}\right\|_{L^2(\Omega)}+\left\|\nabla^2 \bmd(t)\right\|_{L^2(\Omega)},
 \end{equation}
 and
 \begin{equation}
 B(t)=\left\|\sqrt{\rho(t)}\dot{\bmu}(t)\right\|_{L^2(\Omega)}+\left\|\nabla^3\bmd(t)\right\|_{L^2(\Omega)}. 
 \end{equation}
Then, we have the following estimates of $A(t)$ and $B(t)$ in terms of $$R_T:=1+ \sup_{0<t<T}\|\rho\|_{L^\infty},$$ which plays important role in getting the time-independent upper bound of the density. 
\begin{lemma}\label{log-est}
	For any $\alpha\in(0, 1)$,  there is a constant $C(\alpha)$ depending only on $\alpha, \mu, \beta, \gamma$, $\Omega$, $\|\rho_0\|_{L^\infty}$, $\|\bmu_0\|_{H^1}$ and $\|\nabla\bmd_0\|_{H^1}$ such that
	\begin{equation}\label{est_AB}
	\begin{aligned}
		\sup_{0\le t\le T} \ln(e+A^2(t)) \leq CR_T^{\alpha\beta+1}
	\end{aligned}
    \end{equation}
	and 
    \begin{equation}\label{est_AB_another}
	\begin{aligned}
		\sup_{0\le t\le T} (e+A^2(t)) +\int_0^T B(t)^2 dt\le CR_T^{\alpha\beta+1}(R_T+R_T^{\beta-\gamma-1}+R_T^{\gamma-2\beta+1})e^{CR_T^{\alpha\beta+1}}. 
	\end{aligned}
    \end{equation}
	\begin{proof}
		Using radially symmetric property,  we rewrite the momentum equation $\eqref{CEL-sys}_2$ as:
        \begin{equation}\label{eq_dotu}
		\begin{aligned}
			\rho \dot{\bmu}=\nabla G -\text{div}(\nabla\bmd\odot\nabla\bmd-\frac{1}{2}|\nabla\bmd|^2\mathbb{I}_2). 
		\end{aligned}
        \end{equation}
%    where $M(\bmd)=\nabla\bmd\odot\nabla\bmd-\frac{1}{2}|\nabla\bmd|^2\mathbb{I}_2. $
		Multiplying \eqref{eq_dotu}by $\dot{\bmu}$,  integrating the resultant equality over $\Omega$ and using integration by parts,  we obtain that
        \begin{equation}\label{estAB_proof_01}
                \int \rho |\dot{\bmu}|^2dx=-\int G \text{div} \dot{\bmu}dx+\int \nabla\bmd\odot\nabla\bmd:\nabla \dot{\bmu}dx-\frac{1}{2}\int|\nabla\bmd|^2\tdiv{\dot{\bmu}}dx.
        \end{equation}
        Noticing that 
        \begin{equation*}
            \begin{aligned}
                \text{div} \dot{\bmu} =\frac{D}{Dt} \text{div}\bmu +(\text{div} \bmu)^2 -2\partial_ru\frac{u}{r}, 
            \end{aligned}
        \end{equation*}
        one has
        \begin{equation*}
            \begin{aligned}
                -\int G\tdiv \dot{\bmu} dx &=-\int G\frac{D}{Dt} \tdiv \bmu dx -\int G(\tdiv \bmu)^2 dx +2 \int G\partial_ru\frac{u}{r} dx\\
                &=-\int G \frac{D}{Dt}\left(\frac{G+P-\bar{P}}{2\mu+\lambda}\right)dx-\int G\frac{G+P-\bar{P}}{2\mu+\lambda}\tdiv \bmu dx +2 \int G\partial_ru\frac{u}{r} dx\\
                &=-\frac{1}{2} \frac{d}{dt} \int \frac{G^2}{2\mu+\lambda}dx+\frac{1}{2}\int \frac{G^2}{2\mu+\lambda}\text{div}\bmu dx-\frac{1}{2} \int G^2 \frac{\rho \lambda'}{(2\mu + \lambda)^2} \tdiv \bmu dx\\
                &\quad+\gamma\int GP\frac{1}{2\mu+\lambda}\tdiv\bmu dx+\int G\partial_t\bar{P}\frac{1}{2\mu+\lambda}dx-\int G(P-\bar{P})\frac{\rho\lambda'}{(2\mu +\lambda)^2}\text{div}\bmu dx\\
                &\quad-\int \frac{G^2}{2\mu+\lambda}\tdiv\bmu dx-\int \frac{G(P-\bar{P})}{2\mu+\lambda}\tdiv\bmu dx+2 \int G\partial_ru\frac{u}{r} dx\\
                &\leq -\frac{1}{2} \frac{d}{dt} \int \frac{G^2}{2\mu+\lambda}dx+C\int \frac{G^2}{2\mu+\lambda}|\text{div}\bmu| dx+C\int \frac{|G|(|P|+|\bar{P}|)}{2\mu+\lambda}|\text{div}\bmu| dx\\
                &\quad+C\left|\int P\tdiv\bmu dx\cdot\int \frac{G}{2\mu+\lambda}dx\right|,  
            \end{aligned}
        \end{equation*}
    where we have used
   \begin{align*}
   	\left|2 \int G\partial_ru\frac{u}{r} dx\right|&\leq \left|\int G\left(\partial_r u+\frac{u}{r}\right)^2dx\right|=\left|\int G|\tdiv\bmu|^2dx\right|=\left|\int G\frac{G+P-\bar{P}}{2\mu+\lambda}\tdiv\bmu dx\right|\\
   	&\leq \int \frac{G^2}{2\mu+\lambda}|\text{div}\bmu| dx+\int \frac{|G||P-\bar{P}|}{2\mu+\lambda}|\text{div}\bmu| dx 
   \end{align*}
and
\begin{equation*}
	\partial_t\bar{P}=\frac{1}{|\Omega|}\int\partial_t Pdx=-\frac{\gamma-1}{|\Omega|}\int P\tdiv\bmu dx. 
\end{equation*}
%        Observing that
%        \begin{equation*}
%            \begin{aligned}
%                \int F\frac{D}{Dt}\tdiv \bmu dx &= \int F \frac{D}{Dt}(\frac{F+P}{2\mu+\lambda})dx\\
%                &=\frac{1}{2} \frac{d}{dt} \int \frac{F^2}{2\mu+\lambda}dx-\frac{1}{2}\int \frac{F^2}{2\mu+\lambda}\text{div}\bmu dx\\
%                &\quad \enspace -\frac{1}{2} \int F^2 \frac{\rho \lambda'}{(2\mu + \lambda)^2} \tdiv \bmu dx - \int F\rho(\frac{P}{2\mu +\lambda})' \text{div}\bmu dx. 
%            \end{aligned}
%        \end{equation*}
%        Therefore, 
%        \begin{equation*}
%            \begin{aligned}
%                \int \rho |\dot{\bmu}|^2 &=-\int  F \tdiv \dot{\bmu}-\int \text{div}(M(\bmd)) \cdot \dot{\bmu}\\
%                &=- \frac{d}{dt} \int \frac{F^2}{2\mu+\lambda}dx+\int \frac{F^2}{2\mu+\lambda} dx\\
%                &\quad \enspace - \int F^2 \frac{\rho \lambda'}{(2\mu + \lambda)^2} \tdiv \bmu dx +2 \int F\rho(\frac{P}{2\mu +\lambda})' dx \\
%                &\quad \enspace -2\int F|\tdiv \bmu|^2 dx +4 \int F\partial_ru\frac{u}{r} dx-2\int \text{div}(M(d)) \cdot \dot{\bmu}. 
%            \end{aligned}
%        \end{equation*}
%		Then we calculate that
Using integration by parts gives that
		\begin{align*}
			&\int \nabla\bmd\odot\nabla\bmd:\nabla \dot{\bmu}\, dx\\
			&=\frac{d}{dt} \int (\nabla \bmd \odot \nabla \bmd) :\nabla \bmu dx-\int (\nabla \bmd_t \odot \nabla \bmd+\nabla \bmd \odot \nabla \bmd_t) :\nabla \bmu dx -\int \nabla \bmd \odot \nabla \bmd:\nabla (\bmu \cdot \nabla \bmu) dx\\
			&=\frac{d}{dt} \int (\nabla \bmd \odot \nabla \bmd):\nabla \bmu dx -\int [(\Delta \nabla \bmd -\nabla(\bmu\cdot \nabla \bmd) + \nabla (|\nabla \bmd|^2\bmd) ) \odot \nabla \bmd]:\nabla \bmu dx \\
			&\quad-\int [ \nabla \bmd\odot(\Delta \nabla \bmd -\nabla(\bmu\cdot \nabla \bmd) + \nabla (|\nabla \bmd|^2\bmd) ) ]:\nabla \bmu dx\\
			&\quad-\int \nabla \bmd \odot \nabla \bmd: (\bmu \cdot \nabla) \nabla\bmu dx -\int \nabla \bmd \odot \nabla \bmd:\nabla \bmu \cdot \nabla \bmu dx\\
			&=\frac{d}{dt} \int (\nabla \bmd \odot \nabla \bmd):\nabla \bmu dx -\int [(\Delta \nabla \bmd -\nabla\bmu\cdot \nabla \bmd +  |\nabla \bmd|^2\nabla\bmd\odot \nabla \bmd]:\nabla \bmu dx \\
			&\quad -\int [ \nabla \bmd\odot(\Delta \nabla \bmd -\nabla\bmu\cdot \nabla \bmd + \nabla \bmd|\nabla \bmd|^2) ]:\nabla \bmu dx\\
			&\quad+\int\tdiv\bmu \nabla \bmd \odot \nabla \bmd: \nabla\bmu dx-\int \nabla \bmd \odot \nabla \bmd:\nabla \bmu \cdot \nabla \bmu dx\\
			&\leq \frac{d}{dt} \int (\nabla \bmd \odot \nabla \bmd):\nabla \bmu dx+C\int(|\nabla\Delta \bmd|+|\nabla\bmu||\nabla\bmd|+|\nabla \bmd|^3)|\nabla\bmd||\nabla\bmu|dx,
		\end{align*}
	        where we have used 
	        \begin{align*}
	        	&-\int ((\bmu\cdot\nabla)\nabla\bmd \odot \nabla \bmd+\nabla \bmd \odot (\bmu\cdot\nabla)\nabla\bmd) :\nabla \bmu dx-\int \nabla \bmd \odot \nabla \bmd: (\bmu \cdot \nabla) \nabla\bmu dx\\
%	        	&=-\int (u_k \partial_k \partial_i \bmd \cdot \partial_j \bmd+u_k \partial_k \partial_j \bmd \cdot \partial_i \bmd) \partial_j u_i dx+\int \partial_k(\partial_i\bmd\cdot\partial_j\bmd)u_k\partial_j u_i dx\\
%	        	&\quad+\int\tdiv\bmu \nabla \bmd \odot \nabla \bmd: \nabla\bmu dx\\
	        	&=\int\tdiv\bmu \nabla \bmd \odot \nabla \bmd: \nabla\bmu dx.
	        \end{align*}
    Moreover, 
	\begin{align*}
		-\frac{1}{2}\int|\nabla\bmd|^2\tdiv{\dot{\bmu}}dx&=-\frac{1}{2}\frac{d}{dt}\int|\nabla\bmd|^2\tdiv\bmu dx+\int\nabla\bmd_t:\nabla\bmd\tdiv\bmu dx-\frac{1}{2}\int|\nabla\bmd|^2\tdiv(\bmu\cdot\nabla\bmu)dx\\
		&=-\frac{1}{2}\frac{d}{dt}\int|\nabla\bmd|^2\tdiv\bmu dx+\int(\Delta \nabla \bmd -\nabla \bmu\cdot \nabla \bmd +|\nabla \bmd|^2\nabla\bmd):\nabla\bmd\tdiv\bmu dx\\
		&\quad-\frac{1}{2}\int|\nabla\bmd|^2(\tdiv\bmu)^2 dx-\frac{1}{2}\int|\nabla\bmd|^2\partial_iu_j\partial_j u_idx\\
		 &\le -\frac{1}{2}\frac{d}{dt}\int|\nabla\bmd|^2\tdiv\bmu dx +C\int(|\nabla\Delta\bmd|+|\nabla \bmd|^3)|\nabla\bmd||\nabla \bmu|dx\\
		 &\quad+C\int|\nabla\bmd|^2|\nabla\bmu|^2dx. 
	\end{align*}
        Combining all above,  we have
        \begin{equation}\label{estAB_proof_1}
		\begin{aligned}
			&\frac{1}{2}\frac{d}{dt} \left\|\frac{G}{\sqrt{2\mu+\lambda(\rho)}}\right\|_{L^2}^2 +\lnorm{\sqrt{\rho}\dot{\bmu}}{2}{\Omega}^2\\
			&\le \frac{d}{dt}\left(\int (\nabla \bmd\odot \nabla \bmd): \nabla \bmu dx-\frac{1}{2}\int |\nabla \bmd|^2\tdiv\bmu dx\right)+C\int \frac{G^2}{2\mu+\lambda}|\text{div}\bmu| dx\\
			&\quad+C\int \frac{|G|(|P|+|\bar{P}|)}{2\mu+\lambda}|\text{div}\bmu| dx+C\left|\int P\tdiv\bmu dx\cdot\int \frac{G}{2\mu+\lambda}dx\right|\\
            &\quad+C\int|\nabla\Delta \bmd||\nabla\bmd||\nabla\bmu|dx+C\int|\nabla \bmd|^4|\nabla\bmu|dx+C\int|\nabla\bmd|^2|\nabla\bmu|^2dx\\
            &=:\frac{d}{dt}I_0+\sum_{i=1}^6 I_i. 
		\end{aligned}
        \end{equation}
    Each $I_i$ can be estimated as follows. 
     For any $0 <\alpha<1$,  it follows from H\"{o}lder's inequality and \eqref{lemma2-3} that
    \begin{equation}\label{I1-est}
    	\begin{aligned}
    		I_1&\le  \slnorm{\frac{G^2}{2\mu +\lambda}}{2} \slnorm{\tdiv\bmu}{2}
    		\\
    		&\le \slnorm{\frac{G}{\sqrt{2\mu +\lambda}}}{2}^{1-\alpha} \slnorm{G}{\frac{2(1+\alpha)}{\alpha}}^{1+\alpha} \slnorm{\sqrt{2\mu +\lambda}\tdiv\bmu}{2}\\
    		&\le \slnorm{\frac{G}{\sqrt{2\mu +\lambda}}}{2}^{1-\alpha} \slnorm{G}{2}^{\alpha}\|G\|_{H^1}\slnorm{\sqrt{2\mu +\lambda}\tdiv\bmu}{2}\\
    		&\le R_T^{\frac{\alpha \beta}{2}} \slnorm{\frac{G}{\sqrt{2\mu +\lambda}}}{2}R_T^{\frac{1}{2}} (\slnorm{\sqrt{\rho }\dot{\bmu}}{2} +\slnorm{\nabla^3 \bmd}{2}+\slnorm{\nabla^2 \bmd}{2})\slnorm{\sqrt{2\mu +\lambda}\tdiv \bmu}{2}\\
    		&\quad+R_T^{\frac{\alpha \beta}{2}} \slnorm{\frac{G}{\sqrt{2\mu +\lambda}}}{2}(1+R_T^{\frac{\beta-\gamma}{2}})\|\sqrt{2\mu+\lambda}\tdiv\bmu\|_{L^2}^2\\
    		&\le \frac{1}{4} \slnorm{\sqrt{\rho }\dot{\bmu}}{2}^2+\frac{1}{16}\slnorm{\nabla^3 \bmd}{2}^2+C\slnorm{\nabla^2 \bmd}{2}\\
    		&\quad+C(\alpha) R_T^{\alpha \beta +1} \|\frac{G}{\sqrt{2\mu +\lambda}}\|_{L^2}^2 \slnorm{\sqrt{2\mu +\lambda}\tdiv \bmu}{2}^2\\
    		&\quad+C(R_T+R_T^{\beta-\gamma-1})\slnorm{\sqrt{2\mu +\lambda}\tdiv \bmu}{2}^2. 
    	\end{aligned}
    \end{equation}
    where we have used the following facts
    \begin{equation*}
    	\begin{aligned}
    		&\slnorm{G}{\frac{2(1+\alpha)}{\alpha}}\leq C\|G\|_{L^2}^\frac{\alpha}{1+\alpha}\|G\|_{H^1}^\frac{1}{1+\alpha}, \\
    		&\|G\|_{L^2}\leq R_T^\frac{\beta}{2}\left\|\frac{G}{\sqrt{2\mu+\lambda}}\right\|_{L^2}, \\
    		&|\bar{G}|\leq (1+R_T^{\frac{\beta-\gamma}{2}})\|\sqrt{2\mu+\lambda}\tdiv\bmu\|_{L^2}, \\
    		&\slnorm{\nabla G}{2}
    		\le R_T^\frac{1}{2} \slnorm{\sqrt{\rho} \dot{\bmu}}{2} +\slnorm{\nabla \bmd}{4}\slnorm{\nabla^2 \bmd}{4}
    	\end{aligned}
    \end{equation*}
	and
	\begin{equation}\label{est_G_H1}
		\begin{aligned}
			\|G\|_{H^1}&\leq \|G-\bar{G}\|_{L^2}+|\bar{G}|+\|\nabla G\|_{L^2}\leq |\bar{G}|+C\|\nabla G\|_{L^2}\\
		&\le CR_T^\frac{1}{2}( \slnorm{\sqrt{\rho} \dot{\bmu}}{2} +\slnorm{\nabla^3 \bmd}{2} +\slnorm{\nabla^2 \bmd}{2})+C(1+R_T^{\frac{\beta-\gamma}{2}})\|\sqrt{2\mu+\lambda}\tdiv\bmu\|_{L^2}. 
		\end{aligned}
	\end{equation}

    Similarly,  we have the following estimate of $I_2$:
    \begin{equation}
    	\begin{aligned}
    		I_2 &\le \slnorm{G}{2+4\gamma/\beta} \slnorm{\frac{1+P}{(2\mu +\lambda)^\frac{3}{2}}}{2+\frac{\beta}{\gamma}} \slnorm{\sqrt{2\mu+\lambda} \tdiv\bmu}{2}\\
    		&\le \hnorm{G}{1}{\Omega} (1+R_T^{\frac{\gamma-2\beta}{2}}) \slnorm{\sqrt{2\mu+\lambda} \tdiv\bmu}{2}\\
    		&\le CR_T^\frac{1}{2}( \slnorm{\sqrt{\rho} \dot{\bmu}}{2} +\slnorm{\nabla^3 \bmd}{2} +\slnorm{\nabla^2 \bmd}{2})(1+R_T^{\frac{\gamma-2\beta}{2}}) \slnorm{\sqrt{2\mu+\lambda} \tdiv\bmu}{2}\\
    		&\quad+C(1+R_T^{\frac{\beta-\gamma}{2}})(1+R_T^{\frac{\gamma-2\beta}{2}})\|\sqrt{2\mu+\lambda}\tdiv\bmu\|_{L^2}^2\\
%    		& \le (\slnorm{\nabla F}{2}+|\bar{F}|)(1+R_T^{\frac{\gamma-2\beta}{2}}) \slnorm{\sqrt{2\mu+\lambda} \tdiv\bmu}{2}\\
%    		&\le (\slnorm{\nabla F}{2}+\slnorm{\rho}{\beta}^{\frac{\beta}{2}}\slnorm{\sqrt{\mu+\lambda} \tdiv\bmu}{2})(1+R_T^{\frac{\gamma-2\beta}{2}}) \slnorm{\sqrt{2\mu+\lambda} \tdiv\bmu}{2}\\
%    		&\le \slnorm{\nabla F}{2}(1+R_T^{\frac{\gamma-2\beta}{2}}) \slnorm{\sqrt{\mu+\lambda} \tdiv\bmu}{2} +\slnorm{\rho}{\beta}^{\frac{\beta}{2}}(1+R_T^{\frac{\gamma-2\beta}{2}})\slnorm{\sqrt{\mu+\lambda} \tdiv\bmu}{2}^2\\
%    		&\le  \frac{1}{8} (\slnorm{\sqrt{\rho }\dot{\bmu}}{2}^2+\slnorm{\nabla^3 \bmd}{2}^2+\slnorm{\nabla^2 \bmd}{2}^2) \\
%    		&\quad+(R_T(1+R_T^{\frac{\gamma-2\beta}{2}})^2+\slnorm{\rho}{\beta}^{\frac{\beta}{2}}(1+R_T^{\frac{\gamma-2\beta}{2}}) )\slnorm{\sqrt{\mu+\lambda} \tdiv\bmu}{2}^2\\
%    		&\le  \frac{1}{8} (\slnorm{\sqrt{\rho }\dot{\bmu}}{2}^2+\slnorm{\nabla^3 \bmd}{2}^2+\slnorm{\nabla^2 \bmd}{2}^2) \\
%    		&\quad+C R_T ((1+R_T^{\frac{\gamma-2\beta}{2}})^2+\slnorm{\rho}{\beta}^{\beta} R^{-2})\slnorm{\sqrt{\mu+\lambda} \tdiv\bmu}{2}^2 \\
    		&\le \frac{1}{4} \slnorm{\sqrt{\rho }\dot{\bmu}}{2}^2+\frac{1}{16}\slnorm{\nabla^3 \bmd}{2}^2+C\slnorm{\nabla^2 \bmd}{2}^2\\
    		&\quad+C R_T (1+R_T^{\beta-\gamma-2} +R_T^{\gamma-2\beta})\slnorm{\sqrt{2\mu+\lambda} \tdiv\bmu}{2}^2.  
%    		&\le  \frac{1}{8} (\slnorm{\sqrt{\rho }\dot{\bmu}}{2}^2+\slnorm{\nabla^3 \bmd}{2}^2+\slnorm{\nabla^2 \bmd}{2}^2) +C R_T (R_T^{\alpha \beta} +R_T^{\gamma-2\beta})\slnorm{\sqrt{\mu+\lambda} \tdiv\bmu}{2}^2. 
    	\end{aligned}
    \end{equation}
Since $\int\tdiv\bmu\, dx=0$,  we can obtain from the definition of $G$,  the H\"{o}lder inequality that
\begin{equation}
	\begin{aligned}
		I_3&=C\left|\int(P-\bar{P})\tdiv\bmu dx\cdot\int\frac{P-\bar{P}}{2\mu+\lambda}dx\right|\\
		&=C\left|\int(G-(2\mu+\lambda)\tdiv\bmu)\tdiv\bmu dx\right|\left|\int\frac{P-\bar{P}}{2\mu+\lambda}dx\right|\\
		&\leq C\|G\|_{L^2}\|\sqrt{2\mu+\lambda}\tdiv\bmu\|_{L^2}+C\|\sqrt{2\mu+\lambda}\tdiv\bmu\|_{L^2}^2\\
		&\leq \frac{1}{4} \slnorm{\sqrt{\rho }\dot{\bmu}}{2}^2+\frac{1}{16} \slnorm{\nabla^3 \bmd}{2}^2+C\slnorm{\nabla^2 \bmd}{2}^2+CR_T(1+R_T^{\frac{\beta-\gamma-2}{2}})\slnorm{\sqrt{2\mu+\lambda} \tdiv\bmu}{2}^2.
	\end{aligned}
\end{equation}
    It follows from the Young and Sobolev inequality,  \eqref{est_d_1} and \eqref{idd} that
    \begin{equation}
    	\begin{aligned}
    		I_4&\leq\frac{1}{16}\|\nabla^3\bmd\|_{L^2}^2+C \|\nabla \bmu\|_2^2\|\nabla\bmd\|_{L^\infty}^2\\
    		&\leq \frac{1}{16}\|\nabla^3\bmd\|_{L^2}^2+C\|\tdiv \bmu\|_2^2\|\nabla\bmd\|_{L^4}\|\nabla^2\bmd\|_{L^4}\\
    		&\leq \frac{1}{16}\|\nabla^3\bmd\|_{L^2}^2+C\|\tdiv \bmu\|_2\|\sqrt{2\mu+\lambda}\tdiv\bmu\|_{L^2}(\slnorm{\nabla^2 \bmd}{2}^{\frac{1}{2}}\slnorm{\nabla^3 \bmd}{2}^{\frac{1}{2}}+\slnorm{\nabla^2 \bmd}{2})\\
    		&\leq  \frac{1}{8}\|\nabla^3\bmd\|_{L^2}^2+C\|\nabla^2\bmd\|_{L^2}^2+C\left\|\frac{G}{\sqrt{2\mu+\lambda}}\right\|_{L^2}^2\|\sqrt{2\mu+\lambda}\tdiv\bmu\|_{L^2}^2\\
    		&\quad+C(R_T^{\gamma-2\beta}+1)\|\sqrt{2\mu+\lambda}\tdiv\bmu\|_{L^2}^2, 
    	\end{aligned}
    \end{equation}
where we have used 
\begin{equation}\label{ellp}
\|\tdiv\bmu\|_{L^2}^2\leq \frac{1}{\mu}\left\|\frac{G}{\sqrt{2\mu+\lambda}}\right\|_{L^2}^2+CR_T^{\gamma-2\beta}+C. 
\end{equation}

    Similarly,  
    \begin{equation}
    	\begin{aligned}
    		I_5&\leq C\|\nabla\bmd\|_{L^\infty}^2\|\nabla\bmd\|_{L^4}^2\|\nabla\bmu\|_{L^2}\\
    		&\leq C(\slnorm{\nabla^2 \bmd}{2}^{\frac{1}{2}}\slnorm{\nabla^3 \bmd}{2}^{\frac{1}{2}}+\slnorm{\nabla^2 \bmd}{2})\|\sqrt{2\mu+\lambda}\tdiv\bmu\|_{L^2}\\
    		&\leq\frac{1}{16}\|\nabla^3\bmd\|_{L^2}^2+C\|\nabla^2\bmd\|_{L^2}^2+C\|\sqrt{2\mu+\lambda}\tdiv\bmu\|_{L^2}^2. 
    	\end{aligned}
    \end{equation}
    and
    \begin{equation}\label{I6-est}
    	\begin{aligned}
    		I_6&\leq C\|\nabla\bmd\|_{L^\infty}^2\|\nabla\bmu\|_{L^2}^2\\
    		&\leq \frac{1}{16}\|\nabla^3\bmd\|_{L^2}^2+C\|\nabla^2\bmd\|_{L^2}^2+C\left\|\frac{G}{\sqrt{2\mu+\lambda}}\right\|_{L^2}^2\|\sqrt{2\mu+\lambda}\tdiv\bmu\|_{L^2}^2\\
    		&\quad+C(R_T^{\gamma-2\beta}+1)\|\sqrt{2\mu+\lambda}\tdiv\bmu\|_{L^2}^2. 
    	\end{aligned}
    \end{equation}
	Plugging estimates of $I_1$-$I_6$ into \eqref{estAB_proof_1} yields that
\begin{equation}\label{estAB_proof_4}
	\begin{aligned}
		&\frac{d}{dt}\left(\frac{1}{2}\left\|\frac{G}{\sqrt{2\mu+\lambda(\rho)}}\right\|_{L^2}^2-I_0\right)  +\frac{1}{4}\lnorm{\sqrt{\rho}\dot{\bmu}}{2}{\Omega}^2\\
		&\leq \frac{7}{16}\|\nabla^3\bmd\|_{L^2}^2+C R_T (1+R_T^{\beta-\gamma-2} +R_T^{\gamma-2\beta})(\slnorm{\sqrt{2\mu+\lambda} \tdiv\bmu}{2}^2+\|\nabla^2\bmd\|_{L^2}^2)\\
		&\quad+CR_T^{\alpha\beta+1}\slnorm{\sqrt{2\mu+\lambda} \tdiv\bmu}{2}^2\left\|\frac{G}{\sqrt{2\mu+\lambda}}\right\|_{L^2}^2.
	\end{aligned}
\end{equation}
% Notice that under the radially symmetric assumptions , we can derive
% \begin{equation}
% 	\begin{aligned}
% 		(\nabla \bmd \odot \nabla \bmd ) : \nabla \bmu & = \partial_i d_k \partial_j d_k \partial_i u_j = \partial_r d_k \partial_r d_k \frac{x_i x_j}{r^2} \partial_i(u(r)\frac{x_j}{r})\\
% 		&=\partial_r d_k \partial_r d_k (\frac{x_i x_j}{r^2} (\partial_r u \frac{x_i x_j}{r^2} + u(\frac{\delta_{ij}}{r}-\frac{x_i x_j}{r^3} )))\\
% 		&=\partial_r d_k \partial_r d_k (\partial_r u + \frac{1}{r}u)=|\nabla \bmd|^2 \tdiv \bmu. 
% 	\end{aligned}
% \end{equation}
On the other hand,  multiplying \eqref{nabula_cel3} by $-\nabla \Delta \bmd $ and integrating the resultant over $\Omega$,  it follows from integration by parts and \eqref{BC} that
	\begin{equation}\label{estAB_proof_2_o}
		\begin{aligned}
			&\frac{1}{2}\frac{d}{dt} \int |\nabla^2\bmd|^2 dx +\int |\nabla \Delta \bmd|^2 dx  \\
			&= \int \nabla(\bmu \cdot \nabla \bmd) \cdot \nabla \Delta \bmd dx-\int \nabla(|\nabla \bmd|^2\bmd)\cdot \nabla \Delta \bmd dx \\
			&=\int (\nabla \bmu \cdot \nabla \bmd) \cdot \nabla \Delta \bmd dx+\int u_i \partial_i \partial_j \bmd \cdot \partial_j \partial_k \partial_k \bmd dx-\int \nabla(|\nabla \bmd|^2\bmd)\cdot \nabla \Delta \bmd dx \\
			&=\int (\nabla \bmu \cdot \nabla \bmd) \cdot \nabla \Delta \bmd dx - \int \partial_ku_i \partial_i \partial_j \bmd \cdot \partial_j \partial_k \bmd dx .
		\end{aligned}
	\end{equation}
	Using elliptic estimates with Neumann boundary conditions,  i. e.  \eqref{elliptic_est2},  we have 
	\begin{equation*}
		\begin{aligned}
			\int|\nabla^3 \bmd|^2 dx\le C \int |\nabla \Delta \bmd|^2 dx , 
		\end{aligned}
	\end{equation*}
	which combining with \eqref{estAB_proof_2_o} yields that
	\begin{equation}\label{estAB_proof_2}
		\begin{aligned}
			&\frac{d}{dt} \int |\nabla^2\bmd|^2 dx +\int |\nabla^3 \bmd|^2 dx\\
			&\le C\int |\nabla \bmu| |\nabla \bmd||\nabla \Delta \bmd| dx +C\int |\nabla \bmu| |\nabla^2 \bmd|^2 dx \\
			&\quad+C\int |\nabla \bmd||\nabla^2\bmd||\nabla \Delta \bmd| dx+C\int |\nabla \bmd|^3|\nabla \Delta \bmd| dx=:\sum_{i=1}^4J_i. 
		\end{aligned}
	\end{equation}
	It follows from the Young and Sobolev inequality,  \eqref{est_d_1} and \eqref{ellp} that
	\begin{equation}
		\begin{aligned}
			J_1&\leq\frac{1}{16}\|\nabla^3\bmd\|_{L^2}^2+C \|\nabla \bmu\|_2^2\|\nabla\bmd\|_{L^\infty}^2\\
			&\leq \frac{1}{16}\|\nabla^3\bmd\|_{L^2}^2+\|\nabla \bmu\|_2^2\|\nabla\bmd\|_{L^4}\|\nabla^2\bmd\|_{L^4}\\
			&\leq \frac{1}{16}\|\nabla^3\bmd\|_{L^2}^2+\|\nabla \bmu\|_2^2(\slnorm{\nabla^2 \bmd}{2}^{\frac{1}{2}}\slnorm{\nabla^3 \bmd}{2}^{\frac{1}{2}}+\slnorm{\nabla^2 \bmd}{2})\\
			&\leq \frac{1}{8}\|\nabla^3\bmd\|_{L^2}^2+C\|\nabla^2\bmd\|_{L^2}^2+C(R_T^{\gamma-2\beta}+1+ \slnorm{\frac{G}{\sqrt{2\mu+\lambda}}}{2}^2)\|\sqrt{2\mu+\lambda}\tdiv\bmu\|_{L^2}^2. 
		\end{aligned}
	\end{equation}
	\begin{equation}
		\begin{aligned}
			J_2&\leq \|\nabla\bmu\|_{L^2}\|\nabla^2\bmd\|_{L^4}^2\leq \|\sqrt{2\mu+\lambda}\tdiv\bmu\|_{L^2}(\|\nabla^2\bmd\|_{L^2}\|\nabla^3\bmd\|_{L^2}+\|\nabla^2\bmd\|_{L^2}^2)\\
			&\leq\frac{1}{16}\|\nabla^3\bmd\|_{L^2}^2+C\|\nabla^2\bmd\|_{L^2}^2\|\sqrt{2\mu+\lambda}\tdiv\bmu\|_{L^2}^2+C\|\nabla^2\bmd\|_{L^2}^4+C\|\sqrt{2\mu+\lambda}\tdiv\bmu\|_{L^2}^2, 
		\end{aligned}
	\end{equation}
	\begin{equation}
		\begin{aligned}
			J_3&\leq C\|\nabla\bmd\|_{L^4}\|\nabla^2\bmd\|_{L^4}\|\nabla^3\bmd\|_{L^2}\\
			&\leq C(\|\nabla^2\bmd\|_{L^2}^\frac{1}{2}\|\nabla^3\bmd\|_{L^2}^\frac{1}{2}+\|\nabla^2\bmd\|_{L^2})\|\nabla^3\bmd\|_{L^2}\\
			&\leq\frac{1}{16}\|\nabla^3\bmd\|_{L^2}^2+C\|\nabla^2\bmd\|_{L^2}^4+C\|\nabla^2\bmd\|_{L^2}^2. 
		\end{aligned}
	\end{equation}
	\begin{equation}
		\begin{aligned}
			J_4\leq\frac{1}{16}\|\nabla^3\bmd\|_{L^2}^2+C\|\nabla\bmd\|_{L^6}^6\leq \frac{1}{16}\|\nabla^3\bmd\|_{L^2}^2+C\|\nabla\bmd\|_{L^2}^2\|\nabla^2\bmd\|_{L^2}^4\leq \frac{1}{16}\|\nabla^3\bmd\|_{L^2}^2+C\|\nabla^2\bmd\|_{L^2}^4.
		\end{aligned}
	\end{equation}
    Plugging the above estimates of $J_1$-$J_4$ into \eqref{estAB_proof_2} yields that
    \begin{equation}\label{estAB_proof_3}
        \begin{aligned}
         &\frac{d}{dt} \int |\nabla^2\bmd|^2 dx +\frac{11}{16}\int |\nabla^3 \bmd|^2 dx\\
		 &\le  C\slnorm{\nabla^2 \bmd}{2}^2 +C\slnorm{\nabla^2 \bmd}{2}^4 +C \slnorm{\frac{G}{\sqrt{2\mu+\lambda}}}{2}^2\|\sqrt{2\mu+\lambda}\tdiv\bmu\|_{L^2}^2\\
		 &\quad+C(R_T^{\gamma-2\beta}+1) \|\sqrt{2\mu+\lambda}\tdiv\bmu\|_{L^2}^2. 
        \end{aligned}
    \end{equation}
    which adding with \eqref{estAB_proof_4} and using the definitions of $A,  B$ gives
%    \begin{equation}
%        \begin{aligned}
%          &\frac{d}{dt}\left(  \frac{1}{2}\left\|\frac{G}{\sqrt{2\mu+\lambda(\rho)}}\right\|_{L^2}^2+\|\nabla^2\bmd\|_{L^2}^2 \right)
%		   +\frac{1}{4}(\lnorm{\sqrt{\rho}\dot{\bmu}}{2}{\Omega}^2+\|\nabla^3\bmd\|_{L^2}^3)\\
%          &\leq \frac{d}{dt} I_0+ C R_T (1+R_T^{\beta-\gamma-2} +R_T^{\gamma-2\beta})(\slnorm{\sqrt{2\mu+\lambda} \tdiv\bmu}{2}^2+\|\nabla^2\bmd\|_{L^2}^2)\\
%		&+CR_T^{\alpha\beta+1}(\slnorm{\frac{G}{\sqrt{2\mu+\lambda}}}{2}^2 + \slnorm{\nabla^2 \bmd}{2}^2)(\slnorm{\sqrt{2\mu+\lambda} \tdiv\bmu}{2}^2+\slnorm{\nabla^2 \bmd}{2}^2). 
%        \end{aligned}
%    \end{equation}
%	bstituting the definitions of  $A, B$ that 
	\begin{equation}\label{estAB_proof_6}
		\begin{aligned}
			&\frac{1}{2} \frac{d}{dt}(e+A^2(t)) + \frac{1}{4} B(t)^2 \\
			&\leq \frac{d}{dt} I_0 +CR_T (1+R_T^{\beta-\gamma-2} +R_T^{\gamma-2\beta})(\slnorm{\sqrt{2\mu+\lambda} \tdiv\bmu}{2}^2+\|\nabla^2\bmd\|_{L^2}^2)\\
			&\quad+CR_T^{\alpha\beta+1}(\slnorm{\sqrt{2\mu+\lambda} \tdiv\bmu}{2}^2+\|\nabla^2\bmd\|_{L^2}^2)(e+A^2(t)). 
		\end{aligned}
	\end{equation}
	Integrating \eqref{estAB_proof_6} in time from $0$ to $t$ and using \eqref{B-E-estimate},  \eqref{d-est} give that
	\begin{equation}\label{estAB_proof_7}
		\begin{aligned}
			&\frac{1}{2} (e+A^2(t)) +\frac{1}{4} \int_0^t B(t)^2\\
			& \leq I_0(t)-I_0(0) +\frac{1}{2}A^2(0) +CR_T (1+R_T^{\beta-\gamma-2} +R_T^{\gamma-2\beta})\\
			&\quad+CR_T^{\alpha\beta+1}\int_{0}^{t}(\slnorm{\sqrt{2\mu+\lambda} \tdiv\bmu}{2}^2+\|\nabla^2\bmd\|_{L^2}^2)(e+A(\tau)^2)dt\\
			&\leq \frac{1}{4} A^2(t) +CR_T (1+R_T^{\beta-\gamma-2} +R_T^{\gamma-2\beta})\\
			&\quad+CR_T^{\alpha\beta+1}\int_{0}^{t}(\slnorm{\sqrt{2\mu+\lambda} \tdiv\bmu}{2}^2+\|\nabla^2\bmd\|_{L^2}^2)(e+A(\tau)^2)dt. 
		\end{aligned}
	\end{equation}
	where we have used \eqref{idd} and \eqref{ellp} to yield the following estimate of $I_0$ 
	\begin{equation}\label{I_0-est}
			|I_0|\leq C\slnorm{\nabla \bmd}{4}^2 \slnorm{\tdiv \bmu}{2} \leq \frac{\mu}{4} \slnorm{\tdiv \bmu}{2}^2 +C\leq \frac{1}{4}\slnorm{\frac{G}{\sqrt{2\mu+\lambda}}}{2}^2 +C R_T^{\gamma-2\beta}+C R_T. 
	\end{equation}
	Applying Gronwall's inequality \eqref{integration-type-gronwall} to \eqref{estAB_proof_7} and using \eqref{B-E-estimate},  \eqref{d-est} imply that
	\begin{equation}\label{estAB_proof_10}
		\begin{aligned}
			 e+A^2(t) \leq CR_T (1+R_T^{\beta-\gamma-2} +R_T^{\gamma-2\beta}) e^{CR_T^{\alpha\beta+1}}
		\end{aligned}
	\end{equation}
	holds for any $0\leq t\leq T$,  which gives
	\begin{equation}\label{estAB_proof_11}
		\begin{aligned}
			\sup_{0\leq t \leq T} \ln (e+A^2(t)) &\leq \ln(CR_T (1+R_T^{\beta-\gamma-2} +R_T^{\gamma-2\beta}))+ CR_T^{\alpha\beta+1}\leq CR_T^{\alpha\beta+1},
		\end{aligned}
	\end{equation}
	where the constant $C$ depends on $\mu, \gamma, \beta, \nabla \bmd_0, \nabla \bmu_0, \rho_0$ and $\alpha$, but is independent to $T$. 

Finally,  plugging \eqref{estAB_proof_10} into \eqref{estAB_proof_7} yields \eqref{est_AB_another}. 
	\end{proof}
\end{lemma}
Now, we can prove the time-independent upper bounds of the density. Following the ideas in \cite{HuangXDLiJ2016,HuangSu2024}, we will reformulate the system \eqref{sCEL-sys} to a transport type equation and employ Zlotnik's inequality in Lemma \ref{Zlotnik} to this new formulation.  To do so, we first define 
\begin{equation}\label{sita_def}
	\theta(\rho)=2\mu \ln\rho+\frac{1}{\beta}(\rho^\beta-1), 
\end{equation}
and
\begin{equation}\label{h_def}
	h(r, t)=\frac{1}{2}{|\partial_r \bmd|}^2+\int_R^r \frac{{|\partial_s \bmd|}^2}{s}ds. 
\end{equation}
By  using $\eqref{sCEL-sys}_1$,  we have 
\begin{equation}\label{S3_linear_eq_1}
	\theta_t+u\partial_r \theta+G+P-\bar{P}=0. 
\end{equation}
Next,  thanks to the definition of $G(r, t)$ and $h(r, t)$ in \eqref{G-def} and \eqref{h_def} respectively,  we can rewrite $\eqref{sCEL-sys}_2$ as
\begin{equation}\label{S3_momentum_eq}
	(\rho u)_t+\partial_r(\rho u^2)+\frac{1}{r}\rho u^2=\partial_rG-\partial_rh. 
\end{equation}
Integrating \eqref{S3_momentum_eq} over $(R, r)$ gives $$\frac{d}{dt} \int_R^r\rho u ds +\rho u^2 + \int_R^r\frac{\rho u^2}{s}ds=G(r, t)-G(R, t)-h(r, t). $$
It should be noticed that $G(R, t)\neq 0$ due to the Dirichlet boundary condition for the velocity $\bmu$, but $h(R, t)=0$ due to the Neumann boundary condition for the director $\bmd$.  

Let
\begin{equation}\label{kexi_yita_def}
	\xi(r,t)=\int_R^r\rho u ds, \quad\eta(r,t)=\rho u^2 + \int_R^r\frac{\rho u^2}{s}ds. 
\end{equation}
Then \eqref{S3_momentum_eq} reduce to  $$\xi_t+ \eta -G+G(R, t)+h=0, $$ which adding to \eqref{S3_linear_eq_1} gives 
\begin{equation}\label{S3_linear_eq_2}
	(\theta +\xi)_t+u \partial_r(\theta +\xi)+\int_R^r\frac{\rho u^2}{s}ds +P-\bar{P}+G(R, t)+h=0. 
\end{equation}
Define
\begin{equation}\label{yg_defin}
	y=\theta(\rho), \quad g(y)=-P(\theta^{-1}(y)),
\end{equation}
 we can further rewrite \eqref{S3_linear_eq_2} into 
\begin{equation}\label{Zlot_form}
	\frac{D}{Dt} y=g(y) -\frac{D}{Dt}b-\frac{d}{dt}Z, 
\end{equation}
with
\begin{equation}\label{b_defin_inZ}
	\begin{aligned}
		b=\xi+\int_0^t\left(G(R, \tau)-\bar{P}(\tau)\right)d\tau, \quad Z=\int_0^t \int_R^r\frac{\rho u^2}{s}dsd\tau  +\int_0^t h d\tau. 
	\end{aligned}
\end{equation}
The following time-independent upper bounds of the density follows from applying Zlotnik's inequality to \eqref{Zlot_form}.
\begin{proposition}\label{time-independent upperbdd_rho}
	Let $\beta,\gamma$ satisfy \eqref{TH1. 1-1}.  Then there is a constant $C>0$, depending only on $\alpha, \mu, \beta, \gamma$, $\Omega$, $\|\rho_0\|_{L^\infty}$, $\|\bmu_0\|_{H^1}$ and $\|\nabla\bmd_0\|_{H^1}$ but not on time $T$,  such that
	\begin{equation}\label{rho_upperbdd}
		\begin{aligned}
			\Vert \rho \Vert_{L^{\infty} L^{\infty}} \le C. 
		\end{aligned}
	\end{equation}
\end{proposition}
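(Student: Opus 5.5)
We apply Lemma \ref{Zlotnik} along particle paths to \eqref{Zlot_form}. Fix $x_0$ and let $X(t)$ solve $\dot X=u(X,t)$, $X(0)=x_0$; radial symmetry gives $|X|\in[0,R]$ for all time. Along this path \eqref{Zlot_form} becomes
\[
\frac{d}{dt}\bigl(y+b+Z\bigr)(X(t),t)=g(y)+\partial_tZ\,\text{-type correction},
\]
so we set $\tilde y=y+b+Z$ and write $\tilde y'=g(y)$. More cleanly, $y'=g(y)+h'(t)$ with $h(t)=-b(X(t),t)-Z(X(t),t)$ up to the transport correction $u\partial_rZ$. To avoid that correction we will not split $Z$: the term $\int_R^r\rho u^2/s\,ds$ and $h$ are nonpositive or nonnegative pieces whose signs we track. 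Indeed $\int_R^r\rho u^2/s\,ds\le 0$ for $r<R$, and it enters \eqref{S3_linear_eq_2} with a plus sign, so it can be dropped favorably (it only lowers $D_t(\theta+\xi)$). Similarly $-h=-\frac12|\partial_r\bmd|^2+\int_r^R|\partial_s\bmd|^2/s\,ds$ has an unfavorable positive part $\int_r^R|\partial_s\bmd|^2 s^{-1}ds$. This is bounded in $L^1_t$ uniformly in $r$ by \eqref{BEe-proof-3}, since $\int_0^R|\partial_s\bmd|^2s^{-1}ds=\frac1{2\pi}\int|\nabla\bmd|^2r^{-2}dx$. Likewise $-G(R,t)+\bar P$ must be controlled. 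Hence
\[
\frac{D}{Dt}(\theta+\xi)\le g(\theta)+\Bigl(\bar P-G(R,t)+\int_0^R\frac{|\partial_s\bmd|^2}{s}ds\Bigr),
\]
and we apply Lemma \ref{Zlotnik} to $\theta+\xi$ along paths, with $h'$ equal to the bracket and $\xi$ absorbed as a bounded perturbation.

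The key steps, in order, are as follows.

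First, $|\xi|\le\int_0^R\rho|u|\,ds\le C\|u\|_\infty\le C\|\nabla\bmu\|_{L^2}$ by \eqref{lemma2-2}. By \eqref{ellp} and \eqref{est_AB}, this gives $|\xi|\le C(\ln$-bound$)$, that is $\|\nabla\bmu\|_{L^2}\le C(e+A^2)^{1/2}+CR_T^{(\gamma-2\beta)/2}$. That is too big, so we instead bound $\xi$ using $\int\rho|u|ds\le(\int\rho\, ds)^{1/2}(\int\rho u^2 ds)^{1/2}$ with the weight $1/r$. The $\rho$-integral is controlled by $\|\rho\|_\infty^{1/2}$, and $\int_0^R\rho u^2\,ds\le\|u\|_\infty\int\rho|u|/s\ldots$. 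We will aim for $|\xi|\le CR_T^{\epsilon}(1+\|\nabla\bmu\|_{L^2}^{\delta})$, with the power of $R_T$ below $\beta$, using \eqref{est_AB} with $\alpha$ small.

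Second, we bound $G(R,t)$. Integrating the momentum identity $\xi_t+\eta-G+G(R,t)+h=0$ over $r\in(0,R)$ against a weight expresses $G(R,t)$ through $\int G\,dr$, $\frac{d}{dt}\int\xi\,dr$, $\eta$ and $h$. Then $\int_{t_1}^{t_2}|G(R)|$ is bounded by $N_0+N_1(t_2-t_1)$, using $\|G\|_{L^1}\le C\|\sqrt{2\mu+\lambda}\tdiv\bmu\|_2 R_T^{\beta/2}+C$ (square-integrable in time by \eqref{B-E-estimate}), the $L^1_t$ bounds on $h$ from Lemma \ref{d-disp-est}, and $\sup_t|\xi|$.

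Third, $\bar P\le C$ follows from the energy bound. With $g(\infty)=-\infty$, Lemma \ref{Zlotnik} then yields
\[
\theta(\rho)\le C+CN_0\le C+CR_T^{\kappa}
\]
with $\kappa<\beta$. Since $\theta(\rho)\sim\rho^\beta/\beta$, this gives $R_T^\beta\le C+CR_T^\kappa$, and hence $R_T\le C$ independently of $T$.

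The main obstacle is obtaining the exponent $\kappa<\beta$ using only $\beta>1$. The crude $L^\infty$ bound on $u$ through \eqref{est_AB_another} is exponential in $R_T$ and fails. We therefore must use only the logarithmic bound \eqref{est_AB}, through a Brezis--Wainger-type estimate (Lemma \ref{Brezis-Wainger}) on $\xi$ and $G(R,t)$, together with the radial structure $\int\rho u^2/s\le0$. This should give roughly $R_T^{(1+\alpha\beta+1)/2}$-type powers, which are below $\beta$ when $\beta>1$ and $\alpha$ is small. This balance is where $\beta>1$ enters, and we will check it carefully.
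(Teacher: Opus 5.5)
Your overall architecture matches the paper's: Zlotnik along particle paths for $\theta(\rho)$, with $\xi$ controlled by Brezis--Wainger plus \eqref{est_AB}, $G(R,t)$ recovered by integrating the momentum equation against $r^2$, and the director terms controlled by Lemma \ref{d-disp-est}. Your treatment of $h$ is slightly cleaner than the paper's: you discard $-\frac12|\partial_r\bmd|^2\le 0$ and bound $\int_r^R|\partial_s\bmd|^2s^{-1}ds$ by \eqref{BEe-proof-3}.

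However, the sign bookkeeping for the convective term is wrong. From $\xi_t+\rho u^2+\int_R^r\frac{\rho u^2}{s}ds=G-G(R,t)-h$ and $D_t\xi=\xi_t+\rho u^2$ you get
\[
\frac{D}{Dt}(\theta+\xi)=g(\theta)+\bar P-G(R,t)-h+\int_r^R\frac{\rho u^2}{s}\,ds .
\]
The last term is nonnegative. Moving the nonpositive quantity $\int_R^r\rho u^2s^{-1}ds$ to the right-hand side raises $D_t(\theta+\xi)$, so your displayed differential inequality is false. This term must be estimated, as in the paper:
\[
\int_{t_1}^{t_2}\int_0^R\frac{\rho u^2}{s}\,ds\,d\tau\le\frac{R_T}{2\pi}\int_0^T\int_\Omega\frac{u^2}{r^2}\,dx\,dt\le CR_T\int_0^T\|\nabla\bmu\|_{L^2}^2dt\le CR_T .
\]
It contributes $R_T^{1}$ to $N_0$, and together with $\xi$ this is exactly where $\beta>1$ is needed. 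The threshold is unchanged, but your account of where $\beta>1$ enters is incomplete.

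Second, the $G(R,t)$ step does not close as written. With $\|G\|_{L^1}\le CR_T^{\beta/2}\|\sqrt{2\mu+\lambda}\tdiv\bmu\|_{L^2}+C$ and Cauchy--Schwarz in time you get $CR_T^{\beta/2}(t_2-t_1)^{1/2}$. When you write $\theta\le C+CN_0$ you implicitly keep $N_1$ independent of $R_T$, since $\zeta_0$ depends on $N_1$. That forces $N_0\sim R_T^{\beta}$, and $R_T^\beta\le C+CR_T^\beta$ gives nothing.

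The fix uses $\int_\Omega G\,dx=\int_\Omega\lambda\tdiv\bmu\,dx$. Either put $\int\rho^\beta\,dx\le CR_T^{(\beta-\gamma)_+}$ into Cauchy--Schwarz, or use the paper's exact identity $\int_\Omega\lambda\tdiv\bmu\,dx=\frac{1}{1-\beta}\frac{d}{dt}\int_\Omega\rho^\beta dx$. Both give $N_0\sim R_T^{(\beta-\gamma)_+}$ with $N_1=C$. Alternatively, track $\zeta_0\sim N_1^{\beta/\gamma}$ and balance the two exponents. Also, you must bound $\bigl|\int_{t_1}^{t_2}G(R,\tau)\,d\tau\bigr|$, not $\int_{t_1}^{t_2}|G(R,\tau)|\,d\tau$, because the term $\frac{d}{dt}\int_0^R\rho u r^2dr$ only telescopes without absolute values.

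Finally, for $\xi$ the route through $\|\nabla\bmu\|_{L^2}^{\delta}$ cannot work. $\|\nabla\bmu\|_{L^2}$ is controlled only through $A$, which is exponentially large in $R_T$. What works is Lemma \ref{Brezis-Wainger} applied to $\xi$ as a radial function with $\nabla\xi=\rho\bmu$. Then $\|\nabla\xi\|_{L^2}\le CR_T^{1/2}$ by the energy estimate, while $\|\nabla\xi\|_{L^3}\le CR_T\|\nabla\bmu\|_{L^2}$ enters only inside $\log^{1/2}$. By \eqref{ellp} and \eqref{est_AB} that factor is at most $CR_T^{(1+\alpha\beta)/2}$, giving $\|\xi\|_{L^\infty}\le CR_T^{1+\alpha\beta/2}$. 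Your last paragraph points to this; it should replace the first attempt, not supplement it.
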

\begin{proof}
	It is obvious that $y=\theta(\rho)$ is strictly increasing for $\rho\in(0, +\infty)$ so that $\rho=\theta^{-1}(y)$ exists for $y\in\mathbb{R}$ and $\lim\limits_{y\rightarrow+\infty}g(y)=-\infty. $ It remains to verify that each term in \eqref{Zlot_form} satisfies the conditions in Lemma \ref{Zlotnik},  especially the \textit{Lipschitz} coefficients of $b$ and $Z$. 
	
	By using Lemma \ref{Brezis-Wainger}, \eqref{ellp},  and \eqref{est_AB}, we have that for any $\alpha \in (0, 1)$, 
	\begin{equation}\label{xi_infity}
		\begin{aligned}
			\slnorm{\xi}{\infty} &\le \slnorm{\nabla \xi }{2} \log^\frac{1}{2}(e+\slnorm{\nabla \xi}{3})+\slnorm{\xi}{2}+1\\
			&\le \slnorm{\rho \bmu}{2} \log(e+\slnorm{\rho 
				\bmu}{3}) +\slnorm{\rho \bmu}{\frac{2\gamma}{1+\gamma}}+1\\
			&\le R_T^{\frac{1}{2}} \log(e+R_T\slnorm{\nabla \bmu}{2})+1\\
			&\leq R_T^{\frac{1}{2}} \log(e+R_T(\left\|\frac{G}{\sqrt{2\mu+\lambda}}\right\|_{L^2}+R_T^{\frac{\gamma-2\beta}{2}}+1))+R_T\\
			&\le C(\alpha)R_T^{\frac{1}{2}}R_T^{\frac{1+\alpha \beta}{2}} +R_T\le C(\alpha)R_T^{1+\alpha \beta},
		\end{aligned}
	\end{equation}
which implies that, for any $0 <t_1< t_2$, 
\begin{equation}\label{est-xi}
	|\xi(t_2)-\xi(t_1)|
	\le 2\slnorm{\xi}{\infty} \le C(\alpha)R_T^{1+\alpha \beta} . 
\end{equation}
For the term involving $F(R,\tau):=G(R, \tau)-\bar{P}(\tau)$, we firstly represent $F(R,\tau)$ by some integrals on $\Omega$ as in \cite{HuangSu2024}. Multiplying \eqref{S3_momentum_eq} by $r^2$ gives $$ (r^2 \rho u)_t+\partial_r(r^2 \rho u^2)-{r}\rho u^2=r^2 \partial_r(G-\bar{P})-r^2 \partial_rh, $$
and then integrating over $(0, R)$ leads to $$ \int_0^R (r^2 \rho u)_tdr+\int_0^R \partial_r(r^2 \rho u^2)dr-\int_0^R{r}\rho u^2dr=\int_0^Rr^2 \partial_r(G-\bar{P})dr-\int_0^Rr^2 \partial_rhdr. $$
It follows from \eqref{BC2},  integration by parts,  and $h(R, t)=0$ that
$$\int_0^R\partial_r(r^2 \rho u^2)dr=(r^2 \rho u^2)|_0^R=0, $$ 
$$\int_0^Rr^2 \partial_r(G-\bar{P})dr = r^2(G-\bar{P})|_0^R-\int_0^R 2r (G-\bar{P})dr=R^2F(R, t)-2\int_0^Rr(G-\bar{P})dr, $$
and
$$\int_0^Rr^2 \partial_rhdr = r^2h|_0^R-\int_0^R 2r hdr=-2\int_0^Rrhdr. $$
Thus,  one has 
\begin{equation}\label{FR-def}
	F(R, t)=\frac{1}{R^2}\left[\frac{d}{dt} \int_0^R \rho u r^2dr+2 \int_0^R (G-\bar{P}) rdr -2\int_0^R hrdr-\int_0^R r\rho u^2dr\right]. 
\end{equation}
For any $0 <t_1< t_2$, we have
\begin{equation}\label{integral of FR}
	\begin{aligned}
		\int_{t_1}^{t_2}F(R,\tau)d\tau=&\frac{1}{R^2}\left[\int_0^R \rho u r^2dr|_{\tau=t_2}-\int_0^R \rho u r^2dr|_{\tau=t_1}\right]-\frac{1}{R^2} \int_{t_1}^{t_2}\int_0^R r\rho u^2drd\tau\\
		&+\frac{2}{R^2}\int_{t_1}^{t_2}\int_0^R (G-\bar{P}) rdr-\frac{2}{R^2}\int_{t_1}^{t_2}\int_0^R h rdr.
	\end{aligned}
\end{equation}
It is obvious that
\begin{align}\label{FR1}
	\left|\int_0^R \rho u r^2dr|_{\tau=t_1, t_2}\right|=\frac{1}{2\pi }\left|\int_\Omega\rho urdx\right|\leq CR\int_\Omega(\rho u^2+\rho)dx\leq CR,
\end{align}
and 
\begin{equation}\label{FR2}
	\left|\int_{t_1}^{t_2}\int_0^R r\rho u^2drd\tau\right|\leq \left|\int_{t_1}^{t_2}\int_\Omega\rho u^2dxd\tau\right|\leq C(t_2-t_1). 
\end{equation}
Since 
\begin{equation*}
	\lambda_t+\mathrm{div}(\lambda\bmu)+(\beta-1)\lambda\mathrm{div}\bmu dx=0,
\end{equation*}
which yields that
$$\frac{d}{dt}\int_\Omega\rho^\beta dx=(1-\beta)\int_\Omega\lambda\mathrm{div}\bmu dxd\tau.$$
Then, we have
\begin{equation}\label{FR3}
	\begin{aligned}
	\left|\int_{t_1}^{t_2}\int_0^R (G-\bar{P}) rdr\right|&=\left|\frac{1}{\pi}\int_{t_1}^{t_2}\int_\Omega (G-\bar{P})dx\right|\\
	&=\left|\frac{1}{\pi}\int_{t_1}^{t_2}\int_\Omega(2\mu+\lambda)\mathrm{div}\bmu dxd\tau-\frac{1}{\pi}\int_{t_1}^{t_2}\int_\Omega Pdxd\tau\right|\\
	&=\left|\frac{1}{(1-\beta)\pi}\int\rho^\beta dx|_{t_1}^{t_2}-\frac{1}{\pi}\int_{t_1}^{t_2}\int_\Omega Pdxd\tau\right|\\
	&\leq CR_T^{(\beta-\gamma)_+}+C(t_2-t_1),
\end{aligned}
\end{equation}
where we have used
\begin{equation*}
	\int_\Omega\rho^\beta dx\leq \begin{cases}
		C,& \beta\leq \gamma,\\
		 CR_T^{\beta-\gamma},&\beta\geq\gamma.
	\end{cases}
\end{equation*}
By using interchange the order of integration,  one has
\begin{equation}\label{est_int_h}
	\begin{aligned}
		\left|\int_{t_1}^{t_2} \int_0^R h \cdot rdrd\tau\right|&\le \int_{t_1}^{t_2} \int_0^R {|\partial_r\bmd|}^2 \cdot rdrd\tau +\int_{t_1}^{t_2} \int_0^R |\int_R^r \frac{{|\partial_s \bmd|}^2}{s}ds| rdrd\tau\\
		&\le \frac{1}{2\pi}\int_{t_1}^{t_2} \int_{\Omega} {|\nabla \bmd|}^2dxdt+\int_{t_1}^{t_2} \int_0^R \int_r^R \frac{{|\partial_s \bmd|}^2}{s}ds rdrd\tau\\
		&=\frac{1}{2\pi}\int_{t_1}^{t_2} \int_{\Omega} {|\nabla \bmd|}^2dxdt+\int_{t_1}^{t_2} \frac{1}{2}\int_0^R r^2\frac{{|\partial_s \bmd|}^2}{r}drd\tau\\
		&=\frac{3}{4\pi}\int_{t_1}^{t_2} \int_{\Omega} {|\nabla \bmd|}^2dxdt\le C(t_2-t_1). 
	\end{aligned}
\end{equation}
Thus, plugging \eqref{FR1}-\eqref{est_int_h} into \eqref{integral of FR} leads to
\begin{equation}\label{est-int-G}
	\left|\int_{t_1}^{t_2}G(R,\tau)-\bar{P}(\tau)d\tau\right|\leq C(t_2-t_1)+CR_T^{(\beta-\gamma)_+}+C.
\end{equation}
	Note that
	\begin{equation*}
		\int_\Omega\frac{u^2}{r^2}dr\leq \int_\Omega(\partial_ru)^2+\frac{u^2}{r^2}dr=\|\mathrm{div}\bmu\|_{L^2}^2\leq \|\nabla\bmu\|_{L^2}^2,
	\end{equation*}
we have
\begin{equation}\label{est-int-rhou2}
	\left|\int_{t_1}^{t_2} \int_R^r\frac{\rho u^2}{s}dsd\tau\right|\leq \|\rho\|_{L^\infty L^\infty}\int_{t_1}^{t_2}\int_\Omega\frac{u^2}{r^2}drd\tau\leq CR_T. 
\end{equation}
It obvious that
\begin{equation}\label{est-int-h}
	\begin{aligned}
		\left|\int_{t_1}^{t_2} h d\tau\right|&=\int_{t_1}^{t_2} \left(\frac{1}{2}|\partial_r\bmd|^2+\int_R^r\frac{|\partial_sd|^2}{s}ds \right)d\tau\\
		&\le \int_0^T \int\frac{1}{r^2}|\partial_r\bmd|^2dx+\int | \partial_{rr}\bmd|^2 dxdt +\int_0^T \int_0^R \frac{|\partial_r\bmd|^2}{r}drdt\le C. 
	\end{aligned}
\end{equation}
where we have used \eqref{B-E-estimate} and 
\begin{equation*}
	\begin{aligned}
		|\partial_r \bmd|^2&=2\int_0^r\partial_r\bmd \cdot \partial_{rr}\bmd dr \le 2\int_0^R |\partial_r\bmd|| \partial_{rr}\bmd| dr\\
		&\le \int_0^R\frac{1}{r} |\partial_r\bmd|^2dr +\int_0^R r| \partial_{rr}\bmd|^2 dr\\
		&\le \int\frac{1}{r^2}|\partial_r\bmd|^2dx+\int | \partial_{rr}\bmd|^2 dx. 
	\end{aligned}
\end{equation*}
Therefore,  \eqref{est-xi}, \eqref{est-int-G}, \eqref{est-int-rhou2} and \eqref{est-int-h} leads to
\begin{equation*}
	\begin{aligned}
		|b(t_2)-b(t_1)|+|Z(t_2)-Z(t_1)| \le C+C(\alpha)R_T^{1+\alpha \beta}+CR_T^{(\beta-\gamma)_+}+C(t_2-t_1). 
	\end{aligned}
\end{equation*}
It follows from using \textit{Zlotnik's} inequality \eqref{lemma2-8} that
\begin{equation*}
	\begin{aligned}
		\theta \le C(\alpha)R_T^{1+\alpha \beta}+CR_T^{(\beta-\gamma)_+}, 
	\end{aligned}
\end{equation*}
%		that is same to 
%		\begin{equation*}
	%			\begin{aligned}
		%				\rho^\beta \le C(\alpha)R_T^{1+\alpha \beta} +R_T^{\gamma-2\beta+1}, 
		%			\end{aligned}
	%		\end{equation*}
which implies that
\begin{equation*}
	\begin{aligned}
		R_T^\beta \le C(\alpha)R_T^{1+\alpha \beta}+R_T^{(\beta-\gamma)_+}.
	\end{aligned}
\end{equation*}
Hence, by choosing $0<\alpha<\frac{\beta-1}{\beta}<1$,  we get $	R_T\le C$. This completes the proof of proposition.
\end{proof}

\section{Higher order A prior estimates estimates}
%Indeed,  the bound of the density is enough to extend the local strong solution to a global one.  The proof is analogous to the blowup criterion established in \cite{HuangSu2024}. 
%
%To make the content self-contained,  we give a detailed proof as follows. 
%With the time-independent upper bound of the density at hand, we can also derive the following higher order time-independent estimates.

In this section, we derive higher order estimates based on the upper bound of the density.
\begin{lemma}
	There is a positive constant $C$ depending only on $\mu, \beta, \gamma$, $\Omega$, $\|\rho_{0}\|_{L^\infty}$  $\|\bmu_0\|_{H^1}$, and $\|\nabla\bmd_0\|_{H^1}$ such that
	\begin{equation}\label{est_rhodotu}
\begin{aligned}
		\sup_{0\le t \le T} &(\sigma \slnorm{\sqrt{\rho}\dot{\bmu}}{2}^2 +\sigma \slnorm{\nabla \bmd_t}{2}^2+\sigma\|\nabla^3\bmd\|_{L^2}^2) \\
		&\quad+ \int_0^T \sigma \slnorm{\nabla \dot{\bmu}}{2}^2+\sigma \slnorm{\nabla^2\bmd_t }{2}^2 dt \le C,
\end{aligned}
	\end{equation}
	where $\sigma : =\min\{ 1, t \}$. Moreover, for any $p\in[1,+\infty)$, there exists a positive constant $C(p)$ depending only on $p, \mu, \beta, \gamma, \Omega$, $\|\rho_{0}\|_{L^\infty}$, $\|\bmu_0\|_{H^1}$, and $\|\nabla\bmd_0\|_{H^1}$ such that
	\begin{equation}\label{est_nbdup}
		\sup_{0 \le t \le T}\sigma^\frac{1}{2}\|\nabla\bmu\|_{L^p}+\sigma^\frac{1}{2}\|\nabla^2\bmd\|_{L^p}\leq C(p).
	\end{equation}
	\begin{proof}
		First, it follows from \eqref{est_AB_another}, \eqref{B-E-estimate}, \eqref{idd}, \eqref{d-est2}, \eqref{lemma2-2} and \eqref{rho_upperbdd} that
		\begin{equation}\label{low-ord-est}
			\|\bmu\|_{L^\infty}^2+\|\nabla\bmd\|_{L^\infty}^2+\|\bmu\|_{H^1}^2+\|\nabla\bmd\|_{H^1}^2+\int_{0}^{T}\|\bmu\|_{H^1}^2+\|\sqrt{\rho}\dot{\bmu}\|_{L^2}^2+\|\nabla^2\bmd\|_{H^1}^2dt\leq C,
		\end{equation}
		        where we have used the fact
		\begin{equation*}
			\|\nabla\bmd\|_{L^\infty}^2\leq C\int_\Omega\left(|\partial_{r r}\bmd|^2+\frac{|\partial_{r}\bmd|^2}{r^2}\right)dx\leq \int_\Omega|\Delta\bmd|^2dx\leq C
		\end{equation*}
		due to \eqref{dinfty}, \eqref{syscond_d}, \eqref{est_AB_another} and \eqref{rho_upperbdd}.
		
		Next, applying $\dot{u}^j\left(\frac{\partial}{\partial t}+\text{div}(\bmu \cdot)\right)$ to the $j$-th component of $\eqref{CEL-sys}_2$,  summing with respect to $j$, and integrating the resultant over $\Omega$ yield that
		\begin{equation}\label{rhodtu_p_1}
			\begin{aligned}
				\frac{1}{2}\frac{d}{dt}\left(\int \rho|\dot{\bmu}|^2 dx\right) &= - \int \dot{u}_j\left(\partial_j P_t + \text{div}(\bmu \partial_j P)\right)dx+\mu \int \dot{u}_j\left(\Delta u_{tj} + \text{div}(\bmu \Delta u_j)\right)dx \\
				& \quad + \int \dot{u}_j\left[\partial_t \partial_j\left((\mu + \lambda)\text{div} \bmu\right) + \text{div}\left(\bmu \partial_j\left((\mu + \lambda)\text{div} \bmu\right)\right)\right]dx \\
				& \quad - \int \dot{u}_j\left[\partial_t\partial_kD_{jk}+\tdiv(\bmu \partial_kD_{jk}) \right]dx =:\sum_{i=1}^4 K_i. 
			\end{aligned}
		\end{equation}
		with $D_{jk}=\left(\partial_j\bmd\cdot \partial_k\bmd-\frac{1}{2}|\nabla\bmd|^2\delta_{jk}\right)$.
		
		Note that $\dot{\bmu}\big|_{\partial\Omega}=0$ due to \eqref{BC}. Then, we can get from integration by parts,  $\eqref{CEL-sys}_1$ and \eqref{rho_upperbdd} that
		 \begin{equation*}\label{rhodtu_p_2_1}
			\begin{aligned}
				K_1 &= -\int \dot{u}_j [\partial_j P_t + \tdiv \partial_j( P \bmu)-\tdiv ( P \partial_j\bmu)] dx \\
				&= \int \tdiv\dot{\bmu}(P_t+\tdiv(P\bmu)) dx-\int\nabla\dot{u}_j\cdot\partial_j\bmu Pdx \\
				& \leq C\slnorm{\nabla \dot{\bmu}}{2}\slnorm{\nabla \bmu}{2} \leq \frac{\mu}{8} \slnorm{\nabla \dot{\bmu}}{2}^2 + C\slnorm{\nabla \bmu}{2}^2, 
			\end{aligned}
		\end{equation*}
        \begin{equation*}\label{rhodtu_p_2_2}
    		\begin{aligned}
    			K_2 &= \mu \int \dot{u}_j [\Delta \dot{u}_j-2\partial_iu_k\partial_{ki}^2u_j-\Delta u_k\partial_k u_j + \tdiv\bmu \Delta u_j] dx \\
    			&=-\mu \int (|\nabla \dot{\bmu}|^2-2\partial_i \dot{u}_j\partial_iu_k\partial_k u_j-\dot{u}_j\Delta u_k\partial_k u_j+\partial_i\dot{u}_j\tdiv \bmu \partial_i u_j+\dot{u}_j\partial_i(\tdiv\bmu)\partial_i u_j)dx\\
    			&= -\mu \int (|\nabla \dot{\bmu}|^2 + \partial_i \dot{u}_j \partial_k u_k \partial_i u_j - \partial_i \dot{u}_j \partial_i u_k \partial_k u_j - \partial_i u_j \partial_i u_k \partial_k \dot{u}_j) dx \\
    			&\leq -\frac{7}{8}\mu \slnorm{\nabla \dot{\bmu}}{2}^2 + C \slnorm{\nabla \bmu}{4}^4
    		\end{aligned}
        \end{equation*}
        and
                \begin{equation*}\label{rhodtu_p_2}
        	\begin{aligned}
        		K_3 &= \int \dot{u}_j [\partial_{jt} ((\mu + \lambda)\tdiv \bmu) + \tdiv\partial_j (\bmu  (\mu + \lambda)\tdiv \bmu)-\tdiv (\partial_j\bmu  ((\mu + \lambda)\tdiv \bmu))] dx \\
        		&= -\int \tdiv \dot{\bmu} [((\mu + \lambda)\tdiv \bmu)_t + \tdiv (\bmu (\mu + \lambda)\tdiv \bmu)] dx-\int \dot{u}_j \tdiv (\partial_j \bmu (\mu + \lambda)\tdiv \bmu) dx \\
%        		&= -\int \tdiv \dot{\bmu} [(\mu + \lambda)\tdiv \bmu_t + (\mu + \lambda)(\bmu \cdot \nabla)\tdiv \bmu]dx \\
%        		&\quad-\int \tdiv \dot{\bmu} [\lambda_t \tdiv \bmu + (\bmu \cdot \nabla \lambda)\tdiv \bmu + (\mu + \lambda)(\tdiv \bmu)^2 ]dx \\
%        		&\quad +\int \nabla \dot{u}_j \cdot \partial_j \bmu (\mu + \lambda)\text{div} \bmu dx \\
        		&= -\int (\mu + \lambda)\tdiv \dot{\bmu} (\tdiv \dot{\bmu} - \partial_i u_j \partial_j u_i) dx-\int (\mu + \lambda - \beta \lambda)\tdiv \dot{\bmu} (\tdiv \bmu)^2 dx\\
        		& \quad + \int (\mu + \lambda)\nabla \dot{u}_j \cdot \partial_j \bmu\tdiv \bmu dx \\
        		& \leq -\left\|\sqrt{\mu + \lambda}\tdiv \dot{\bmu}\right\|_{L^2}^2 + \frac{\mu}{8} \slnorm{\nabla \dot{\bmu}}{2}^2 + C\slnorm{\nabla \bmu}{4}^4.
        	\end{aligned}
        \end{equation*}
           
%		 It follows $\eqref{CEL-sys}_3$, integration by parts and $\dot{\bmu}\big|_{\partial\Omega}=0$ that
%        \begin{equation}\label{rhodtu_p_3}
%            \begin{aligned}
%                K_4 &= -\int \tdiv \dot{\bmu} \partial_k \bmd \cdot \partial_k \bmd_t dx + \frac{1}{2} \int \dot{u}_j \bmu\cdot \nabla (\partial_j |\nabla \bmd|^2) dx+ \frac{1}{2} \int \dot{u}_j  \tdiv\bmu \partial_j |\nabla \bmd|^2dx  \\
%				&= -\int \tdiv \dot{\bmu} \partial_k \bmd \cdot \left(\Delta \partial_k \bmd - \partial_k \bmu \cdot \nabla \bmd + \partial_k (|\nabla \bmd|^2 \bmd)\right) dx \\
%				&\quad  - \frac{1}{2} \int \dot{u}_j\partial_j u_i\partial_i |\nabla \bmd|^2 dx+ \frac{1}{2} \int \dot{u}_j \tdiv\bmu \partial_j |\nabla \bmd|^2 dx\\
%				&= -\int \partial_j \dot{u}_j \partial_k \bmd \cdot \left(\Delta \partial_k \bmd - \partial_k \bmu \cdot \nabla \bmd +|\nabla \bmd|^2 \partial_k\bmd\right) dx \\
%				&\quad  +\frac{1}{2} \int \partial_i\dot{u}_j\partial_j u_i |\nabla \bmd|^2 dx- \frac{1}{2} \int \partial_j\dot{u}_j \tdiv\bmu  |\nabla \bmd|^2 dx\\
%				&\leq \frac{\mu}{8} \slnorm{\nabla \dot{\bmu}}{2}^2 +C\|\nabla \bmd\|_{L^\infty}^2\|\nabla \Delta \bmd\|_{L^2}^2+ C \slnorm{\nabla \bmu}{4}^4 + C \slnorm{\nabla \bmd}{8}^8. 
%			\end{aligned}
%		\end{equation}
Similarly, using integration by parts and \eqref{low-ord-est} leads to
		\begin{equation*}\label{rhodtu_p_4}
			\begin{aligned}
				K_4 &=\int \partial_k \dot{u}_j\partial_tD_{jk}+\nabla \dot{u}_j\cdot\bmu \partial_k D_{jk}dx\\
				&\leq C\int|\nabla\dot{\bmu}||\nabla\bmd|(|\nabla\bmd_t|+|\nabla^2\bmd||\bmu|)dx\\
				&\leq\|\nabla\dot{\bmu}\|_{L^2}\|\nabla\bmd\|_{L^\infty}(\|\nabla\bmd_t\|_{L^2}+\|\nabla^2\bmd\|_{L^2}\|\bmu\|_{L^\infty})dx\\
				&\leq   \frac{\mu}{8} \slnorm{\nabla \dot{\bmu}}{2}^2+C\left(\|\nabla\bmd_t\|_{L^2}^2+\|\nabla^2\bmd\|_{L^2}^2\right).
			\end{aligned}
		\end{equation*}
		Plugging estimates of $K_1-K_4$ into \eqref{rhodtu_p_1} leads to 
		\begin{equation}\label{rhodtu_p_5}
			\begin{aligned}
				&\frac{d}{dt} \slnorm{\sqrt{\rho}\dot{\bmu}}{2}^2 +\mu \|\nabla\dot{\bmu}\|_{L^2}^2 +\left\|\sqrt{\mu + \lambda}\tdiv \dot{\bmu}\right\|_{L^2}^2 \\
				&\le C\left(\slnorm{\nabla \bmu}{4}^4+\|\nabla\bmd_t\|_{L^2}^2+\|\nabla^2\bmd\|_{L^2}^2+\slnorm{\nabla \bmu}{2}^2\right).
			\end{aligned}
		\end{equation}
		Using elliptic estimates of \eqref{ellipest_1}, \eqref{G-def} and \eqref{rho_upperbdd}, one has
		\begin{equation}\label{rhodtu_p_8}
			\begin{aligned}
				\slnorm{\nabla \bmu}{4}^4 &\le C\slnorm{\text{div} \bmu}{4}^4 \le C\left(\slnorm{G}{4}^4+\slnorm{P-\bar{P}}{4}^4\right)\\
				&\leq C(\|G\|_{L^2}^2\|G\|_{H^1}^2+\slnorm{P-\bar{P}}{2}^2)\\
				&\leq C\left(\left\|\frac{G}{\sqrt{2\mu+\lambda}}\right\|_{L^2}^2\|G\|_{H^1}^2+\slnorm{G}{2}^2 +\slnorm{\nabla \bmu}{2}^2\right)\\
				&\leq C\left(\slnorm{\sqrt{\rho}\dot{\bmu}}{2}^2 +\|\nabla^2 \bmd\|_{H^1}^2+\slnorm{\nabla \bmu}{2}^2\right). 
			\end{aligned}
		\end{equation}
		where we have used \eqref{est_AB_another} and \eqref{est_G_H1} in the last inequality. 
		
		In view of \eqref{nabula_cel3} and \eqref{low-ord-est}, one has 
		\begin{equation}\label{nbdt-est}
			\begin{aligned}
				\|\nabla\bmd_t\|_{L^2}^2&\leq C\left(\|\nabla^3\bmd\|_{L^2}^2+\|\nabla\bmu\|_{L^2}^2\|\nabla\bmd\|_{L^\infty}^2+(\|\bmu\|_{L^\infty}^2+\|\nabla\bmd\|_{L^\infty}^2)\|\nabla^2\bmd\|_{L^2}^2+\|\nabla\bmd\|_{L^6}^6\right)\\
				&\leq C\left(\|\nabla^3\bmd\|_{L^2}^2+\|\nabla\bmu\|_{L^2}^2+\|\nabla\bmd\|_{L^2}^2\|\nabla^2\bmd\|_{L^2}^4\right)\\
				&\leq C\left(\|\nabla^2\bmd\|_{H^1}^2+\|\nabla\bmu\|_{L^2}^2\right).
			\end{aligned}
		\end{equation}
		Therefore, combing \eqref{rhodtu_p_5}, \eqref{rhodtu_p_8} and \eqref{nbdt-est} implies that
		\begin{equation}
			\begin{aligned}
			\frac{d}{dt} \slnorm{\sqrt{\rho}\dot{\bmu}}{2}^2 +\mu \|\nabla\dot{\bmu}\|_{L^2}^2\le C\left(\slnorm{\sqrt{\rho}\dot{\bmu}}{2}^2 +\|\nabla^2 \bmd\|_{H^1}^2+\slnorm{\nabla \bmu}{2}^2\right). 
			\end{aligned}
		\end{equation}
		Multiplying the above inequality by $\sigma(t)$, integrating over $[0, T]$ , and using \eqref{low-ord-est} yield that
			\begin{equation}\label{est_rhodotu1}
			\sup_{0\le t \le T} \sigma \slnorm{\sqrt{\rho}\dot{\bmu}}{2}^2+\int_0^T \sigma \slnorm{\nabla \dot{\bmu}}{2}^2dt \le C(M).
		\end{equation}
		Note that $\nabla \bmd_j|_{\partial\Omega}=\partial_rd_j\frac{x}{r}|_{\partial\Omega}=0$, which follows from \eqref{BC2}. Then, differentiating \eqref{nabula_cel3} respect to $t$, multiplying the resultant by $\nabla \bmd_t$, and using integration by parts, we obtain that 
%		\begin{equation}\label{nabula_t_cel3}
%			\begin{aligned}
%				\nabla \bmd_{tt} -\Delta \nabla \bmd_t =-\nabla (\bmu \cdot \nabla \bmd)_t +\nabla (|\nabla \bmd|^2\bmd)_t. 
%			\end{aligned}
%		\end{equation}
%		, we arrive that
\begin{equation}\label{nbdt-ests}
	\begin{aligned}
	\frac{1}{2}\frac{d}{dt}\slnorm{\nabla \bmd_t}{2}^2+\slnorm{\nabla^2 \bmd_t}{2}^2 &=-\int \nabla (\bmu \cdot \nabla \bmd)_t \cdot\nabla\bmd_t dx+\int \nabla (|\nabla \bmd|^2\bmd)_t\cdot \nabla\bmd_t dx\\
		&=-\int\nabla\dot{\bmu}\cdot\nabla\bmd\cdot\nabla\bmd_t dx-\int(\dot{\bmu}\cdot\nabla)\nabla\bmd\cdot\nabla\bmd_t dx\\
		&\quad+\int\nabla((\bmu\cdot\nabla\bmu)\cdot\nabla\bmd)\cdot\nabla\bmd_t dx-\int\nabla(\bmu\cdot\nabla\bmd_t)\cdot\nabla\bmd_t dx\\
		&\quad+\int \nabla (|\nabla \bmd|^2\bmd)_t\cdot \nabla\bmd_t dx=:\sum_{i=1}^5 N_i
	\end{aligned}
\end{equation}
		It follows from \eqref{lemma2-3} that 
		\begin{equation*}
				\begin{aligned}
					N_1 &\leq C\|\nabla \dot{\bmu}\|_{L^2}\|\nabla \bmd\|_{L^4}\|\nabla \bmd_t\|_{L^4}\\
					&\leq C\|\nabla \dot{\bmu}\|_{L^2}\|\nabla \bmd_t\|_{L^2}^\frac{1}{2}\|\nabla^2 \bmd_t\|_{L^2}^\frac{1}{2}\\
					&\leq\frac{1}{8} \|\nabla^2 \bmd_t\|_{L^2}^2+C\|\nabla\bmd_t\|_{L^2}^2+C\|\nabla \dot{\bmu}\|_{L^2}^2.
				\end{aligned}
		\end{equation*}
		By using \eqref{lemma2-1}, one has
		\begin{equation*}
			\begin{aligned}
				N_2&\leq C\|\dot{\bmu}\|_{L^4}\|\nabla \bmd_t\|_{L^4}\|\nabla^2\bmd\|_{L^2}\\
				& \leq C\|\nabla \dot{\bmu}\|_{L^\frac{4}{3}}\|\nabla \bmd_t\|_{L^2}^\frac{1}{2}\|\nabla^2 \bmd_t\|_{L^2}^\frac{1}{2}\\
				& \leq \frac{1}{8}\|\nabla^2 \bmd_t\|_{L^2}^2 + C\|\nabla \bmd_t\|_{L^2}^2+C\|\nabla \dot{\bmu}\|_{L^2}^2.
			\end{aligned}
		\end{equation*}
		Using integration by parts and \eqref{low-ord-est} lead to
		\begin{equation*}
				N_3 \leq \|\bmu\|_{L^\infty}\|\nabla\bmu\|_{L^2}\|\nabla\bmd\|_{L^\infty}\|\nabla^2\bmd_t\|_{L^2}\leq \frac{1}{8} \|\nabla^2 \bmd_t\|_{L^2}^2+C\|\nabla\bmu\|_{L^2}^2.
		\end{equation*}
		\begin{equation*}
				N_4 \leq \|\bmu\|_{L^\infty}\|\nabla\bmd_t\|_{L^2}\|\nabla^2\bmd_t\|_{L^2}\leq \frac{1}{8} \|\nabla^2 \bmd_t\|_{L^2}^2+C\|\nabla\bmd_t\|_{L^2}^2
		\end{equation*}
		\begin{align*}
				N_5&\leq C(\|\nabla\bmd\|_{L^\infty}\|\nabla \bmd_t\|_{L^2}+\|\nabla\bmd\|_{L^\infty}^2\|\bmd_t\|_{L^2})\|\nabla^2 \bmd_t\|_{L^2}\\
				&\leq \frac{1}{8} \|\nabla^2 \bmd_t\|_{L^2}^2+C\|\nabla \bmd_t\|_{L^2}^2+C\|\bmd_t\|_{L^2}\\
				&\leq \frac{1}{8} \|\nabla^2 \bmd_t\|_{L^2}^2+C\|\nabla \bmd_t\|_{L^2}^2+C(\|\Delta\bmd+|\nabla\bmd|^2\bmd\|_{L^2}^2+\|\bmu\|_{H^1}^2),
		\end{align*}
		where we have used the following inequality
		and
		\begin{equation}\label{L2est_dt}
			\begin{aligned}
			\|\bmd_t\|_{L^2}^2&\leq C(\|\Delta\bmd+|\nabla\bmd|^2\bmd\|_{L^2}^2+\|\bmu\|_{L^4}^2\|\nabla\bmd\|_{L^4}^2)\\
			&\leq C(\|\Delta\bmd+|\nabla\bmd|^2\bmd\|_{L^2}^2+\|\bmu\|_{H^1}^2).
		\end{aligned}
		\end{equation}
Plugging estimates of $N_1,\cdots,N_5$ into \eqref{nbdt-ests} yields that
\begin{equation*}
	\frac{d}{dt}\slnorm{\nabla \bmd_t}{2}^2+\slnorm{\nabla^2 \bmd_t}{2}^2\leq C(M)(\|\nabla \dot{\bmu}\|_{L^2}^2+\|\nabla\bmd_t\|_{L^2}^2+\|\bmu\|_{H^1}^2+\|\Delta\bmd+|\nabla\bmd|^2\bmd\|_{L^2}^2).
\end{equation*}
  Multiplying the above inequality by $\sigma(t)$, integrating over $[0, T]$ , and using \eqref{nbdt-est}, \eqref{low-ord-est} and \eqref{est_rhodotu1}, we get that
 			\begin{equation}\label{est_nbdt}
 	\sup_{0\le t \le T} \sigma \slnorm{\nabla \bmd_t}{2}^2+\int_0^T \sigma \slnorm{\nabla^2 \bmd_t}{2}^2dt \le C.
 \end{equation}
Moreover, using standard elliptic estimates \eqref{elliptic_est2}, \eqref{low-ord-est}, \eqref{L2est_dt} leads to 
\begin{equation}
		\|\nabla^3\bmd\|_{L^2}^2\leq C(\|\bmd_t\|_{H^1}^2+\|\bmu\cdot\nabla\bmd\|_{H^1}^2+\||\nabla\bmd|^2\bmd\|_{H^1}^2)\leq C\|\nabla\bmd_t\|_{L^2}^2+C,
\end{equation}
which, multiplying by $\sigma(t)$, taking supremum with respect to $t$ and using \eqref{est_nbdt}, implies \begin{equation}\label{est_3nbd}
	\sup_{0\le t \le T} \sigma \slnorm{\nabla^3 \bmd}{2}^2\le C.
\end{equation}
Finally, the standard $ L^p$-estimates for the elliptic system \eqref{ellipest_1} together with \eqref{rho_upperbdd} and \eqref{eq_dotu} yields that
	\begin{align*}
		\|\nabla\bmu\|_{L^p}&\leq C\|\tdiv\bmu\|_{L^p}\leq C(\|G\|_{L^p}+\|P-\bar{P}\|_{L^p})\leq C\|G\|_{H^1}+C(p)\\
		&\leq C(p)\|\sqrt{\rho}\dot{\bmu}\|_{L^2}+C(p)\|\nabla\bmd\|_{L^\infty}\|\nabla^2\bmd\|_{L^2}+C(p)\\
		&\leq C(p)\|\sqrt{\rho}\dot{\bmu}\|_{L^2}+C(p),
	\end{align*}
and the Sobolev inequality leads to
\begin{equation*}
	\|\nabla^2\bmd\|_{L^p}\leq C\|\nabla^2\bmd\|_{H^1}.
\end{equation*}
These combined with \eqref{est_rhodotu1} and \eqref{est_3nbd} give \eqref{est_nbdup}.
Therefore, the proof of lemma is completed.
	\end{proof}
\end{lemma}

The following lemma provides some necessary estimates to control $\|\nabla\bmu\|_{L^\infty}$ and $\|\nabla\rho\|_{L^q}$. 
\begin{lemma}
	Assume that \eqref{TH1. 1-1} holds. Then for any $q>2$, there exists a constant $C$ depending on $T, q, \mu, \beta, \gamma, \|\rho_0\|_{L^\infty}$,  $\|\bmu_0\|_{H^1}$, and $\|\nabla\bmd_0\|_{H^1}$ such that
	\begin{equation}\label{rhodotuLq-est}
		\int_{0}^{T}(\|G\|_{L^\infty}+\|\nabla G\|_{L^q}+\|\rho\dot{\bmu}\|_{L^q})^{1+\frac{1}{q}}dt+\int_{0}^{T}t(\|\nabla G\|_{L^q}^2+\|\dot{\bmu}\|_{H^1}^2)\leq C.
	\end{equation}
\end{lemma}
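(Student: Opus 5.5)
The plan is to reduce every quantity in \eqref{rhodotuLq-est} to $\|\sqrt{\rho}\dot{\bmu}\|_{L^2}$, $\|\nabla\dot{\bmu}\|_{L^2}$ and $\|\nabla^2\bmd\|_{H^1}$. These are already controlled with time weights by \eqref{est_rhodotu} and \eqref{low-ord-est}, and \eqref{rho_upperbdd} gives $\rho\le C$.

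\textbf{Step 1: bound $\nabla G$ and $\rho\dot{\bmu}$.} From \eqref{eq_dotu} we have $\nabla G=\rho\dot{\bmu}+\tdiv(\nabla\bmd\odot\nabla\bmd-\frac12|\nabla\bmd|^2\mathbb{I}_2)$. Hence
\[
\|\nabla G\|_{L^q}\le C\|\rho\dot{\bmu}\|_{L^q}+C\|\nabla\bmd\|_{L^\infty}\|\nabla^2\bmd\|_{L^q}
\le C\|\rho\dot{\bmu}\|_{L^q}+C\|\nabla^2\bmd\|_{H^1},
\]
where the last step uses \eqref{low-ord-est} and Sobolev embedding. For the inertia term, $\rho\le C$ and \eqref{lemma2-3} give
\[
\|\rho\dot{\bmu}\|_{L^q}\le C\|\rho\dot{\bmu}\|_{L^2}^{2/q}\|\nabla\dot{\bmu}\|_{L^2}^{1-2/q}
\le C\|\sqrt{\rho}\dot{\bmu}\|_{L^2}^{2/q}\|\nabla\dot{\bmu}\|_{L^2}^{1-2/q}.
\]
The constant $C_2$ vanishes here because $\dot{\bmu}|_{\partial\Omega}=0$. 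For $\|G\|_{L^\infty}$, use \eqref{lemma2-4} together with $\|G\|_{L^2}\le C$, which follows from \eqref{est_AB_another} and the density bound. This gives $\|G\|_{L^\infty}\le C+C\|\nabla G\|_{L^q}$.

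\textbf{Step 2: the $t$-weighted integral.} Write $t\le C\sigma$ on $[0,1]$ and $t\le T\sigma$ on $[1,T]$, so the constant depends on $T$. By Step 1, $t\|\nabla G\|_{L^q}^2$ is bounded by $Ct(\|\sqrt{\rho}\dot{\bmu}\|_{L^2}^2+\|\nabla\dot{\bmu}\|_{L^2}^2+\|\nabla^2\bmd\|_{H^1}^2)$. Each of these terms is integrable by \eqref{est_rhodotu} and \eqref{low-ord-est}.

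For $\|\dot{\bmu}\|_{H^1}$, apply the Poincar\'e inequality, valid since $\dot{\bmu}|_{\partial\Omega}=0$, to get $\|\dot{\bmu}\|_{H^1}\le C\|\nabla\dot{\bmu}\|_{L^2}$. The weighted bound $\int_0^T\sigma\|\nabla\dot{\bmu}\|_{L^2}^2\,dt\le C$ from \eqref{est_rhodotu} then closes this part.

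\textbf{Step 3: the $L^{1+1/q}$ integrability near $t=0$ (main obstacle).} The difficulty is that no weight is available here, so we must trade the $\sigma$-weight for a slightly sub-quadratic exponent. Set $p=1+\frac1q$. By Step 1 it suffices to bound
\[
\int_0^T\Big(\|\sqrt{\rho}\dot{\bmu}\|_{L^2}^{2/q}\|\nabla\dot{\bmu}\|_{L^2}^{1-2/q}\Big)^p dt
\]
plus $\int_0^T\|\nabla^2\bmd\|_{H^1}^p\,dt$. The latter is finite by \eqref{low-ord-est} and H\"older's inequality on $[0,T]$.

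For the former, insert the weights as $(\sigma\|\sqrt{\rho}\dot{\bmu}\|_{L^2}^2)^{p/q}(\sigma\|\nabla\dot{\bmu}\|_{L^2}^2)^{p(q-2)/(2q)}\sigma^{-p/2}$. The first factor is bounded in $t$ by \eqref{est_rhodotu}. Apply H\"older to the remaining factors with exponents $\frac{2q}{p(q-2)}$ and its conjugate. The power of $\sigma$ that must then be integrated works out to
\[
\sigma^{-\frac{p}{2}\cdot\frac{2q}{2q-p(q-2)}}=\sigma^{-\frac{q+1}{q+3}}\cdot\sigma^{0},
\]
up to exact bookkeeping. Its exponent is strictly less than $1$ in absolute value, so this factor is integrable near $0$.

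I expect the exponent check to be the only delicate point. If it fails, the fallback is to instead use $\int_0^T\|\sqrt{\rho}\dot{\bmu}\|_{L^2}^2\,dt\le C$ from \eqref{low-ord-est}, without the sup bound, and redo the H\"older split accordingly. This split is how the exponent $1+1/q$ arises in the analogous estimates of \cite{HuangSu2024,Zhongxin2026}.
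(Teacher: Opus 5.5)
Your overall architecture matches the paper's: $\nabla G$ is read off from \eqref{eq_dotu}, $\|G\|_{L^\infty}$ comes from \eqref{lemma2-4} together with $\|G\|_{L^2}\le C$, and the time weights come from \eqref{est_rhodotu} and \eqref{low-ord-est}. The gap is at the one step that has to cope with vacuum. The bound $\|\rho\dot{\bmu}\|_{L^q}\le C\|\rho\dot{\bmu}\|_{L^2}^{2/q}\|\nabla\dot{\bmu}\|_{L^2}^{1-2/q}$ does not follow from $\rho\le C$ and \eqref{lemma2-3}. Applied to $\dot{\bmu}$, \eqref{lemma2-3} produces $\|\dot{\bmu}\|_{L^2}^{2/q}$, and $\|\dot{\bmu}\|_{L^2}$ is not controlled by $\|\sqrt{\rho}\dot{\bmu}\|_{L^2}$, because $\rho$ has no positive lower bound. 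Applied to $\rho\dot{\bmu}$, it produces $\nabla\rho$, which is only estimated in the next lemma, and that lemma's proof relies on this one. In fact, for general $0\le\rho\le C$ and general $H^1_0$ fields the inequality is false, since $H^1\not\subset L^\infty$ in two dimensions. Take $\rho=\chi_{B_\epsilon}$ and a field behaving like $(\log\frac{1}{|x|})^{a}$, $0<a<\frac12$, near the origin. The left side is of order $\epsilon^{2/q}(\log\frac{1}{\epsilon})^{a}$, while the right side is of order $\epsilon^{2/q}(\log\frac{1}{\epsilon})^{2a/q}$.

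There are two ways to close the gap.
\begin{itemize}
\item The paper interpolates between $L^2$ and $L^{q^2}$: $\|\rho\dot{\bmu}\|_{L^q}\le\|\rho\dot{\bmu}\|_{L^2}^{\theta}\|\rho\dot{\bmu}\|_{L^{q^2}}^{1-\theta}$ with $\theta=\frac{2(q-1)}{q^2-2}$. The weight $\rho$ stays on the $L^2$ factor, while $\|\rho\dot{\bmu}\|_{L^{q^2}}\le C\|\dot{\bmu}\|_{L^{q^2}}\le C\|\nabla\dot{\bmu}\|_{L^2}$ by Sobolev, since $\dot{\bmu}|_{\partial\Omega}=0$. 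The exponent on $\|\nabla\dot{\bmu}\|_{L^2}$ rises to $\frac{q(q-2)}{q^2-2}$, but your Step 3 still closes: the power of $\sigma$ becomes $\frac{(q+1)(q^2-2)}{q(q-1)(q+2)}=1-\frac{2}{q(q-1)(q+2)}<1$. The paper itself uses Young's inequality with $\int_0^T\|\sqrt{\rho}\dot{\bmu}\|_{L^2}^2dt\le C$ instead of your sup bound, which yields the integrable factor $t^{-1+\frac{2}{q^3-q^2-2q+2}}$.
\item Alternatively, your exact inequality does hold here, but only because of radial symmetry. The field $\dot{\bmu}=(u_t+uu_r)\frac{x}{r}$ is radial and vanishes at $r=0$ and $r=R$, so \eqref{lemma2-2} gives $\|\dot{\bmu}\|_{L^\infty}\le\|\nabla\dot{\bmu}\|_{L^2}$. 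Hence $\|\rho\dot{\bmu}\|_{L^q}^q\le\|\rho\dot{\bmu}\|_{L^2}^2\|\rho\dot{\bmu}\|_{L^\infty}^{q-2}\le C\|\rho\dot{\bmu}\|_{L^2}^2\|\nabla\dot{\bmu}\|_{L^2}^{q-2}$.
\end{itemize}
With either justification made explicit, the rest of your argument goes through. Separately, the power of $\sigma$ in your Step 3 is miscomputed. With your exponents it is $\frac{q(q+1)}{q^2+q+2}$, not $\frac{q+1}{q+3}$. It is still less than $1$, so the conclusion is unaffected.
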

\begin{proof}
	It follows from \eqref{Poincare-in} that
	\begin{equation}\label{rhodotuLq}
		\begin{aligned}
		\|\rho\dot{\bmu}\|_{L^q}&\leq C\|\rho\dot{\bmu}\|_{L^2}^{\frac{2(q-1)}{q^2-2}}\|\dot{\bmu}\|_{L^{q^2}}^\frac{q(q-2)}{q^2-2}\\
		&\leq  C\|\rho\dot{\bmu}\|_{L^2}^{\frac{2(q-1)}{q^2-2}}\|\dot{\bmu}\|_{H^1}^\frac{q(q-2)}{q^2-2}\\
		&\leq C\|\rho\dot{\bmu}\|_{L^2}+C\|\rho\dot{\bmu}\|_{L^2}^{\frac{2(q-1)}{q^2-2}}\|\nabla\dot{\bmu}\|_{L^2}^\frac{q(q-2)}{q^2-2},
	\end{aligned}
	\end{equation}
	which together with \eqref{rho_upperbdd}, \eqref{low-ord-est} and \eqref{est_rhodotu} yields that
	\begin{equation}\label{rhodotu}
	\int_{0}^{T}\left(\|\rho\dot{\bmu}\|_{L^q}^{1+\frac{1}{q}}+t\|\dot{\bmu}\|_{H^1}^2\right)dt\leq C\int_{0}^{T}\left(\|\sqrt{\rho}\dot{\bmu}\|_{L^2}^2+t\|\nabla\dot{\bmu}\|_{L^2}^2+t^{-1+\frac{2}{q^3-q^2-2q+2}} +1 \right)dt\leq C(T).
	\end{equation}
		Next, we deduce from \eqref{eq_dotu}, \eqref{lemma2-3} and \eqref{low-ord-est} that
	\begin{equation}\label{nbdG}
		\begin{aligned}
			\|\nabla G\|_{L^q}&\leq C\|\rho\dot{\bmu}\|_{L^q}+C\|\nabla\bmd\|_{L^\infty}\|\nabla^2\bmd\|_{L^q}\\
			&\leq C\|\rho\dot{\bmu}\|_{L^q}+C(\slnorm{\nabla^2 \bmd}{2}^{\frac{2}{q}}\slnorm{\nabla^3 \bmd}{2}^{1-\frac{2}{q}} + \slnorm{\nabla^2 \bmd}{2})\\
			&\leq C\|\rho\dot{\bmu}\|_{L^q}+C\slnorm{\nabla^3 \bmd}{2}^{1-\frac{2}{q}} + C
		\end{aligned}
	\end{equation}
	Thus, one gets from \eqref{rhodotu} and \eqref{low-ord-est} that
	\begin{equation*}
		\begin{aligned}
			\int_{0}^{T}\|\nabla G\|_{L^q}^{1+\frac{1}{q}}dt\leq C\int_{0}^{T}\left(\|\rho\dot{\bmu}\|_{L^q}^{1+\frac{1}{q}}+\slnorm{\nabla^3 \bmd}{2}^{\frac{(q-2)(q+1)}{q^2}}+1\right)dt\leq C(T),
		\end{aligned}
	\end{equation*}
	and 
	\begin{align*}
			\int_{0}^{T}t\|\nabla G\|_{L^q}^2dt&\leq C\int_{0}^{T}t\|\rho\dot{\bmu}\|_{L^q}^2dt+C\int_{0}^{T}t\slnorm{\nabla^3 \bmd}{2}^{\frac{2(q-2)}{q}}dt+C(T)\\
			&\leq C\int_{0}^{T}t\|\sqrt{\rho}\dot{\bmu}\|_{L^2}^2+\left(t^\frac{1}{2}\|\sqrt{\rho}\dot{\bmu}\|_{L^2}\right)^{\frac{4(q-1)}{q^2-2}}\left(t^\frac{1}{2}\|\nabla\dot{\bmu}\|_{L^2}\right)^\frac{2q(q-2)}{q^2-2}dt+C(T)\\
			&\leq C(T).
	\end{align*}
	Finally, in view of \eqref{G-def}, \eqref{lemma2-4}, \eqref{est_AB_another} and \eqref{rho_upperbdd}, one has
	\begin{equation}\label{hdproof_4}
		\begin{aligned}
			\slnorm{\tdiv \bmu}{\infty} &\le \left\|\frac{G+P-\bar{P}}{2\mu+\lambda}\right\|_{L^\infty} \le C(1+\slnorm{G}{\infty})\\
			& \le C(\slnorm{G}{2}^{\frac{q-2}{2q-2} } \slnorm{\nabla G}{q}^{\frac{q}{2q-2}} + \slnorm{G}{2}+1)\\
			&\le C(\slnorm{\nabla G}{q}^{\frac{q}{2q-2}}+1)\\
			& \le C(\slnorm{\rho \dot{\bmu}}{q}^{\frac{q}{2q-2}}+\slnorm{\nabla^3 \bmd}{2}^{\frac{q-2}{2q-2}}+1),
		\end{aligned}
	\end{equation}
	which together with \eqref{rhodotu} and \eqref{low-ord-est} leads to
	\begin{align*}
		\int_{0}^{T}(\|\tdiv\bmu\|_{L^\infty}+\|G\|_{L^\infty})^{1+\frac{1}{q}}dt&\leq C\int_{0}^{T}\left(\slnorm{\rho \dot{\bmu}}{q}^{\frac{q}{2q-2}\left(1+\frac{1}{q}\right)}+\slnorm{\nabla^3 \bmd}{2}^{\frac{(q-2)(q+1)}{(2q-2)q}}+1\right)dt\leq C(T),
	\end{align*}
	since $\frac{(q-2)(q+1)}{(2q-2)q}<2$ for any $q>2$.
\end{proof}
\begin{lemma}
		Assume that \eqref{TH1. 1-1} holds. Then for $q>2$, there exists a constant $C$ depending on $T, q, \mu, \beta, \gamma$,   $\|\bmu_0\|_{H^1}$, $\|\nabla\bmd_0\|_{H^1}$ and $\|\rho_0\|_{W^{1,q}}$ such that
	\begin{equation}\label{higerderiv}
		\begin{aligned}
			\sup_{0 \le t \le T}&(\slnorm{\nabla \rho}{q}+t^\frac{1}{2}\slnorm{\nabla^2 \bmu}{2}+t^\frac{1}{2}\slnorm{\nabla \bmd_t}{2}+t^\frac{1}{2}\slnorm{\nabla^3 \bmd}{2})+\int_{0}^{T}\|\nabla^2\bmu\|_{L^q}^{1+\frac{1}{q}}+\|\nabla^3\bmd\|_{L^q}^{1+\frac{1}{q}}dt\\
			&+\int_{0}^{T}t\|\bmu_t\|_{H^1}^2+t \slnorm{\nabla^2\bmd_t }{2}^2+t\|\nabla^2\bmu\|_{L^q}^2+t\|\nabla^3\bmd\|_{L^q}^2dt\le C(T). 
		\end{aligned}
	\end{equation}
\end{lemma}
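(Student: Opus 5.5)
The plan is to close the estimate for $\|\nabla\rho\|_{L^q}$ with a Beale--Kato--Majda argument, and then read off the remaining second-order bounds from elliptic regularity using the bounds already proved.

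\textbf{Step 1: transport inequality for $\nabla\rho$.} Apply $\nabla$ to $\eqref{CEL-sys}_1$, multiply by $q|\nabla\rho|^{q-2}\nabla\rho$ and integrate. Using \eqref{rho_upperbdd}, this gives
\begin{equation*}
\frac{d}{dt}\|\nabla\rho\|_{L^q}\le C\|\nabla\bmu\|_{L^\infty}\|\nabla\rho\|_{L^q}+C\|\nabla^2\bmu\|_{L^q}.
\end{equation*}

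\textbf{Step 2: control of $\|\nabla^2\bmu\|_{L^q}$.} Since $\bmu$ solves the elliptic problem \eqref{ellipest_1}, we have $\|\nabla^2\bmu\|_{L^q}\le C\|\nabla\tdiv\bmu\|_{L^q}$. Writing $\tdiv\bmu=(G+P-\bar P)/(2\mu+\lambda)$ and using the upper bound of $\rho$ then gives
\begin{equation*}
\|\nabla^2\bmu\|_{L^q}\le C\big(\|\nabla G\|_{L^q}+(1+\|G\|_{L^\infty})\|\nabla\rho\|_{L^q}\big).
\end{equation*}
The factor $\|G\|_{L^\infty}$ comes from differentiating $1/(2\mu+\lambda(\rho))$; this is the first place the term $\|G\|_{L^\infty}\|\nabla\rho\|_{L^q}$ enters.

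\textbf{Step 3: control of $\|\nabla\bmu\|_{L^\infty}$.} Because $\operatorname{curl}\bmu=0$ for radial fields, \eqref{lemma2-5} yields
\begin{equation*}
\|\nabla\bmu\|_{L^\infty}\le C(1+\|G\|_{L^\infty})\log\big(e+\|\nabla G\|_{L^q}+\|\nabla\rho\|_{L^q}\big)+C.
\end{equation*}

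\textbf{Step 4: Gronwall for $\|\nabla\rho\|_{L^q}$ (the main obstacle).} Set $f=e+\|\nabla\rho\|_{L^q}$. Steps 1--3 combine into
\begin{equation*}
\frac{d}{dt}\ln f\le C\big(1+\|G\|_{L^\infty}+\|\nabla G\|_{L^q}\big)\ln\big(e+\|\nabla G\|_{L^q}\big)+C\big(1+\|G\|_{L^\infty}\big)\ln f.
\end{equation*}
This log-Gronwall step is where the argument has to close. I would first try bounding $\|\nabla G\|_{L^q}\ln(e+\|\nabla G\|_{L^q})\le C(e+\|\nabla G\|_{L^q})^{1+1/q}$. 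The coefficients are then integrable in time by \eqref{rhodotuLq-est}, which controls $\int_0^T(\|G\|_{L^\infty}+\|\nabla G\|_{L^q})^{1+1/q}dt$. Gronwall gives $\sup_t\|\nabla\rho\|_{L^q}\le C(T)$, and Step 2 together with \eqref{rhodotuLq-est} then gives $\int_0^T\|\nabla^2\bmu\|_{L^q}^{1+1/q}dt\le C$. Multiplying the estimate of Step 2 by $t$ and using $\int_0^T t\|\nabla G\|_{L^q}^2dt\le C$ from \eqref{rhodotuLq-est} gives $\int_0^T t\|\nabla^2\bmu\|_{L^q}^2dt\le C$. The delicate point here is that $\int_0^T\|G\|_{L^\infty}^2 dt$ is not directly available, so I would use the bounds $\sup_t\|\nabla\rho\|_{L^q}\le C$ and $\int_0^T t\|G\|_{L^\infty}^2dt\le C\int_0^T t(\|\nabla G\|_{L^q}^2+1)dt$, the latter from \eqref{lemma2-4}.

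\textbf{Step 5: remaining second-order bounds.} For $t^{1/2}\|\nabla^2\bmu\|_{L^2}$, use the same elliptic bound in $L^2$. The $\nabla G$ term is handled by \eqref{est_G_H1}, \eqref{est_rhodotu} and \eqref{est_nbdup}, and the $\nabla\rho$ term by $\|\nabla\rho\|_{L^2}\le C\|\nabla\rho\|_{L^q}$. The bounds on $t^{1/2}\|\nabla\bmd_t\|_{L^2}$ and $t^{1/2}\|\nabla^3\bmd\|_{L^2}$ are already in \eqref{est_rhodotu}. For $\nabla^3\bmd$ in $L^q$, apply the Neumann estimate \eqref{elliptic_est2} to $\Delta\bmd=\bmd_t+\bmu\cdot\nabla\bmd-|\nabla\bmd|^2\bmd$. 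This gives
\begin{equation*}
\|\nabla^3\bmd\|_{L^q}\le C\big(\|\nabla\bmd_t\|_{L^q}+\|\nabla\bmu\|_{L^q}+\|\nabla^2\bmd\|_{L^q}+1\big).
\end{equation*}
Then $\|\nabla\bmd_t\|_{L^q}\le C\|\nabla\bmd_t\|_{L^2}^{2/q}\|\nabla^2\bmd_t\|_{L^2}^{1-2/q}+\|\nabla\bmd_t\|_{L^2}$, together with \eqref{est_rhodotu}, \eqref{low-ord-est} and \eqref{est_nbdup}, gives the $L^{1+1/q}$ and $t$-weighted $L^2$ time integrability. Finally, $\bmu_t=\dot{\bmu}-\bmu\cdot\nabla\bmu$ gives $\int_0^T t\|\bmu_t\|_{H^1}^2dt\le C$, using \eqref{rhodotuLq-est}, $\|\bmu\|_{L^\infty}\le C$ and the $t$-weighted bound on $\nabla^2\bmu$.
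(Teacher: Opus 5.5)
Your proposal is correct and follows the paper's proof essentially step by step. The ingredients are the same: the $L^q$ transport estimate for $\nabla\rho$; the elliptic bound $\|\nabla^2\bmu\|_{L^q}\le C\|\nabla\tdiv\bmu\|_{L^q}$ with $\tdiv\bmu=(G+P-\bar P)/(2\mu+\lambda)$; the Beale--Kato--Majda inequality \eqref{lemma2-5} using $\operatorname{curl}\bmu=0$; a logarithmic Gronwall argument closed by \eqref{rhodotuLq-est}; and Neumann-elliptic and interpolation bounds for the remaining terms. The only difference is cosmetic: you keep the Gronwall coefficients in terms of $\|G\|_{L^\infty}$ and $\|\nabla G\|_{L^q}$, whereas the paper converts them to $\|\rho\dot{\bmu}\|_{L^q}$ and $\|\nabla^3\bmd\|_{L^2}$.

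One small point in Step 5 concerns the term $G\nabla\rho$ in the $L^2$ bound for $\nabla^2\bmu$. Estimate it by $\|G\|_{L^{2q/(q-2)}}\|\nabla\rho\|_{L^q}$, with $t^{1/2}\|G\|_{L^{2q/(q-2)}}$ bounded via \eqref{est_G_H1} or \eqref{est_nbdup}, as the paper does. Do not put $G$ in $L^\infty$ there, because $\sup_t t^{1/2}\|G\|_{L^\infty}$ is not directly available.
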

	\begin{proof}
		It follows from $\eqref{CEL-sys}_1$ that 
		\begin{equation*}\label{hdproof_1}
			\begin{aligned}
				(\nabla \rho)_t+(\bmu \cdot \nabla) \nabla \rho +\nabla \bmu \cdot \nabla \rho +\nabla \rho \tdiv\bmu+\rho \nabla \tdiv \bmu=0.  
			\end{aligned}
		\end{equation*}
		For $q>2$, multiplying the above equation by $q |\nabla \rho|^{q-2} \nabla \rho$, and integrating the resultant equation over $\Omega$, one gets from integration by parts that
		\begin{equation*}\label{hdproof_2}
			\begin{aligned}
				\frac{d}{dt} \int |\nabla \rho |^q dx 
				&\le C\int|\nabla \rho|^q |\nabla \bmu|dx +C\int |\nabla \rho|^{q-1} |\nabla \tdiv\bmu|dx\\
				&\le C \slnorm{\nabla \bmu}{\infty} \slnorm{\nabla \rho}{q}^q+C\slnorm{\nabla \tdiv\bmu}{q}\slnorm{\nabla \rho}{q}^{q-1}, 
			\end{aligned}
		\end{equation*}
		which implies that
		\begin{equation}\label{hdproof_3}
			\begin{aligned}
				\frac{d}{dt} \slnorm{\nabla \rho}{q} &\le C \slnorm{\nabla \bmu}{\infty} \slnorm{\nabla \rho}{q}+C\slnorm{\nabla \tdiv\bmu}{q}. 
			\end{aligned}
		\end{equation}
		Next, we get from the standard $ L^p$-estimates for the elliptic system \eqref{ellipest_1} and \eqref{nbdG}, \eqref{rhodotuLq} that
		\begin{equation}\label{hdproof_7}
			\begin{aligned}
				\slnorm{\nabla^2 \bmu}{q} &\le C\slnorm{\nabla \text{div}\bmu}{q}\le C\left\|\nabla\left(\frac{G+P-\bar{P}}{2\mu+\lambda}\right)\right\|_{L^q}\\
				&\le C(\slnorm{\nabla G}{q}+\slnorm{\nabla \rho}{q}+\slnorm{G}{\infty}\slnorm{\nabla\rho}{q})\\
				&\le C(\|\rho\dot{\bmu}\|_{L^q}+\slnorm{\nabla^3 \bmd}{2}^{1-\frac{2}{q}} +1) +C\left(\slnorm{\rho \dot{\bmu}}{q}^{\frac{q}{2q-2}}+\slnorm{\nabla^3 \bmd}{2}^{\frac{q-2}{2q-2}}+1\right) \slnorm{\nabla \rho}{q}\\
				&\le C \slnorm{\nabla \rho}{q}^{\frac{2q-2}{q-2}}+C\|\rho\dot{\bmu}\|_{L^q}+C\slnorm{\nabla^3 \bmd}{2}^{\frac{q-2}{q}}+C, 
			\end{aligned}
		\end{equation}
		which implies that
		\begin{equation}\label{hdproof_11}
				\log(e+\slnorm{\nabla^2 \bmu}{q}) \le C \log (e+\|\rho\dot{\bmu}\|_{L^q}+\slnorm{\nabla^3 \bmd}{2}+\slnorm{\nabla \rho}{q}). 
		\end{equation}
		Thus, by using \eqref{lemma2-5}, \eqref{hdproof_11} and \eqref{hdproof_4}, we have
		\begin{equation}\label{nbduLinf}
			\begin{aligned}
				\slnorm{\nabla \bmu}{\infty} &\leq C\left(\slnorm{\tdiv\bmu}{\infty} \log(e+\slnorm{\nabla^2\bmu}{q})+\slnorm{\nabla \bmu}{2}+1\right)\\
				&\leq C(\slnorm{\rho \dot{\bmu}}{q}^{\frac{q}{2q-2}}+\slnorm{\nabla^3 \bmd}{2}^{\frac{q-2}{2q-2}}+1)\log (e+\|\rho\dot{\bmu}\|_{L^q}+\slnorm{\nabla^3 \bmd}{2}+\slnorm{\nabla \rho}{q})\\
				&\leq C(\slnorm{\rho \dot{\bmu}}{q}^{\frac{q}{2q-2}}+\slnorm{\nabla^3 \bmd}{2}^{\frac{q-2}{2q-2}}+1)\log [(e+\|\rho\dot{\bmu}\|_{L^q}+\slnorm{\nabla^3 \bmd}{2})(e+\slnorm{\nabla \rho}{q})]\\
				&\leq C(1+\slnorm{\rho \dot{\bmu}}{q}+\slnorm{\nabla^3 \bmd}{2})\log(e+\|\nabla\rho\|_{L^q}).
			\end{aligned}
		\end{equation}
		From \eqref{hdproof_7}, we also have 
		\begin{equation}\label{hdproof_12}
			\slnorm{\nabla \text{div} \bmu}{q}\le C(1+\|\nabla G\|_{L^q}+\|G\|_{L^\infty})(e+\slnorm{\nabla\rho}{q}). 
		\end{equation}
		Substituting \eqref{nbduLinf} and \eqref{hdproof_12} into \eqref{hdproof_3} and using Gronwall's inequality leads to
		\begin{equation}
			\sup_{0 \le t \le T}\slnorm{\nabla \rho(t)}{q} \le C(T),
		\end{equation}
	which together with \eqref{hdproof_7} and \eqref{rhodotuLq-est} shows that
	\begin{equation}
		\int_{0}^{T}\|\nabla^2\bmu\|_{L^q}^{1+\frac{1}{q}}+t\|\nabla^2\bmu\|_{L^q}^2 dt\leq C(T).
	\end{equation}
Moreover, similar to \eqref{hdproof_7}, one has
\begin{equation}
	\begin{aligned}
		\slnorm{\nabla^2 \bmu}{2} &\le C\slnorm{\nabla \text{div}\bmu}{2}\le C\left\|\nabla\left(\frac{G+P-\bar{P}}{2\mu+\lambda}\right)\right\|_{L^2}\\
		&\leq C(\|\nabla\rho\|_{L^2}+\|\nabla G\|_{L^2}+\|G\|_{L^\frac{2q}{q-2}}\|\nabla\rho\|_{L^q})\\
		&\leq C(T)(\|\nabla G\|_{L^2}+1)\\
		&\leq C(T)(\|\rho\dot{\bmu}\|_{L^2}+\|\nabla\bmd\|_{L^\infty}^2\|\nabla^2\bmd\|_{L^2}),
	\end{aligned}
\end{equation}
which combined with \eqref{est_rhodotu} leads to
\begin{equation}\label{nabula2_u_control}
	\sup_{0\le t \le T}t\|\nabla^2\bmu\|_{L^2}^2\leq C(T).
\end{equation}
Combing \eqref{nabula2_u_control} with \eqref{Poincare-in} and \eqref{low-ord-est} yields that
	\begin{equation}
		\begin{aligned}
		\int_{0}^{T} t \|\bmu_t\|_{H^1}^2dt &\leq C\int_{0}^{T}t (\|\sqrt{\rho}\bmu_t\|_{L^2}^2+\|\nabla \bmu_t\|_{L^2}^2)dt\\
		&\leq C\int_{0}^{T} t (\|\sqrt{\rho}\dot{\bmu}\|_{L^2}^2+\|\bmu\|_{L^\infty}^2\|\nabla\bmu\|_{L^2}^2+\|\nabla \dot{\bmu}\|_{L^2}^2+\|\nabla\bmu\|_{L^4}^4+\|\bmu\|_{L^\infty}^2\|\nabla^2\bmu\|_{L^2}^2)dt\\
		&\leq C\int_{0}^{T} t \|\sqrt{\rho}\dot{\bmu}\|_{L^2}^2+t \|\nabla \dot{\bmu}\|_{L^2}^2+t \|\nabla^2\bmu\|_{L^2}^2dt\leq C(T).
		\end{aligned}
	\end{equation}
For the time-weighted estimates of $\nabla^3\bmd$ and $\nabla\bmd_t$, it is obvious from \eqref{est_rhodotu} and \eqref{rho_upperbdd} that
\begin{equation}\label{tnbd2dt}
	\sup_{0\le t \le T}(t\slnorm{\nabla \bmd_t}{2}^2+t\|\nabla^3\bmd\|_{L^2}^2)+ \int_0^T t \slnorm{\nabla^2\bmd_t }{2}^2 dt \le C(T).
\end{equation}
Using standard elliptic estimates \eqref{elliptic_est2} and \eqref{low-ord-est} lead to
\begin{align*}
		\|\nabla^3\bmd\|_{L^q}&\leq C(\|\bmd_t\|_{W^{1,q}}+\|\bmu\cdot\nabla\bmd\|_{W^{1,q}}+\||\nabla\bmd|^2\bmd\|_{W^{1,q}})\\
		&\leq C\|\bmd_t\|_{W^{1,q}}+C(\|\bmu\|_{L^\infty}+\|\nabla\bmd\|_{L^\infty})\|\nabla\bmd\|_{W^{1,q}}\\
		&\quad+C(\|\nabla\bmu\|_{L^q}\|\nabla\bmd\|_{L^\infty}+\|\nabla\bmd\|_{L^q}\|\nabla\bmd\|_{L^\infty}^2)\\
		&\leq C\|\bmd_t\|_{W^{1,q}}+C\|\nabla\bmd\|_{H^2}+C\|\nabla\bmu\|_{H^1},
\end{align*}
which together with \eqref{lemma2-3}, \eqref{low-ord-est}, \eqref{nbdt-est} and \eqref{tnbd2dt} yields that
\begin{equation}
	\begin{aligned}
		\int_{0}^{T}\|\nabla^3\bmd\|_{L^q}^{1+\frac{1}{q}}&\leq C\int_{0}^{T}\|\nabla\bmd_t\|_{L^q}^{1+\frac{1}{q}}dt+C\int_{0}^{T}\|\bmd_t\|_{H^1}^2+\|\nabla\bmd\|_{H^2}^2+\|\nabla\bmu\|_{H^1}^2dt\\
		&\leq C\int_{0}^{T}\left(\|\nabla\bmd_t\|_{L^2}^\frac{2}{q}\|\nabla^2\bmd_t\|_{L^2}^{1-\frac{2}{q}}\right)^{1+\frac{1}{q}}dt+C\\
		&\leq C\int_{0}^{T}t\|\nabla^2\bmd_t\|_{L^2}^2dt+C\int_{0}^{T}t^{-1+\frac{2}{q^2+q+2}}+C\leq C,
	\end{aligned}
\end{equation}
and 
\begin{equation}
	\int_{0}^{T}t\|\nabla^3\bmd\|_{L^q}^2\leq C\int_{0}^{T}t(\|\bmd_t\|_{H^2}^2+\|\nabla\bmd\|_{H^2}^2+\|\nabla\bmu\|_{H^1}^2)dt\leq C.
\end{equation}
Therefore, the proof of lemma is completed.
\end{proof}

\section{Proofs of Theorem \ref{main-thm}}

\subsection{Global existence of strong solutions}

With all the \textit{a-priori} estimate at hand, we can prove global existence and uniqueness of strong solution in Theorem \ref{main-thm} by a standard argument as in \cite{HuangXDLiJ2016, Zhongxin2026}.  We sketch the proof as follows. First, for $\delta \ge 0$, we construct
\begin{equation*}
    \begin{aligned}
        &\rho_0^{\delta}=j_{\delta}*\rho_0 +\delta \ge \delta > 0\\
        &\bmu_0^{\delta}=j_{\delta}*\bmu_0\\
        &\bmd_0^{\delta}=j_{\delta}*\bmd_0\\
    \end{aligned}
\end{equation*}
where $j_{\delta}$ is the standard mollifying kernel of width $\delta$. Hence we have $(\rho_0^{\delta}, \bmu_0^{\delta}, \bmd_0^{\delta}) \in H^{\infty}$, and 
\begin{equation*}
    \begin{aligned}
        \lim_{\delta \rightarrow{0}} (\Vert \rho_0^{\delta}-\rho_0 \Vert_{W^{1, q}} +(\Vert \bmu_0^{\delta}-\bmu_0 \Vert_{H^{1}} +(\Vert  \nabla \bmd_0^{\delta}-\nabla \bmd_0 \Vert_{H^{1}})=0, 
    \end{aligned}
\end{equation*}
According to Lemma \ref{lwp} , there exist a $\tilde{T} >0$ such that the simplified compressible \textit{Ericksen-Leslie} system \eqref{CEL-sys}-\eqref{BC} has a unique local strong solution $(\rho^{\delta}, \bmu^{\delta}, \bmd^{\delta})$ on $[0, \tilde{T}]$ with initial data $(\rho_0^{\delta}, \bmu_0^{\delta}, \bmd_0^{\delta})$. And we define 
\begin{equation*}
    \begin{aligned}
        T^{*}=\sup \left\{T| \sup_{0\le t \le T}\hnorm{(\rho^{\delta}, \bmu^{\delta}, \nabla \bmd^{\delta})}{2}{\Omega} < \infty \right\}. 
    \end{aligned}
\end{equation*}
and define up to a time  $T_0$ that  solution $(\rho^{\delta}, \bmu^{\delta}, \bmd^{\delta}) $ satisfying the condition needed in  Lemma \ref{lwp}. 
Due to \eqref{est_AB}, \eqref{higerderiv}, we have $T^{*} = T_0$. 
If $T^{*} < \infty$,  for all $0 < T<T^*$, we have 
$$\sup_{0\le t\le T} \hnorm{\rho^{\delta}}{2}{\Omega} \le C^* $$ 
by \eqref{higerderiv} and standard argument in \cite{HuangXDLiJ2016}. This combine with Lemma \ref{lwp} contradicts the definition of $T^*$. Thus we have $$T^*= \infty.$$
So $(\rho^{\delta}, \bmu^{\delta}, \bmd^{\delta})$ exists on $\Omega \times [0, \infty)$ and satisfying all estimates in \eqref{TH1. 1-4}. Then letting $\delta \rightarrow{0} $ we proof that$(\rho, \bmu, \bmd) $ is global solution of system  \eqref{CEL-sys}-\eqref{symmetric assumption}. 

\subsection{Uniform upper bound of density and large time behaviors of strong solutions}

	The uniform estimates of density is a direct consequence of proposition \ref{time-independent upperbdd_rho} and compactness. It only remains to prove the large time behaviors \eqref{large-time behavior}. 
	
	First, we can follow the similar arguments in \cite{HuangXDLiJ2016} to derive
	\begin{equation}\label{rho_converge}
		\begin{aligned}
			\lim_{t \rightarrow +\infty} \slnorm{\rho-\bar{\rho}_0}{p}^2 (t) =0, 
		\end{aligned}
	\end{equation}
	for any $p \in [1, \infty)$. In fact,
	it follows from \eqref{G-def}, \eqref{B-E-estimate},  \eqref{est_AB} , \eqref{est_G_H1}, and \eqref{rho_upperbdd} that
	\begin{equation*}
		\begin{aligned}
			\int_{0}^{T} \slnorm{P(\rho)-\bar{P}(\rho)}{2}^2 dt \leq C \int_{0}^{T} \|G\|_{L^2}^2 + \slnorm{\sqrt{2\mu+\lambda} \tdiv\bmu}{2}^2 dt \leq C, 
		\end{aligned}
	\end{equation*}
with $C$ being independent of $T$. Thus, we have
	\begin{equation}\label{time-integrate-sup}
			\int_1^{\infty} \slnorm{P(\rho)-\bar{P}(\rho)}{2}^2 +\slnorm{G}{2}^2 dt \leq C 
	\end{equation}
	 Similar arguments as in \cite{HuangXDLiJ2016} lead to
	 \begin{equation}
	 	\sup_{N\leq t \leq N+1}\slnorm{P(\rho)-\bar{P}(\rho)}{2}^2(t)\leq C\int_{N}^{N+1}\left(\slnorm{P(\rho)-\bar{P}(\rho)}{2}^2+\|G\|_{L^2}^2\right)dt,
	 \end{equation}
	 which holds for any $N\in \mathbb{N}$. Letting $N\rightarrow+\infty$, this combined with \eqref{time-integrate-sup} yields \eqref{rho_converge}.

	Next, in view of \eqref{est_d1_proof_1}, one gets from \eqref{low-ord-est} that
	\begin{equation*}
		\begin{aligned}
			\frac{d}{dt}\slnorm{\nabla \bmd(t)}{4}^4&\leq C\|\nabla\bmd(t)\|_{L^\infty}^2\|\nabla^2\bmd(t)\|_{L^2}^2+C(\|\bmu(t)\|_{L^\infty}^2+\|\nabla\bmd(t)\|_{L^\infty}^2)\|\nabla\bmd(t)\|_{L^2}^2\|\nabla\bmd(t)\|_{L^\infty}^2\\
			&\leq C(\|\nabla^2\bmd(t)\|_{L^2}^2+\|\nabla\bmd(t)\|_{L^\infty}^2)
		\end{aligned}
	\end{equation*}
	which yields that, for any $s,t\in[N,N+1]$,
	\begin{equation}
		\slnorm{\nabla \bmd(t)}{4}^4-\slnorm{\nabla \bmd(s)}{4}^4\leq C\int_{N}^{N+1}(\|\nabla^2\bmd(t)\|_{L^2}^2+\|\nabla\bmd(t)\|_{L^\infty}^2)dt
	\end{equation}
	Integrating again with respect to $s$ over $(N,N+1)$ and using \eqref{low-ord-est}, \eqref{rho_upperbdd} yield that
	\begin{align*}
		\sup_{N\leq t\leq N+1}\slnorm{\nabla \bmd(t)}{4}^4&\leq C\int_{N}^{N+1}\slnorm{\nabla \bmd(s)}{4}^4ds+C\int_{N}^{N+1}(\|\nabla^2\bmd(t)\|_{L^2}^2+\|\nabla\bmd(t)\|_{L^\infty}^2)dt\\
		&\leq C\int_{N}^{N+1}(\|\nabla^2\bmd(t)\|_{L^2}^2+\|\nabla\bmd(t)\|_{L^\infty}^2)dt
	\end{align*} 
	From \eqref{d-est2}, one gets 
	\begin{equation*}
		\int_{1}^{\infty}(\|\nabla^2\bmd(t)\|_{L^2}^2+\|\nabla\bmd(t)\|_{L^\infty}^2)dt\leq C
	\end{equation*}
	Letting $N\rightarrow+\infty$ yields that
	\begin{equation}\label{L4-nbdd-conv}
		\lim\limits_{t\rightarrow+\infty }\|\nabla\bmd(t)\|_{L^4}^4=0.
	\end{equation}
	
	Finally, it follows from \eqref{estAB_proof_6} , \eqref{est_AB_another} and  \eqref{rho_upperbdd} that, given any $T>0$,
	\begin{equation}\label{A_I_0_ineq}
			\frac{1}{2}\frac{d}{dt}A^2(t)\le \frac{d}{dt}I_0 + C(\slnorm{\sqrt{2\mu+\lambda} \tdiv\bmu}{2}^2+\|\nabla^2\bmd\|_{L^2}^2)
	\end{equation}
	holds for any $t\in(0,T)$ with $C$ independent of $T$. Using the defination of $I_0$ in \eqref{estAB_proof_1} and \eqref{idd} lead to
	\begin{equation} \label{I_0_t-A_t}
		\begin{aligned}
			|I_0(t)|& \leq C \slnorm{\nabla \bmd(t)}{4}^2 \slnorm{\tdiv \bmu(t)}{2}\\
			&\leq \frac{\mu}{4}\slnorm{\tdiv \bmu(t)}{2}^2 +C\slnorm{\nabla \bmd(t)}{4}^4\\
			&\leq\frac{1}{4}A^2(t)+ C\slnorm{P-\bar{P}}{2}(t)+C\slnorm{\nabla \bmd(t)}{4}^4, 
		\end{aligned}
	\end{equation}
	Then, for any $s, t\in [N, N+1]$ with $N \in \mathbb{N}$ and $s<t$, we can obtain from integrating  \eqref{A_I_0_ineq} respect to $t$ over $(s, t)$ and using \eqref{I_0_t-A_t} that
	\begin{equation*}
		\begin{aligned}
			\frac{1}{2}A^2(t) &\leq \frac{1}{2}A^2(s) + I_0(t)-I_0(s) + C\int_{N}^{N+1} (\slnorm{\sqrt{2\mu+\lambda} \tdiv\bmu}{2}^2+\|\nabla^2\bmd\|_{L^2}^2)dt \\
			&\leq \frac{1}{4}A^2(t)+ C\slnorm{P-\bar{P}}{2}(t)+C\slnorm{\nabla \bmd(t)}{4}^4+\frac{1}{2}A^2(s) +C\slnorm{\nabla \bmd}{\infty}^2(s)\\
			&\quad+ C\int_{N}^{N+1}(\slnorm{\sqrt{2\mu+\lambda} \tdiv\bmu}{2}^2+\|\nabla^2\bmd\|_{L^2}^2) dt,
		\end{aligned}
	\end{equation*}
	where we have used
	\begin{align*}
		|I_0(s)|\leq \|\nabla\bmd(s)\|_{L^2}^2\|\tdiv\bmu(s)\|_{L^2}\|\nabla\bmd(s)\|_{L^\infty}^2\leq C\|\nabla\bmd(s)\|_{L^\infty}^2
	\end{align*}
due to \eqref{B-E-estimate}, \eqref{rho_upperbdd} and the last inequality in \eqref{I_0-est}. Integrating respect to $s$ over $(N, N+1)$ yields that
\begin{equation*}
	\begin{aligned}
		\frac{1}{4}A^2(t)  &\leq C\slnorm{P-\bar{P}}{2}(t)+C\|\nabla\bmd(t)\|_{L^4}^4\\
		&\quad+ C \int_{N}^{N+1} A^2(s) +\slnorm{\nabla \bmd}{\infty}^2(s) + \slnorm{\sqrt{2\mu+\lambda} \tdiv\bmu}{2}^2(s)+\|\nabla^2\bmd\|_{L^2}^2(s) ds. 
	\end{aligned}
\end{equation*}
Letting $N\rightarrow+\infty$ and using \eqref{rho_converge}, \eqref{L4-nbdd-conv}, \eqref{A-def}, \eqref{B-E-estimate} and \eqref{d-est2} gives that
\begin{equation*}
		\lim_{t \rightarrow +\infty} A^2(t) =0
\end{equation*}
which leads to
\begin{equation}\label{nbdu-conv}
		\lim_{t \rightarrow +\infty}\slnorm{\nabla \bmu}{2}^2\leq \lim_{t \rightarrow +\infty}(\slnorm{G}{2}^2 +\slnorm{P(\rho)-\bar{P}(\rho)}{2}^2  )=0, 
\end{equation}
and
\begin{equation}\label{2nbdd-conv}
	\lim_{t \rightarrow +\infty}\slnorm{\nabla^2 \bmd}{2}^2 \leq \lim_{t \rightarrow +\infty} A^2(t) =0. 
\end{equation}
Note from \eqref{est_nbdup} and \eqref{rho_upperbdd} that, for any $t\geq 1$
\begin{equation*}
	\|\nabla\bmu(t)\|_{L^p}+\|\nabla^2\bmd\|_{L^p}\leq C,
\end{equation*}
where $C$ is independent of $t$. 
Therefore, for any $p>2$, we get form H\"{o}lder's inequality, \eqref{nbdu-conv} and \eqref{2nbdd-conv} that
\begin{equation*}
		\lim_{t \rightarrow +\infty}\left(\slnorm{\nabla \bmu}{p}+\slnorm{\nabla^2 \bmd}{p}\right) \leq \lim_{t \rightarrow + \infty}\left(\slnorm{\nabla \bmu}{2}^{\frac{1}{p}}\slnorm{\nabla \bmu}{2(p-1)}^{\frac{p-1}{p}}+\slnorm{\nabla^2 \bmd}{2}^{\frac{1}{p}}\slnorm{\nabla^2 \bmd}{2(p-1)}^{\frac{p-1}{p}}\right)=0,
\end{equation*}
which complete the proof. 

\bigskip

\noindent {\bf Acknowledgments:}
 The research of Yu Mei is supported by the National Natural Science Foundation of China No. 12571241 and No. 12371227.	

	\bibliographystyle{plain}
	\bibliography{ref}

\end{document}